\documentclass[a4paper,11pt]{amsart} % poner 'twoside' dentro de los [] si imprimo a doble cara.
\usepackage[margin=25mm]{geometry}
\usepackage[english,activeacute]{babel} %cambiar a 'english' para que todos los titulos sean en inglés. NOTA: el cambio puede tirar warnings. Basta compilar de nuevo y listo.
\usepackage{color}
\usepackage{graphics}
\usepackage{graphicx}
\usepackage{hyperref}
\usepackage{longtable}
\usepackage{amsmath}
\usepackage{epigraph}
\usepackage{indentfirst}
\usepackage{verbatim}
\usepackage{array}
\usepackage{amssymb}
\usepackage{subfigure}
\usepackage{fancyhdr}
\usepackage{enumitem}
\DeclareMathAlphabet{\mathpzc}{OT1}{pzc}{m}{it} %\mathpzc{N}
\usepackage{mathrsfs}
\usepackage[small,bf]{caption}
\usepackage{cancel}
\usepackage{braket} %For quantum mechanics
\usepackage{mathrsfs}  %for mathscr
\usepackage{epstopdf}
\usepackage{amsthm}
\usepackage{dsfont}
\usepackage{mathtools} % para el declarepaireddelimiter
\usepackage{cancel}
%\usepackage[nottoc]{tocbibind} %para que la bibliografia aparezca en el table of contents
%

%

%Cuts equations automatically
\allowdisplaybreaks

\newtheorem{theorem}{Theorem}[section]
\newtheorem{corollary}[theorem]{Corollary}
\newtheorem{lemma}[theorem]{Lemma}
\newtheorem{proposition}[theorem]{Proposition}
\theoremstyle{definition}

\newtheorem{remark}{Remark}

%GENERAL COMMANDS
\newcommand{\R}{\mathbb{R}}
\newcommand{\C}{\mathbb{C}}

\newcommand{\T}{\mathbb{T}}
%PATHS
\newcommand{\walk}{\mathsf{s}}
\newcommand{\walks}{\mathsf{S}}

%POLYMERS
\newcommand{\pf}{\mathsf{Z}}
\newcommand{\mart}{\mathsf{M}}
\newcommand{\martx}{\mathsf{X}}

\newcommand{\eterm}{\mathsf{E}}
%FREE ENERGY
\newcommand{\free}{\mathsf{f}}
\newcommand{\amin}{\alpha_{\textsc{min}}}
\newcommand{\fone}{f_{\textsc{I}}}
\newcommand{\ftwo}{f_{\textsc{II}}}
\newcommand{\fthree}{f_{\textsc{III}}}
\newcommand{\bigfree}{\textsf{F}}
%ENVIRONMENT
\newcommand{\esp}{\mathbb{E}}
\newcommand{\p}{\mathbb{P}}

\newcommand{\calf}{\mathcal{F}}

\newcommand{\mom}{\mathsf{m}}
\newcommand{\bmom}{\widetilde{\mom}}
\newcommand{\xibg}{\xi_{\beta,\gamma}}
\newcommand{\lrad}{\lambda_{\R}}
\newcommand{\lph}{\lambda_{\C}}
%MISC

\setcounter{tocdepth}{2}
%%%%%
\linespread{1.5}
\pagestyle{fancy}
\fancyhead[l]{}
\setlength{\headheight}{13.6pt}

\title[Directed Polymers in Complex-Valued Environments on Trees]{Directed Polymers in Complex-Valued Random Environments on the Tree}
\author[Leonardo Medina-Espinosa and Gregorio R. Moreno Flores]{Leonardo Medina-Espinosa$^1$ and Gregorio R. Moreno Flores$^{2}$}

\address{Facultad de Matem\'aticas\\
Pontificia Universidad Cat\'olica de Chile\\
Vicu\~na Mackenna 4860, Macul\\
Santiago, Chile}

\email{lpmedina@mat.uc.cl, grmoreno@mat.uc.cl}

\thanks{$^{1,2}$ Facultad de Matem\'aticas, Pontificia Universidad Cat\'olica de Chile.}

%\thanks{$^{1}$  Partially supported by Fondecyt grant 1171257}

\thanks{$^{1,2}$  Partially supported by Fondecyt grant 1211189}

%%%%%%%%%%%%%%%%%%%%%%%%%%%%%%%%%%%%%%%%%%%%%%%%%%%%%%%%%%%%%%%
%%%%%%%%%%%%%%%%%%%%%%%%%%%%%%%%%%%%%%%%%%%%%%%%%%%%%%%%%%%%%%%
%%%%%%%%%%%%%%%%%%%%%%%%%%%%%%%%%%%%%%%%%%%%%%%%%%%%%%%%%%%%%%%
%%%%%%%%%%%%%%%%%%%%%%%%%%%%%%%%%%%%%%%%%%%%%%%%%%%%%%%%%%%%%%%
%%%%%%%%%%%%%%%%%%%%%%%%%%%%%%%%%%%%%%%%%%%%%%%%%%%%%%%%%%%%%%%
%%%%%%%%%%%%%%%%%%%%%%%%%%%%%%%%%%%%%%%%%%%%%%%%%%%%%%%%%%%%%%%
%%%%%%%%%%%%%%%%%%%%%%%%%%%%%%%%%%%%%%%%%%%%%%%%%%%%%%%%%%%%%%%
%%%%%%%%%%%%%%%%%%%%%%%%%%%%%%%%%%%%%%%%%%%%%%%%%%%%%%%%%%%%%%%

\begin{document}

\begin{abstract}
	We consider a model of directed polymers on regular trees with complex-valued random weights introduced by Cook and Derrida \cite{CD-Complex} and studied mathematically by Derrida, Evans and Speer \cite{DES-Complex}. In addition to the usual weak-disorder and strong-disorder regimes, the phase diagram of the model contains a third region due to the effects of the random phases that cannot be observed in the model with positive weights.
	
	In this work, we extend the results of \cite{DES-Complex} in two directions. First, we remove the hypothesis on the independence of the random radii and random phases of the environment from most of the phase diagram, and second, under mild assumptions on the law of the environment, we strengthen the convergence of the free energy (shown to hold in probability in \cite{DES-Complex}) to an almost sure convergence. Along the way, many of the arguments of \cite{DES-Complex} are simplified.
\end{abstract}

\maketitle

\setcounter{tocdepth}{1}
\tableofcontents

%%%%%%%%%%%%%%%%%%%%%%%%%%%%%%%%%%%%%%%%%%%%%%%%%%%%%%%%%%%%%%%
%%%%%%%%%%%%%%%%%%%%%%%%%%%%%%%%%%%%%%%%%%%%%%%%%%%%%%%%%%%%%%%
%%%%%%%%%%%%%%%%%%%%%%%%%%%%%%%%%%%%%%%%%%%%%%%%%%%%%%%%%%%%%%%
%%%%%%%%%%%%%%%%%%%%%%%%%%%%%%%%%%%%%%%%%%%%%%%%%%%%%%%%%%%%%%%
%%%%%%%%%%%%%%%%%%%%%%%%%%%%%%%%%%%%%%%%%%%%%%%%%%%%%%%%%%%%%%%
%%%%%%%%%%%%%%%%%%%%%%%%%%%%%%%%%%%%%%%%%%%%%%%%%%%%%%%%%%%%%%%
%%%%%%%%%%%%%%%%%%%%%%%%%%%%%%%%%%%%%%%%%%%%%%%%%%%%%%%%%%%%%%%
%%%%%%%%%%%%%%%%%%%%%%%%%%%%%%%%%%%%%%%%%%%%%%%%%%%%%%%%%%%%%%%

\section{Model and main results}

%%%%%%%%%%%%%%%%%%%%%%%%%%%%%%%%%%%%%%%%%%%%%%%%%%%%%%%%%%%%%%%
%%%%%%%%%%%%%%%%%%%%%%%%%%%%%%%%%%%%%%%%%%%%%%%%%%%%%%%%%%%%%%%
%%%%%%%%%%%%%%%%%%%%%%%%%%%%%%%%%%%%%%%%%%%%%%%%%%%%%%%%%%%%%%%
%%%%%%%%%%%%%%%%%%%%%%%%%%%%%%%%%%%%%%%%%%%%%%%%%%%%%%%%%%%%%%%
Let $\left( \T, \mathcal{E} \right)$ be a $b$-ary tree, where \( \T \) denotes the set of nodes or sites, and \(b>1\). For $n\geq 0$, let $\T_n$ denote the set of all sites in generation $n$, i.e. those that are at graph distance $n$ from the root. Let $\walks_n$ be the set of nearest-neighbour directed paths of length $n$ i.e. if $\walk\in\walks_n$, then, $\walk = \left( \walk_0,\walk_1,\cdots,\walk_n \right)$ with $\walk_j \in \T_j$, $j=0,\cdots,n$ and $\displaystyle \left| \walk_j-\walk_{j-1} \right|=1$ for each $j=1,\cdots, n$.

Let $\{\xi(x): \ x\in\T\}$ be a family of complex-valued i.i.d. random variables defined on some probability space $(\Omega,\calf,\p)$ such that
\begin{eqnarray}
\label{eq:finite-moments}
	\esp
	\left[ \left| \xi \right|^{\alpha} \right]
	< \infty,
\end{eqnarray}
for all $\alpha>0$, where $\esp$ denotes the expected value with respect to $\p$. With a slight abuse of notation, we denote $\xi=\{\xi(x): \ x\in\T\}$. To be more precise, we may take $\Omega=\C^{\T}$, $\calf$ as the Borel $\sigma$-algebra on $\Omega$, $\p = \textsf{P}_0^{\otimes \T}$ for some probability measure $\textsf{P}_0$ defined the Borel $\sigma$-algebra of $\C$ and take $\xi$ as the canonical process. In some cases, it will be convenient to write $\xi(x)=e^{\omega(x)+\iota \theta(x)}$ and denote  $\omega=\{\omega(x):\, x\in\T\}$ and $\theta=\{\theta(x):\, x\in\T\}$.

For each realization of the environment $\xi$, we define the partition function $\pf_n(\xi)$ as
\begin{eqnarray}
\label{eq:partition-function}
	\pf_n(\xi)
	=
	\sum_{\walk\in\walks_n}
	\prod^n_{t=1} \xi(\walk_t).
\end{eqnarray}
Cook and Derrida introduced this model in \cite{CD-Complex} in relation with the Lyapunov exponents of large sparse random matrices. It was later solved on mathematical grounds by Derrida, Evans and Speer \cite{DES-Complex}. It can be seen as a generalization of directed polymers with positive random weights, introduced in the eighties as a model of phase separation lines in materials with impurities \cite{HH}. In the words of the authors of \cite{DES-Complex}: ``This generalization seems reasonable as a model for the hopping conductivity of strongly localized electrons, since the transmission of such electrons is dominated by directed paths,, and interference effects are produced when the contributions of the individual paths are added." In fact, except for a normalization, the partition function \eqref{eq:partition-function} can be understood as a path integral for the propagator of a mean-field discrete unitary Anderson model in a space-time random environment.

Our comprehension of directed polymers with positive weights (on the lattice or on the three) has made considerable progresses since their introduction. Their mathematical study can traced back to \cite{IS}, and a complete account of the principal works on the subject up to 2016 can be found in the book \cite{C-SF}. In this case, it is common to write the environment as $\xi=e^{\beta \omega}$, where $\beta>0$ represents the inverse temperature of the system. The partition function then corresponds to the normalizing constant of a random measure on paths known as the polymer measure. It is known that there is a phase transition: for small values of $\beta$, the polymer paths are diffusive while, for large values of $\beta$, the paths localize near energetically favorable regions (see \cite{Ch} for a recent work on localization). These two antagonistic phases are known as the weak-disorder and strong-disorder regimes. The critical value of $\beta$ is unknown in general, except for dimensions one and two, where it is equal to zero \cite{CV, L}, and on the tree, where it can be given explicitly as a function of the law of the environment and the branching number, along with the precise value of the free energy \cite{KP,BPP}. We recall this fact in Section \ref{sec:comparison}.

For complex weights, the authors of \cite{CD-Complex} conjectured the existence of an additional regime dominated by interferences coming from the random phases, which cannot be observed in the model with positive weights. This was later confirmed in \cite{DES-Complex}, where the free energy was computed explicitly, and the three regions were identified. The authors observe that this phase diagram does not coincide with other predictions from the physics literature, for instance, \cite{GB}, where, based on replica methods, a phase diagram consisting of five different regions was obtained, although not in the mean-field approximation. This raises the question of whether the model on the lattice could display a richer phase diagram, unlike the model with positive weights, where the lattice and tree cases share the same phase transition. We refer the reader to \cite{CD-Complex, DES-Complex} and references therein for a deeper review of the physics literature.

Much like the usual directed polymers on the tree, we note that the model can be interpreted as a multiplicative cascade inducing a (complex-valued) measure on the interval \cite{Barral1,Barral2,Barral3}. Furthermore, a phase diagram with three similar regions can be found in related models such as complex multiplicative chaos \cite{LRV-Complex, Lacoin1, Lacoin2} (see also \cite{HK-BBM} and references therein).

\vspace{1ex}

In this work, we partially extend the results of \cite{DES-Complex} in two different directions. 
First, the authors of \cite{DES-Complex} assume that the random phases and radii are independent, i.e., $\xi(x)=e^{\omega(x)+\iota \theta(x)}$, where the families of random variables $\{\omega(x):\, x\in\mathbb{T}\}$ and $\{\theta(x):\, x\in\mathbb{T}\}$ are independent. We remove this restriction from most of the phase diagram, except from a piece of the strong disorder region where the model with weights $|\xi(x)|$ is in the weak disorder regime but where sufficiently disordered random phases induce a phase transition. 
Second, under mild assumptions on the law of the environment, we show that the convergence to the free energy, which was shown to hold in probability in \cite{DES-Complex}, can be strengthened to almost sure convergence. 
We also show that the standard martingale techniques used for positive weights can still be applied in the weak disorder region, yielding a straightforward computation of the free energy and almost sure convergence without additional hypotheses beyond those of \cite{DES-Complex}. In order to keep the article self-contained, several proofs from \cite{DES-Complex} are reproduced, often with significant simplifications. Proposition \ref{thm:joint-convergence} below clarifies the main elements of our version of the scheme of proof used in \cite{DES-Complex}.

\vspace{1ex}

Finally, we recall that the work \cite{MO} characterizes the localization properties of directed polymers with positive weights on the tree. There, it is shown that, in the weak disorder region, the partition function is supported on a sub-tree whose growth rate is computed explicitly, decreases with $\beta$, and reduces to a single path in the strong disorder region. Obtaining analogous results in our case is a very natural question. In particular, we would expect that the random phases strengthen the localization in the weak disorder regime, up to the boundary of the region. We should also observe localization effects caused by interferences in the new region of the phase diagram. Recalling the relations with the Anderson model, it would be interesting to determine if this behavior shares similarities with Anderson's localization. We defer these questions to future work.
%We further comment on this question in Remark \ref{rk:loc} and defer a deeper analysis to future work.

We now formulate the main results of this work. We first state the convergence to the free energy in a general setting, then specifically for the case of independent phases and radii, where a clearer picture of the phase diagram can be given. Then, we recall the results for the model with positive weights and relate them to the polymer with weights $|\xi(x)|$.

%%%%%%%%%%%%%%%%%%%%%%%%%%%%%%%%%%%%%%%%%%%%%%%%%%%%%%%%%%%%%%%
%%%%%%%%%%%%%%%%%%%%%%%%%%%%%%%%%%%%%%%%%%%%%%%%%%%%%%%%%%%%%%%
%%%%%%%%%%%%%%%%%%%%%%%%%%%%%%%%%%%%%%%%%%%%%%%%%%%%%%%%%%%%%%%
%%%%%%%%%%%%%%%%%%%%%%%%%%%%%%%%%%%%%%%%%%%%%%%%%%%%%%%%%%%%%%%

\subsection{The free energy}

%%%%%%%%%%%%%%%%%%%%%%%%%%%%%%%%%%%%%%%%%%%%%%%%%%%%%%%%%%%%%%%
%%%%%%%%%%%%%%%%%%%%%%%%%%%%%%%%%%%%%%%%%%%%%%%%%%%%%%%%%%%%%%%
%%%%%%%%%%%%%%%%%%%%%%%%%%%%%%%%%%%%%%%%%%%%%%%%%%%%%%%%%%%%%%%
%%%%%%%%%%%%%%%%%%%%%%%%%%%%%%%%%%%%%%%%%%%%%%%%%%%%%%%%%%%%%%%
In the rest of this work, the hypothesis \eqref{eq:finite-moments} is always assumed to hold.
Let
\begin{eqnarray*}
	G(\alpha) = \frac{\ln b + \ln \esp \left[ \left| \xi \right|^{\alpha} \right]}{\alpha},
	\qquad
	\alpha \geq 0.
\end{eqnarray*}
Let $\amin$ be the unique minimizer of $G$ if it exists and $\amin=\infty$ otherwise. It is easy to show that $G$ is strictly decreasing in $(0,\amin]$ and strictly increasing on $[\amin,\infty)$ (provided the later interval is meaningful).

Our hypothesis on the environment is slightly more general than the one from \cite{DES-Complex}. We say that $\mathsf{P}_0$ is \textit{uniformly continuous} if, for all $\varepsilon>0$, there exists $\delta>0$ such that
\begin{eqnarray*}
	\mathsf{P}_0
	\left[ \xi \in B(z;\delta) \right] \leq  \varepsilon,
\end{eqnarray*}
for all $z\in \mathbb{C}$, where $B(z;\delta) \subset \mathbb{C}$ denotes the ball with radius $\delta$ centered in $z$. 

The following is our main result, which extends Theorems 6.5 and 7.4 from \cite{DES-Complex}.
%%%%%%%%%%%%%%%%%%%%%%%%%%%%%%%%%%%%%%%%%%%%%%%%%%%%%%%%%%%%%%%
%%%%%%%%%%%%%%%%%%%%%%%%%%%%%%%%%%%%%%%%%%%%%%%%%%%%%%%%%%%%%%%
\begin{theorem}
\label{thm:main}
	Assume that the law $\mathsf{P}_0$ is uniformly continuous with respect to the Lebesgue measure. Then, the free energy
	\begin{eqnarray}
	\label{eq:free-energy}
		\free(\xi)
		=
		\lim_{n\to\infty}
		\frac{1}{n}\ln |\pf_n(\xi)|,
	\end{eqnarray}
	exists in probability and assumes the following values:
	\vspace{1ex}
	\begin{enumerate}
		\item (\textsc{Region} $\mathcal{R}1$) If there exists $\alpha \in (1,2]$ such that $G(\alpha) < \ln \left( b \left| \esp \left[ \xi \right] \right| \right)$,
			\begin{eqnarray*}
				\free(\xi) = \fone := \ln \left( b \left| \esp \left[ \xi \right] \right| \right).
			\end{eqnarray*}
		
		\vspace{1ex}
		
		\item (\textsc{Region} $\mathcal{R}2$) 
		If $\amin<1$, or, if $1 \leq \amin < 2$, $G \left( \amin \right)>\ln \left( b \left| \esp \left[ \xi \right] \right| \right)$ and the families of random variables $\omega$ and $\theta$ are independent,
			\begin{eqnarray*}
				\free(\xi) = \ftwo := G \left( \amin \right).
			\end{eqnarray*}
		
		\vspace{1ex}
		
		\item (\textsc{Region} $\mathcal{R}3$) If $\amin>2$ and $G(2)=\frac{1}{2}\ln\left( b \esp \left[ \left| \xi \right|^2 \right]\right)>\ln \left( b \left| \esp \left[ \xi \right] \right| \right)$,
			\begin{eqnarray*}
				\free(\xi)
				=
				\fthree
				:=
				\ln \left( b \esp \left[ \left| \xi \right|^2 \right] \right).
			\end{eqnarray*}
	\end{enumerate}
	Furthermore, the limit \eqref{eq:free-energy} holds $\p$-almost surely in the region $\mathcal{R}1$ and in the part of the region $\mathcal{R}2$ where $\amin<1$. 
\end{theorem}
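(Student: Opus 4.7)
\textbf{Region $\mathcal{R}1$ via a complex martingale.} The plan is to exploit the complex-valued martingale $\mart_n := \pf_n(\xi)/(b\esp[\xi])^n$, which has mean $1$ in view of the branching recursion $\pf_{n+1} = \sum_{i=1}^b \xi(e_i)\pf_n^{(i)}$ with $\pf_n^{(i)}$ i.i.d.\ copies rooted at the $b$ children of the origin. The hypothesis $G(\alpha)<\ln(b|\esp[\xi]|)$ for some $\alpha\in(1,2]$ rewrites as $b\,\esp[|\xi|^\alpha] < (b|\esp[\xi]|)^\alpha$, which is exactly the natural condition for $L^\alpha$ boundedness of an additive martingale on a branching structure. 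Applying the von Bahr-Esseen inequality to the centred sum $\mart_n-1$ produces a one-step recursion of the form $\esp[|\mart_n|^\alpha]\leq c\,\esp[|\mart_{n-1}|^\alpha]+C$ with $c<1$, giving $\sup_n \esp[|\mart_n|^\alpha]<\infty$. Uniform integrability then implies a.s.\ and $L^1$ convergence to some $\mart_\infty$ with $\esp[\mart_\infty]=1$. The fixed-point identity $\mart_\infty=\frac{1}{b\esp[\xi]}\sum_{i=1}^b\xi(e_i)\mart_\infty^{(i)}$ combined with a Kolmogorov 0-1 argument on $\{\mart_\infty=0\}$ forces $\mart_\infty\neq 0$ almost surely. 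Writing $\frac{1}{n}\ln|\pf_n|=\ln(b|\esp[\xi]|)+\frac{1}{n}\ln|\mart_n|$ then yields $\free=\fone$ almost surely.

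\textbf{Upper bound by the positive model.} The triangle inequality gives $|\pf_n(\xi)|\leq\pf_n(|\xi|)$, and the Kahane-Peyri\`ere/Biggins theorem for positive multiplicative cascades (recalled in Section~\ref{sec:comparison}) yields $\frac{1}{n}\ln\pf_n(|\xi|)\to G(\amin\wedge 1)$ almost surely. In the sub-region $\mathcal{R}2\cap\{\amin<1\}$ this already matches the target $\ftwo=G(\amin)$. For the matching lower bound I would truncate the environment to $|\xi|\leq M$ and apply a Paley-Zygmund bound to the truncated model: when $\amin<1$ the positive cascade condenses onto a small number of heavy paths, so the phases attached to those paths cannot cause systematic cancellation and $|\pf_n(\xi)|$ retains the same exponential order as $\pf_n(|\xi|)$. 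Letting $M\to\infty$ recovers $G(\amin)$; monotonicity in $M$ upgrades the convergence to almost sure, proving the second a.s.\ assertion of the theorem.

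\textbf{Regions $\mathcal{R}2$ with $\amin\geq 1$ and $\mathcal{R}3$: convergence in probability.} Here the positive-model bound is strictly too generous (it only yields $G(1)$) and genuine phase cancellations must be extracted. Following the Derrida-Evans-Speer scheme, organised through the joint-convergence Proposition~\ref{thm:joint-convergence}, I would first compute $\esp[|\pf_n|^2]$ by expanding the double sum over path pairs and grouping by their coalescence level: when $G(2)>\ln(b|\esp[\xi]|)$ the contributions with coalescence near the top of the tree dominate and $\esp[|\pf_n|^2]\asymp (b\,\esp[|\xi|^2])^n$, which via Paley-Zygmund furnishes the lower bound matching $\fthree$. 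The compatible upper bound comes from coupling $\pf_n$ with a truncated version at an intermediate generation $\ell_n\to\infty$ on which the second-moment computation is sharp. For $\mathcal{R}2$ with $\amin\geq 1$ the analogous estimate is obtained by conditioning on $\omega$, using the independence of $\theta$ to treat the phase sum via Marcinkiewicz-Zygmund, and then applying Biggins' theorem to the radial cascade.

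\textbf{Main obstacle.} The delicate step is the lower bound in $\mathcal{R}3$ and in $\mathcal{R}2\cap\{\amin\geq 1\}$: one must prevent destructive interference from driving $|\pf_n|$ below its second-moment prediction. This is precisely where the uniform continuity of $\mathsf{P}_0$ is used, via small-ball estimates of the form $\sup_{z\in\C}\p(|\pf_k-z|\leq\epsilon)\to 0$ as $\epsilon\to 0$ uniformly in $z$. The same limitation is what blocks an almost-sure upgrade in these regions: the small-ball bounds available along the generations are not summable, so a Borel-Cantelli argument cannot close the a.s.\ gap, which is why convergence is only in probability there.
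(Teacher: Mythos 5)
Your high-level architecture lines up with the paper's — martingale for $\mathcal{R}1$, comparison with the positive model for $\mathcal{R}2$ when $\amin<1$, conditioning on $\omega$ and Paley--Zygmund for the remainder of $\mathcal{R}2$ and for $\mathcal{R}3$ — but there are three genuine gaps in the argument.

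First, in $\mathcal{R}1$ you invoke a ``Kolmogorov 0--1 argument'' on $\{\mart_\infty=0\}$. That event is not a tail event, so Kolmogorov's law does not apply. The paper instead derives $\p[\mart_\infty=0]=\p[\mart_\infty=0]^b$ from the fixed-point identity together with the observation that, for $\xi$ with a continuous law, a nontrivial complex linear combination $\sum_x z_x \xi(x)$ vanishes with probability zero (Lemma~5.3 there). This is precisely where the hypothesis on $\mathsf{P}_0$ enters $\mathcal{R}1$, and your sketch skips it.

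Second, in $\mathcal{R}2$ with $\amin<1$ your claim that ``the phases attached to the heavy paths cannot cause systematic cancellation'' is the entire difficulty, not a step you can assert. The paper proves the matching lower bound directly, without truncation, by writing
\begin{equation*}
|\pf_n(\xi)|^2
=
\pf_n ( |\xi|^2 )
\Bigl( 1 + \frac{E_n(\xi)}{\pf_n ( |\xi|^2)} \Bigr)
\geq
\pf_n ( |\xi|^2 )
\Bigl( 1 + \frac{E_n(\xi)}{\pf_n(|\xi|)^2} \Bigr),
\end{equation*}
and showing by contradiction (comparing $|E_n(\xi)|\le E_n(|\xi|)$ with $\pf_n(|\xi|)^2=\pf_n(|\xi|^2)+E_n(|\xi|)$) that the correction term cannot decay exponentially, since that would force $\free(|\xi|^2)<2\free(|\xi|)$, contradicting $\free(|\xi|^2)=2\free(|\xi|)=2G(\amin)$ in strong disorder. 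Without an argument of this calibre your Paley--Zygmund step has nothing to bite on.

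Third, for $\mathcal{R}2$ with $1\le\amin<2$, you must show that $\tfrac{1}{n}\ln\esp[|\pf_n(\xi)|^2\,|\,\omega]\to 2G(\amin)$ almost surely; the lower bound $\esp[|\pf_n|^2|\omega]\ge \pf_n(|\xi|^2)$ is easy, but the matching upper bound is the crux and your proposal does not address it (Marcinkiewicz--Zygmund and Biggins, as you list them, only give the lower bound). The paper resolves it by constructing an auxiliary two-point phase $\tilde\theta$ with the same value of $|\esp[e^{\iota\theta}]|$, so that $\esp[|\pf_n(\xi)|^2|\omega]=\esp[|\pf_n(e^{\iota\tilde\theta}|\xi|)|^2|\omega]$, then interpolating in $\gamma\in[0,1]$: the map $\gamma\mapsto\esp[|\pf_n(e^{\iota\gamma\tilde\theta}|\xi|)|^2|\omega]$ is monotone, and for $\gamma$ just below the $\mathcal{R}1$--$\mathcal{R}2$ boundary the second estimate of Lemma~\ref{thm:key-independent-phase-and-radii} (a conditional moment bound plus Borel--Cantelli) gives the required almost sure upper bound, with a free energy converging to $2G(\amin)$ as $\gamma\uparrow\gamma_0$. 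Your ``truncation at an intermediate generation $\ell_n$'' is not what is done and would need a fully worked-out argument of comparable strength. On the other hand, your diagnosis of the role of uniform continuity (small-ball estimates for the lower-tail bootstrap) and of why the a.s.\ upgrade needs the extra $\tau$-condition is accurate.
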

%%%%%%%%%%%%%%%%%%%%%%%%%%%%%%%%%%%%%%%%%%%%%%%%%%%%%%%%%%%%%%%
%%%%%%%%%%%%%%%%%%%%%%%%%%%%%%%%%%%%%%%%%%%%%%%%%%%%%%%%%%%%%%%
The above result was obtained in \cite{DES-Complex} under the hypothesis that the random variables $\omega$ and $\theta$ are independent. As can be seen above, we remove this restriction from most of the phase diagram, except from the part of the region $\mathcal{R}2$ where $1\leq \amin <2$. As we will observe later, under this hypothesis, the model with positive weights $|\xi(x)|$ is in the weak-disorder regime. Here, the addition of sufficiently disordered random phases can induce strong disorder. It is then quite natural to expect that some hypotheses are required to observe such a behavior.
%%%%%%%%%%%%%%%%%%%%%%%%%%%%%%%%%%%%%%%%%%%%%%%%%%%%%%%%%%%%%%%
%%%%%%%%%%%%%%%%%%%%%%%%%%%%%%%%%%%%%%%%%%%%%%%%%%%%%%%%%%%%%%%
\begin{remark}
	Note that, if the law of the environment is non-trivial,
	\begin{eqnarray*}
		G(1) 
		= 
		\ln\left( b \esp[|\xi|] \right)
		>
		\ln\left( b|\esp[\xi]| \right),
	\end{eqnarray*}
	so that, if $\amin\leq 1$, $G(\alpha)>G(1)$ for all $\alpha>1$ and the system cannot be in the region $\mathcal{R}1$. Then, in $\mathcal{R}1$, we must have that $\amin>1$. 
	
	But, in this case, if there exists $\alpha\in(1,2]$ such that $G(\alpha)<\ln(b|\esp[\xi]|)$, it would hold that $G(\amin)<\ln(b|\esp[\xi]|)$, so that the system cannot be in $\mathcal{R}2$. In the same vein, if $\amin>2$, we would have that $G(2)<G(\alpha)<\ln(b|\esp[\xi]|)$, so that the system cannot be in $\mathcal{R}3$.
	
	This shows that the conditions in the theorem are mutually exclusive.
\end{remark}
%%%%%%%%%%%%%%%%%%%%%%%%%%%%%%%%%%%%%%%%%%%%%%%%%%%%%%%%%%%%%%%
%%%%%%%%%%%%%%%%%%%%%%%%%%%%%%%%%%%%%%%%%%%%%%%%%%%%%%%%%%%%%%%

%%%%%%%%%%%%%%%%%%%%%%%%%%%%%%%%%%%%%%%%%%%%%%%%%%%%%%%%%%%%%%%
%%%%%%%%%%%%%%%%%%%%%%%%%%%%%%%%%%%%%%%%%%%%%%%%%%%%%%%%%%%%%%%
\begin{remark}
\label{rk:relation-with-modulus-env}
	From the exact computations of Section \ref{sec:basic-recursions}, we will see that, in the region $\mathcal{R}1$, it holds that
	\begin{eqnarray*}
		\free(\xi)
		=
		\lim_{n\to\infty}
		\frac{1}{n}\ln |\esp[\pf_n(\xi)]|.
	\end{eqnarray*}
	The right hand side is usually known as the annealed free energy. In the nomenclature of the model with positive weights, this region corresponds to weak disorder.
	
	In Section \ref{sec:comparison} below, we will recall the possible values of the free energy for the model with positive weights. We will then be able to remark that, in the regions $\mathcal{R}2$ and $\mathcal{R}3$, it holds that
	\begin{eqnarray*}
		\free(\xi)
		=
		\lim_{n\to\infty}
		\frac{1}{2n}\ln \pf_n ( \left| \xi \right|^2 ),
	\end{eqnarray*}
	where $\pf_n ( \left| \xi \right|^2 )$ denotes the partition function of the model with weights $|\xi|^2=\left\{ \left| \xi(x) \right|^2: \ x\in\mathbb{T}\right\}$, which is in the strong disorder regime (resp. weak disorder) in the region $\mathcal{R}2$ (resp. $\mathcal{R}3$). In the case of $\mathcal{R}3$, it also holds that
	\begin{eqnarray*}
		\free(\xi)
		=
		\lim_{n\to\infty}
		\frac{1}{2n}\ln \esp \left[\pf_n ( \left| \xi \right|^2 ) \right],
	\end{eqnarray*}
	which corresponds to (half of) the annealed free energy for the model with weights $|\xi|^2$.
\end{remark}
%%%%%%%%%%%%%%%%%%%%%%%%%%%%%%%%%%%%%%%%%%%%%%%%%%%%%%%%%%%%%%%
%%%%%%%%%%%%%%%%%%%%%%%%%%%%%%%%%%%%%%%%%%%%%%%%%%%%%%%%%%%%%%%

As noted in Theorem \ref{thm:main}, the limit \eqref{eq:free-energy} is in fact an almost sure limit in the region $\mathcal{R}1$ and in the case $\amin\leq 1$. This can be extended to the whole phase diagram under an additional hypothesis on the law of the environment. Let $\tau\in(0,2]$. We say that the environment satisfies the $\tau$-condition if there exists a finite constant $C>0$ such that
\begin{eqnarray*}
	\sup_{z\in \mathbb{C}}\p[\xi \in B(z;r)] \leq C r^{\tau},\,\, \forall \, r>0,
	\quad
	\text{and}
	\quad
	\esp\left[ \left| \xi \right|^{-\tau} \right]<\infty.
\end{eqnarray*}
%for all $z\in\mathbb{C}$ and $r>0$ and
%\begin{eqnarray*}
%	\esp\left[ \left| \xi \right|^{-\tau} \right]<\infty.
%\end{eqnarray*}
%%%%%%%%%%%%%%%%%%%%%%%%%%%%%%%%%%%%%%%%%%%%%%%%%%%%%%%%%%%%%%%
%%%%%%%%%%%%%%%%%%%%%%%%%%%%%%%%%%%%%%%%%%%%%%%%%%%%%%%%%%%%%%%
\begin{theorem}
\label{thm:main-as}
	In addition to the hypotheses of Theorem \ref{thm:main}, assume that the law of the environment satisfies the $\tau$-property for some $\tau\in(0,2]$. Then, the limit \eqref{eq:free-energy} holds $\p$-almost surely.
\end{theorem}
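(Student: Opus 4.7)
The plan is to upgrade the in-probability convergence of Theorem \ref{thm:main} to almost sure convergence by establishing an exponential concentration bound
$$\p\bigl[\,|\tfrac{1}{n}\ln|\pf_n(\xi)|-f|>\delta\,\bigr] \leq e^{-c(\delta)\, n}\qquad \text{for every }\delta>0,$$
after which Borel--Cantelli yields the a.s.\ limit. Since Theorem \ref{thm:main} already provides a.s.\ convergence in $\mathcal{R}1$ and in the portion of $\mathcal{R}2$ where $\amin<1$, only the cases $1\leq\amin<2$ in $\mathcal{R}2$ and $\mathcal{R}3$ remain to be handled.

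The upper-tail estimate $\p[\ln|\pf_n|>n(f+\delta)] \leq e^{-cn}$ does not require the $\tau$-condition. In $\mathcal{R}3$ it follows from the exact identity $\esp|\pf_n|^2 = (b\,\esp|\xi|^2)^n = e^{2nf}$ via Markov's inequality; in $\mathcal{R}2$ with $1\leq\amin<2$, a Burkholder--Davis--Gundy bound applied to the branching martingale structure gives $\esp|\pf_n|^{\amin}\leq C\, e^{\amin\, n\, f}$, and Markov again concludes. Borel--Cantelli then already yields $\limsup_n \tfrac{1}{n}\ln|\pf_n|\leq f$ a.s., independently of the $\tau$-condition.

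For the lower tail, the plan is to condition on the environment above the last level and write
$$\pf_n=\sum_{|y|=n}\xi(y)\, W(y),\qquad W(y):=\prod_{t=1}^{n-1}\xi(\walk_t^{(y)}),$$
where $\walk^{(y)}$ denotes the path from the root to the parent of $y$. The $\xi(y)$, $|y|=n$, are i.i.d.\ copies of $\xi$, independent of the $\mathcal{F}_{n-1}$-measurable weights $W(y)$. Fixing $y^\ast = \arg\max_{|y|=n}|W(y)|$ (which is $\mathcal{F}_{n-1}$-measurable), further conditioning on $(\xi(y))_{y\neq y^\ast}$, and applying the $\tau$-condition directly to $\xi(y^\ast)$ yields the small-ball bound
$$\sup_{z\in\C}\p\bigl[\pf_n\in B(z;r)\mid \mathcal{F}_{n-1}\bigr]\leq C\, r^{\tau}\,\bigl(\max_{|y|=n}|W(y)|\bigr)^{-\tau}.$$
In region $\mathcal{R}2$, one has $f=G(\amin)=\inf_{\alpha>0}G(\alpha)$, which is exactly the speed of the branching random walk maximum $\max_{|y|=n}|W(y)|$; a Chernoff / many-to-one estimate provides $\p[\max_{|y|=n}|W(y)|<e^{n(f-\delta/2)}]\leq e^{-cn}$, which combined with the small-ball estimate delivers the required exponential lower-tail bound, and therefore $\liminf_n \tfrac{1}{n}\ln|\pf_n|\geq f$ a.s.

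The main obstacle lies in region $\mathcal{R}3$: here $f=G(2)>G(\amin)$, so the BRW-maximum grows strictly slower than the typical $|\pf_n|$, and no individual path accounts for the magnitude of the partition function, which arises from bulk-scale interference among the $b^n$ paths. The crude max-based anti-concentration is thus too weak, and the plan is to replace it by a finer small-ball inequality involving the $\ell^2$ weight $\sum_{|y|=n}|W(y)|^2$. This sum coincides (up to the last-level factor) with the partition function of the $|\xi|^2$-polymer, which by Remark \ref{rk:relation-with-modulus-env} is in the weak disorder regime in $\mathcal{R}3$, so it concentrates around $e^{2nf}$ (the hypothesis $\amin>2$ supplying the fourth-moment control needed for the second-moment method). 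Combining an Esseen/Rogozin-type refinement of the $\tau$-based small-ball bound, valid once the $\ell^2$-weight is of the correct order, with an exponential control of $\sum_{|y|=n}|W(y)|^2$ about its mean (iterated through the levels of the tree) should then yield $\p[|\pf_n|<e^{n(f-\delta)}]\leq e^{-cn}$ and complete the proof.
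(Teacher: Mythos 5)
Your strategy differs substantially from the paper's, and while the $\mathcal{R}2$ half is viable, the proposal has a genuine gap in region $\mathcal{R}3$ that you yourself flag but do not close.

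The paper's argument (Lemma \ref{thm:as-lower-bound}, fed into Proposition \ref{thm:joint-convergence}) never proves exponential concentration. Instead it decomposes
\begin{eqnarray*}
\pf_{l+(m+1)k} = \sum_{x\in\T_{l+k}} \xi_{l+k}(x)\,\pf^{l+k}_{mk,x},
\end{eqnarray*}
that is, it conditions at an \emph{intermediate} generation $l+k$ and takes the maximum over the $b^{l+k}$ \emph{sub-tree partition functions} $\pf^{l+k}_{mk,x}$. These are i.i.d.\ copies of $\pf_{mk}(\xi)$, so the in-probability result of Theorem~\ref{thm:main} gives $\p[\ln|\pf^{l+k}_{mk,x}|<mk(\free-c)]\leq p<1$ for each copy, and the max fails only with probability $p^{b^{l+k}}$: a doubly exponential decay, summable along the arithmetic progression $l+(m+1)k$. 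The $\tau$-condition then controls the factor $A_{k,l}=\pf_{l+(m+1)k}/\pf^{l+k}_{mk,*}$ via the small-ball estimate for $\xi_{l+k}(x)$ (which is a product of $l+k$ weights, whence the need for $\esp[|\xi|^{-\tau}]<\infty$). The crucial point is that the sub-tree partition functions already carry the correct speed $\free$ in \emph{every} region, including $\mathcal{R}3$, so the same argument works uniformly across the phase diagram.

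Your decomposition instead isolates the last generation and compares $\pf_n$ to the max of the single-path weights $|W(y)|$, i.e.\ the max of a branching random walk. This max grows at speed $G(\amin)$ (the Legendre–transform / Biggins speed). In $\mathcal{R}2$ one has $\free = G(\amin)$, so the idea can work there (though you would still need to justify the exponential lower-tail bound for the BRW max, which ultimately calls for the same ``condition on an intermediate generation'' device). But in $\mathcal{R}3$ one has $\free = G(2) > G(\amin)$, so the BRW max is exponentially smaller than $|\pf_n|$, and the single-term $\tau$-based small-ball bound $C(r/\max|W(y)|)^\tau$ is too weak by an exponential factor. You recognize this and propose an Esseen/Rogozin-type anti-concentration bound driven by the $\ell^2$-weight $\sum_y |W(y)|^2$, but this is left as ``should then yield'': a Rogozin-type concentration-function bound does \emph{not} directly produce a $(r/\|W\|_2)^\tau$ scaling under the $\tau$-condition alone (concentration-function inequalities typically yield a $1/\sqrt{\#\{y:|W(y)|\gtrsim r\}}$-type gain, not a power of the $\ell^2$-norm), and making this rigorous for complex-valued weights with a 2D small-ball hypothesis is a non-trivial separate lemma you would have to supply. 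This is exactly the step the paper sidesteps by maximizing over sub-tree partition functions rather than over individual paths.

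In short: the missing idea is to push the decomposition to an intermediate generation and compare against the max of i.i.d.\ copies of $\pf_{mk}$, which have speed $\free$ in all three regions, rather than against the BRW max of single-path weights, which does not.
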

%%%%%%%%%%%%%%%%%%%%%%%%%%%%%%%%%%%%%%%%%%%%%%%%%%%%%%%%%%%%%%%
%%%%%%%%%%%%%%%%%%%%%%%%%%%%%%%%%%%%%%%%%%%%%%%%%%%%%%%%%%%%%%%
In the case of positive weights, martingale techniques are a powerful tool to obtain information on the model. This is still true for complex-valued weights, at least in the region $\mathcal{R}1$.
For $n\geq 0$, let $\T_{\leq n}=\bigcup^n_{j=1}\T_j$ and let $\calf_n = \sigma \left( \xi(x):\ x\in\T_{\leq n} \right)$.
It is a standard fact that, if $\esp[\xi]\neq 0$, then
\begin{eqnarray*}
	\mart_n(\xi)
	=
	\frac{\pf_n(\xi)}{\esp[\pf_n(\xi)]},
\end{eqnarray*}
is an $\calf_n$-martingale. More can be said in the region $\mathcal{R}1$.
%%%%%%%%%%%%%%%%%%%%%%%%%%%%%%%%%%%%%%%%%%%%%%%%%%%%%%%%%%%%%%%
%%%%%%%%%%%%%%%%%%%%%%%%%%%%%%%%%%%%%%%%%%%%%%%%%%%%%%%%%%%%%%%
\begin{proposition}
\label{thm:uniform-integrability}
The martingale $(\mart_n(\xi))_{n \geq 1}$ is uniformly integrable in the region $\mathcal{R}1$. Moreover, if
\begin{eqnarray}
	\label{eq:condition-L2}
	\esp \left[ \left| \xi \right|^2 \right]< b |\esp[\xi]|^2,
\end{eqnarray}
then
\begin{eqnarray*}
	\sup_{n \geq 1} \esp\left[ \left| \mart_n(\xi) \right|^2\right]<\infty.
\end{eqnarray*}
\end{proposition}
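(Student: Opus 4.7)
The plan is to treat both assertions via the branching recursion
\[
   \mart_n \;=\; \frac{1}{b}\sum_{i=1}^b \frac{\xi(x_i)}{\esp[\xi]}\,\mart_{n-1}^{(i)},
\]
where $x_1,\ldots,x_b$ are the children of the root and $\mart_{n-1}^{(i)}$ denotes the analogue of $\mart_{n-1}$ built on the subtree rooted at $x_i$; the $\mart_{n-1}^{(i)}$ are i.i.d.\ copies of $\mart_{n-1}$, independent of $(\xi(x_i))_{i\le b}$.

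For the $L^2$-bound under \eqref{eq:condition-L2} I would expand $|\mart_n|^2$ from the recursion and take expectations: off-diagonal terms ($i\neq j$) contribute $(b-1)/b$ (using $\esp[\mart_{n-1}]=1$ and independence of the subtrees), while the diagonal terms contribute $\frac{\esp[|\xi|^2]}{b|\esp[\xi]|^2}\,\esp[|\mart_{n-1}|^2]$. Hypothesis \eqref{eq:condition-L2} says exactly that this multiplier is strictly less than $1$, so the affine recursion for $\esp[|\mart_n|^2]$ is contractive and bounded.

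For uniform integrability in $\mathcal{R}1$ it suffices to show $\sup_n\esp[|\mart_n|^\alpha]<\infty$ for some $\alpha\in(1,2]$ with $G(\alpha)<\ln(b|\esp[\xi]|)$, equivalently $\rho:=\esp[|\xi|^\alpha]/\bigl(b^{\alpha-1}|\esp[\xi]|^\alpha\bigr)<1$. I would analyze the martingale differences $d_n:=\mart_n-\mart_{n-1}$ by grouping contributions by their vertex $w\in\T_{n-1}$; using $\pf_n=\sum_{w\in\T_{n-1}}\prod_{t=1}^{n-1}\xi(w_t)\cdot S(w)$, with $S(w)=\sum_c\xi(c)$ over the children of $w$, a short computation gives
\[
   d_n \;=\; \frac{1}{(b\,\esp[\xi])^{n-1}}\sum_{w\in\T_{n-1}}\Bigl(\prod_{t=1}^{n-1}\xi(w_t)\Bigr)\frac{S(w)-b\,\esp[\xi]}{b\,\esp[\xi]}.
\]
Conditional on $\calf_{n-1}$, the family $(S(w)-b\esp[\xi])_{w\in\T_{n-1}}$ depends on disjoint subsets of $\T_n$ and is therefore independent, centered and equidistributed, while the prefactors are $\calf_{n-1}$-measurable. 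Applying the von Bahr--Esseen inequality (in its complex-valued form, obtained by splitting into real and imaginary parts) conditionally and then taking the outer expectation yields
\[
   \esp[|d_n|^\alpha] \;\leq\; C_\alpha\,\sigma_\alpha^\alpha\,\rho^{\,n-1},
\]
where $\sigma_\alpha^\alpha:=\esp[|S-b\esp[\xi]|^\alpha]/|b\esp[\xi]|^\alpha<\infty$ by \eqref{eq:finite-moments}.

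Finally, I would invoke Burkholder's inequality for the complex-valued martingale $(\mart_n)$ together with the subadditivity $(\sum_k|d_k|^2)^{\alpha/2}\leq\sum_k|d_k|^\alpha$ (valid for $\alpha\leq 2$) to deduce $\esp[|\mart_n|^\alpha]\leq C\bigl(1+\sum_k\esp[|d_k|^\alpha]\bigr)\leq C'(1-\rho)^{-1}$, uniformly in $n$, whence uniform integrability of $(\mart_n)$. The main technical point is the rearrangement of $d_n$ so that the ``last-generation'' randomness factors out as independent centered pieces: this is what lets a small-$p$ moment inequality like von Bahr--Esseen produce the geometric decay in $\rho$, whereas a na\"ive recursive bound for $\esp[|\mart_n-1|^\alpha]$ would accumulate multiplicative constants and fail to close under the mere assumption $\rho<1$.
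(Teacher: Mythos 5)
Your proposal is correct, and it is worth noting that it splits into two halves of different character relative to the paper. For the $L^2$ bound, your expansion of $|\mart_n|^2$ via the first-generation branching recursion and the observation that the diagonal multiplier is $\esp[|\xi|^2]/(b|\esp[\xi]|^2)<1$ is essentially the same computation the paper runs (the paper iterates the conditional recursion $\esp[|\mart_{n+1}|^2\,|\,\calf_n]=|\mart_n|^2+\text{(geometric correction)}$, which produces the same affine recursion). For the uniform integrability, however, your route is genuinely different. The paper relies on its Lemma \ref{thm:moment-estimates}: starting from the Jensen bound $\esp[|\pf_n|^\theta]\le\esp\bigl[\esp[|\pf_n|^2\,|\,\calf_{n-1}]^{\theta/2}\bigr]$ (valid since $\theta/2<1$) and the subadditivity $(a+b)^{\theta/2}\le a^{\theta/2}+b^{\theta/2}$, it extracts a contractive affine recursion for $R_n:=\esp[\esp[|\mart_n|^2\,|\,\calf_{n-1}]^{\theta/2}]$, with contraction ratio $b\esp[|\xi|^\theta]/(b|\esp[\xi]|)^\theta<1$ — the same $\rho$ you find. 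Your argument instead isolates the increments $d_n=\mart_n-\mart_{n-1}$, reorganizes them so that the $\calf_{n-1}$-measurable prefactors multiply conditionally independent, centered pieces $S(w)-b\esp[\xi]$ over $w\in\T_{n-1}$, applies a conditional von Bahr--Esseen inequality to get $\esp[|d_n|^\alpha]\lesssim\rho^{n-1}$, and closes with Burkholder together with $(\sum_k|d_k|^2)^{\alpha/2}\le\sum_k|d_k|^\alpha$. This is the classical cascade/branching-random-walk route (Biggins, Kahane--Peyri\`ere style), made a bit heavier here because $\mart_n$ is complex-valued so both von Bahr--Esseen and Burkholder have to be applied componentwise (you flag this for the former; the same comment applies to the latter). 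The paper's approach is more self-contained, using only Jensen, but the bookkeeping of the conditional second moment is more intricate; yours makes the role of last-generation independence structurally transparent and yields the geometric decay of the increments directly, at the price of importing two nontrivial martingale inequalities. Both lead to the same criterion $\rho<1\iff G(\alpha)<\ln(b|\esp[\xi]|)$, which is the region $\mathcal{R}1$.
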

%%%%%%%%%%%%%%%%%%%%%%%%%%%%%%%%%%%%%%%%%%%%%%%%%%%%%%%%%%%%%%%
%%%%%%%%%%%%%%%%%%%%%%%%%%%%%%%%%%%%%%%%%%%%%%%%%%%%%%%%%%%%%%%
The condition \eqref{eq:condition-L2} defines what is known as the $\mathbb{L}^2$ region. 

%%%%%%%%%%%%%%%%%%%%%%%%%%%%%%%%%%%%%%%%%%%%%%%%%%%%%%%%%%%%%%%
%%%%%%%%%%%%%%%%%%%%%%%%%%%%%%%%%%%%%%%%%%%%%%%%%%%%%%%%%%%%%%%
%%%%%%%%%%%%%%%%%%%%%%%%%%%%%%%%%%%%%%%%%%%%%%%%%%%%%%%%%%%%%%%
%%%%%%%%%%%%%%%%%%%%%%%%%%%%%%%%%%%%%%%%%%%%%%%%%%%%%%%%%%%%%%%

\subsection{Independent radii and phases}

%%%%%%%%%%%%%%%%%%%%%%%%%%%%%%%%%%%%%%%%%%%%%%%%%%%%%%%%%%%%%%%
%%%%%%%%%%%%%%%%%%%%%%%%%%%%%%%%%%%%%%%%%%%%%%%%%%%%%%%%%%%%%%%
%%%%%%%%%%%%%%%%%%%%%%%%%%%%%%%%%%%%%%%%%%%%%%%%%%%%%%%%%%%%%%%
%%%%%%%%%%%%%%%%%%%%%%%%%%%%%%%%%%%%%%%%%%%%%%%%%%%%%%%%%%%%%%%
We now give a better picture of the phase diagram for the model with independent random phases and radii. 
Let $\{\omega(x): \, x\in\T\}$ and $\{\theta(x): \, x\in\T\}$ be two independent families of i.i.d. real-valued random variables with continuous distributions and let
\begin{eqnarray*}
	\xibg(x)=e^{\beta \omega(x)+\iota \gamma \theta(x)}.
\end{eqnarray*}
Let
\begin{eqnarray*}
	&&
	\lambda_{\R}(\beta) = \ln \esp \left[ e^{\beta \omega} \right],
	\qquad
	\lambda_{\C}(\gamma) = -\ln \left| \esp \left[ e^{\iota \gamma \theta} \right] \right|.
%	\\
%	&&
%	\Lambda(\beta,\gamma)
%	:=
%	\ln |\esp[\xibg]|
%	=
%	\lambda_{\R}(\beta)-\lambda_{\C}(\gamma).
\end{eqnarray*}
Define the quantities $0<\beta_0<\beta_c$ and $0<\gamma_0<\gamma_c$ by the relations
\begin{align*}
	2\beta_0 \lrad'(2\beta_0)-\lrad(2\beta_0)
	&=
	\ln b,
	&
	\beta_c \lrad'(\beta_c)-\lrad(\beta_c)&=\ln b,
	\\
	\lrad(2\beta_0)-2\lrad(\beta_0)+2\lph(\gamma_0)
	&=
	\ln b,
	&
	2\lph(\gamma_c)&=\ln b.
\end{align*}
In order to keep the discussion as simple as possible, we assume that $\beta_c,\gamma_c<\infty$. Note that $\beta_0=\beta_c/2$. We denote
\begin{align*}
	\free(\beta,\gamma)
	=
	\lim_{n\to\infty}
	\frac{1}{n}\ln \left| \pf_n  \left( \xibg \right) \right|.
\end{align*}
%%%%%%%%%%%%%%%%%%%%%%%%%%%%%%%%%%%%%%%%%%%%%%%%%%%%%%%%%%%%%%%
%%%%%%%%%%%%%%%%%%%%%%%%%%%%%%%%%%%%%%%%%%%%%%%%%%%%%%%%%%%%%%%
\begin{corollary}
\label{cor:indep}
Under the hypotheses above, the three regions can be characterized as follows:
\begin{itemize}
	\item \textit{(\textsc{Region} $\mathcal{R}1$)} $\beta_0 \le \beta < \beta_c$ and $\gamma\geq0$ satisfy
	\begin{align*}
	\beta \lambda_\R' (\beta_{c}) - \lambda_\R (\beta) + \lambda_\C (\gamma) < \ln b,
	\end{align*}
	or $0 \le \beta \le \beta_0$ and $\gamma\ge0$ satisfy
	\begin{align*}
	\lambda_\R (2 \beta) - 2 \lambda_\R (\beta)+ 2 \lambda_\C (\gamma) < \ln b.
	\end{align*}
	Furthermore, 
	$ \displaystyle \free(\beta,\gamma)
	=
	\fone(\beta,\gamma)
	:=
	\ln b + \lrad(\beta)-\lph(\gamma) $.
	
	\item \textit{(\textsc{Region} $\mathcal{R}2$)} $ \beta > \beta_c $ or $\beta_0 \le \beta < \beta_c$ and $\gamma\ge0$ satisfy
	\begin{align*}
	\beta \lambda_\R' (\beta_{c}) - \lambda_\R (\beta) + \lambda_\C (\gamma) > \ln b.
	\end{align*}
	Furthermore, 
	$ \displaystyle \free(\beta,\gamma) =
	\ftwo (\beta,\gamma) :=  \beta \lrad'(\beta_c)$.

	\item \textit{(\textsc{Region} $\mathcal{R}3$)} $0 \le \beta < \beta_0$ and $\gamma\ge0$ satisfy
	\begin{align*}
	\lambda_\R (2 \beta) - 2 \lambda_\R (\beta)+ 2 \lambda_\C (\gamma) > \ln b.
	\end{align*}
	Furthermore, 
	$ \displaystyle \free(\beta,\gamma) =
	\fthree (\beta,\gamma)
	:=
	\tfrac{1}{2}\left( \ln b + \lrad(2\beta)\right)$.
\end{itemize}
%	There exists a (continuous?) decreasing function $r:[0,\infty) \to [0,\infty)$ such that $r(0)=\beta_c$, $r(\gamma_o)=\beta_o$, $r(\gamma)=0$ for all $\gamma\geq \gamma_c$ and such that
%	\begin{enumerate}
%		\item If $\beta \leq r(\gamma)$, the model is in the region $\mathcal{R}1$,
%		
%		\vspace{1ex}
%		
%		\item If $\beta > \max\{r(\gamma),\beta_o\}$, the model is in the region $\mathcal{R}2$,
%		
%		\vspace{1ex}
%		
%		\item If $r(\gamma) < \beta < \beta_o$, the model is in the region $\mathcal{R}3$.
%	\end{enumerate}
\end{corollary}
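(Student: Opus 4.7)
The plan is to specialize Theorem~\ref{thm:main} to the environment $\xibg(x)=e^{\beta\omega(x)+\iota\gamma\theta(x)}$, reducing the abstract conditions of the theorem to explicit inequalities in $\lrad$ and $\lph$. Since $\omega$ and $\theta$ are independent,
\begin{align*}
\esp\bigl[|\xibg|^{\alpha}\bigr] = e^{\lrad(\alpha\beta)},\qquad
|\esp[\xibg]| = e^{\lrad(\beta)-\lph(\gamma)},
\end{align*}
so that $G(\alpha) = (\ln b + \lrad(\alpha\beta))/\alpha$ and $\ln(b|\esp[\xibg]|)=\ln b+\lrad(\beta)-\lph(\gamma)$. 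Setting $G'(\alpha)=0$ leads to $\alpha\beta\lrad'(\alpha\beta)-\lrad(\alpha\beta)=\ln b$, which by the very definition of $\beta_c$ forces the key identity $\amin=\beta_c/\beta$. In particular, $\amin=1\Leftrightarrow\beta=\beta_c$ and $\amin=2\Leftrightarrow\beta=\beta_0$; plugging $\amin\beta=\beta_c$ into $G(\amin)$ and using $\beta_c\lrad'(\beta_c)=\ln b+\lrad(\beta_c)$ gives the further identity $G(\amin)=\beta\lrad'(\beta_c)$.

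Armed with these identifications, I would verify each region by direct substitution. The $\mathcal{R}1$ condition (existence of $\alpha\in(1,2]$ with $G(\alpha)<\ln(b|\esp[\xibg]|)$) splits according to where $\amin$ falls: if $\beta_0\le\beta<\beta_c$, then $\amin\in(1,2]$, the optimal choice is $\alpha=\amin$, and the condition becomes $\beta\lrad'(\beta_c)-\lrad(\beta)+\lph(\gamma)<\ln b$; if $\beta<\beta_0$, then $\amin>2$, so $G$ is strictly decreasing on $(1,2]$, the optimum is $\alpha=2$, and the condition becomes $\lrad(2\beta)-2\lrad(\beta)+2\lph(\gamma)<\ln b$. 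The case $\beta\ge\beta_c$ is excluded from $\mathcal{R}1$ because then $G$ is increasing on $[1,\infty)$ and $G(1)=\ln b+\lrad(\beta)\ge\ln(b|\esp[\xibg]|)$ (as $\lph\ge0$). The free-energy values $\fone=\ln b+\lrad(\beta)-\lph(\gamma)$, $\ftwo=G(\amin)=\beta\lrad'(\beta_c)$ and $\fthree=G(2)=\tfrac12(\ln b+\lrad(2\beta))$ then follow immediately, and the mutual exclusivity of the three regions in Theorem~\ref{thm:main} yields the stated descriptions of $\mathcal{R}2$ and $\mathcal{R}3$ by taking complementary inequalities.

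The only step that is not a purely formal translation is checking the hypothesis of Theorem~\ref{thm:main}, namely that the law of $\xibg$ is uniformly continuous with respect to Lebesgue measure on $\mathbb{C}$. Since $\omega$ and $\theta$ are independent with continuous distributions on $\mathbb{R}$, the pushforward of their product law by the smooth, locally injective map $(x,y)\mapsto e^{\beta x+\iota\gamma y}$ is non-atomic on $\mathbb{C}\setminus\{0\}$; working in polar coordinates and combining tightness of $\beta\omega$ with a uniform-continuity argument on a large annulus $\{r_1\le|w|\le r_2\}$ yields $\sup_{z\in\mathbb{C}}\mathsf{P}_0[\xibg\in B(z,\delta)]\to 0$ as $\delta\to 0$. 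I expect this uniform-continuity verification to be the only real (and modest) obstacle, the remainder of the argument being algebraic bookkeeping.
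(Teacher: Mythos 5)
Your proposal takes essentially the same route as the paper's proof in Section~\ref{sec:independent}: specialize $G$ to $G_\beta(\alpha)=(\ln b+\lrad(\alpha\beta))/\alpha$, derive $\amin=\beta_c/\beta$ and $G_\beta(\amin)=\beta\lrad'(\beta_c)$ from the stationarity equation and the definition of $\beta_c$, identify $\amin>2\Leftrightarrow\beta<\beta_0$ and $\amin>1\Leftrightarrow\beta<\beta_c$, split the $\mathcal{R}1$ condition according to whether the minimizer $\amin$ lies in $(1,2]$ or beyond $2$, and read off the free energies by substitution. The only material difference is that you devote a paragraph to verifying that the law of $\xibg$ is uniformly continuous in the sense of the theorem's hypothesis, a step the paper elides; this is a harmless (and in fact simpler than you suggest, since any non-atomic probability law on $\mathbb{C}$ is automatically uniformly continuous in the stated sense by a tightness argument) addition rather than a different argument.
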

%%%%%%%%%%%%%%%%%%%%%%%%%%%%%%%%%%%%%%%%%%%%%%%%%%%%%%%%%%%%%%%
%%%%%%%%%%%%%%%%%%%%%%%%%%%%%%%%%%%%%%%%%%%%%%%%%%%%%%%%%%%%%%%
Figure 1 shows the phase diagram in the case where $\omega$ and $\theta$ are independent families of i.i.d. standard Gaussian random variables.
%%%%%%%%%%%%%%%%%%%%%%%%%%%%%%%%%%%%%%%%%%%%%%%%%%%%%%%%%%%%%%%
%%%%%%%%%%%%%%%%%%%%%%%%%%%%%%%%%%%%%%%%%%%%%%%%%%%%%%%%%%%%%%%
\begin{figure}[ht!]
\label{fig:gaussian}
		\frame{\includegraphics[scale=0.10, trim=8 4 8 8, clip]{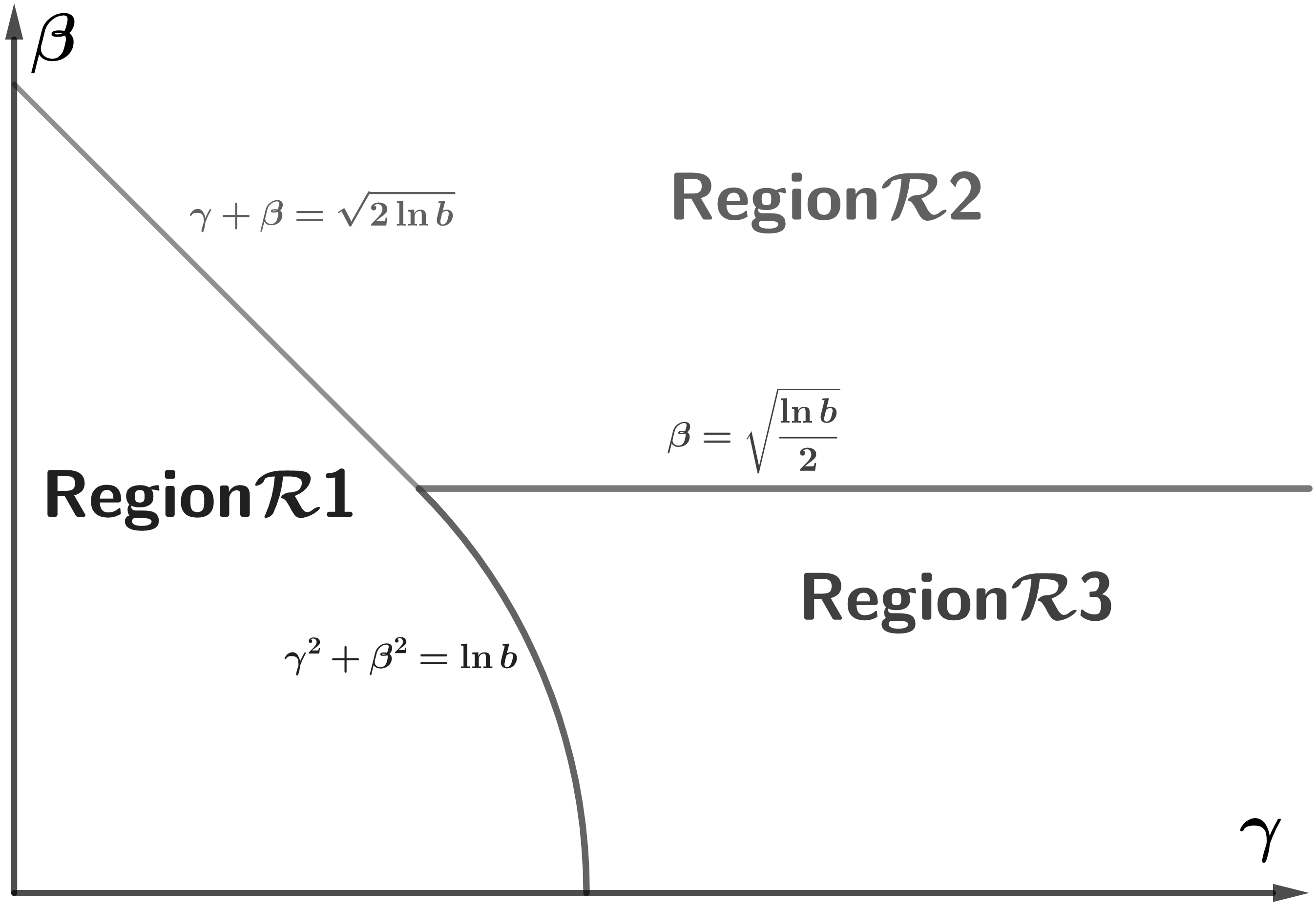}}
		\caption{Phase diagram for Gaussian environments.}
\end{figure}
%%%%%%%%%%%%%%%%%%%%%%%%%%%%%%%%%%%%%%%%%%%%%%%%%%%%%%%%%%%%%%%
%%%%%%%%%%%%%%%%%%%%%%%%%%%%%%%%%%%%%%%%%%%%%%%%%%%%%%%%%%%%%%%

%%%%%%%%%%%%%%%%%%%%%%%%%%%%%%%%%%%%%%%%%%%%%%%%%%%%%%%%%%%%%%%
%%%%%%%%%%%%%%%%%%%%%%%%%%%%%%%%%%%%%%%%%%%%%%%%%%%%%%%%%%%%%%%
%%%%%%%%%%%%%%%%%%%%%%%%%%%%%%%%%%%%%%%%%%%%%%%%%%%%%%%%%%%%%%%
%%%%%%%%%%%%%%%%%%%%%%%%%%%%%%%%%%%%%%%%%%%%%%%%%%%%%%%%%%%%%%%

\subsection{The model with positive weights}
\label{sec:comparison}

%%%%%%%%%%%%%%%%%%%%%%%%%%%%%%%%%%%%%%%%%%%%%%%%%%%%%%%%%%%%%%%
%%%%%%%%%%%%%%%%%%%%%%%%%%%%%%%%%%%%%%%%%%%%%%%%%%%%%%%%%%%%%%%
%%%%%%%%%%%%%%%%%%%%%%%%%%%%%%%%%%%%%%%%%%%%%%%%%%%%%%%%%%%%%%%
%%%%%%%%%%%%%%%%%%%%%%%%%%%%%%%%%%%%%%%%%%%%%%%%%%%%%%%%%%%%%%%
Let us recall the phase diagram of directed polymers with positive random weights on the tree.
We let $\{\omega(x):\, x\in\T\}$ be an i.i.d. family of real-valued random variables such that
\begin{eqnarray*}
	e^{\lambda(\beta)}
	=
	\esp
	\left[ e^{\beta \omega} \right]<\infty,
\end{eqnarray*}
for all $\beta>0$. Let $\eta_{\beta}(x)=e^{\beta \omega(x)}$ and
\begin{eqnarray*}
	\pf_n(\eta_{\beta})
	=
	\sum_{\walk\in\walks_n}
	\prod^n_{t=1} \eta_{\beta}(\walk_t)
	=
	\sum_{\walk\in\walks_n}
	\prod^n_{t=1} e^{\beta \omega(\walk_t)}.
\end{eqnarray*}
Let $\beta_{c}>0$ be the (unique) root of the equation
\begin{eqnarray*}
	\beta \lambda'(\beta)-\lambda(\beta)=\ln b,
\end{eqnarray*}
if it exists. Otherwise, we set $\beta_c=\infty$. The following result is taken from \cite{BPP} (see also \cite{Franchi, KP}).
%%%%%%%%%%%%%%%%%%%%%%%%%%%%%%%%%%%%%%%%%%%%%%%%%%%%%%%%%%%%%%%
%%%%%%%%%%%%%%%%%%%%%%%%%%%%%%%%%%%%%%%%%%%%%%%%%%%%%%%%%%%%%%%
\begin{theorem}
	For every $\beta\geq 0$, the limit
	\begin{eqnarray*}
		\free(\eta_{\beta})
		=
		\lim_{n\to\infty} \frac{1}{n} \ln \pf_n
		\left( \eta_{\beta} \right),
	\end{eqnarray*}
	exists $\p$-almost surely and
	\begin{eqnarray*}
		\free(\eta_{\beta})
		=
		\left\{
			\begin{array}{ll}
				\ln b + \lambda(\beta), & \text{if}\,\,\, 0\leq \beta \leq \beta_c,
				\\
				\beta \lambda'(\beta_c), & \text{if}\,\,\, \beta>\beta_c.
			\end{array}
		\right.
	\end{eqnarray*}
\end{theorem}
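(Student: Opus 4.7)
The plan is to handle the two regimes separately: a martingale/multiplicative-cascade argument in the weak-disorder phase $0\le\beta\le\beta_c$, and a fractional-moment upper bound matched with a branching random walk (BRW) lower bound in the strong-disorder phase $\beta>\beta_c$.

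\textbf{Weak disorder $(0\le \beta\le \beta_c)$.} I would introduce the non-negative martingale
\begin{eqnarray*}
	\mart_n(\eta_\beta) = \frac{\pf_n(\eta_\beta)}{(b e^{\lambda(\beta)})^n},
\end{eqnarray*}
which, by the recursive structure of the tree, satisfies the multiplicative cascade identity $\mart_{n+1}(\eta_\beta) = \tfrac{1}{b}\sum_{i=1}^b \tfrac{\eta_\beta(e_i)}{e^{\lambda(\beta)}}\,\mart_n^{(i)}(\eta_\beta)$, where $e_1,\ldots,e_b$ are the children of the root and the $\mart_n^{(i)}$ are i.i.d. copies of $\mart_n$ built from the $b$ sub-trees. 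By the martingale convergence theorem, $\mart_n\to \mart_\infty\ge 0$ almost surely. If I can show $\mart_\infty>0$ almost surely, then
\begin{eqnarray*}
	\frac{1}{n}\ln \pf_n(\eta_\beta) = \ln b + \lambda(\beta) + \frac{1}{n}\ln \mart_n(\eta_\beta) \longrightarrow \ln b + \lambda(\beta), \qquad \p\text{-a.s.}
\end{eqnarray*}
The positivity of $\mart_\infty$ is exactly the content of the Kahane--Peyrière/Biggins criterion for multiplicative cascades: uniform integrability of $(\mart_n)$ is equivalent to $\beta\lambda'(\beta)-\lambda(\beta)<\ln b$ together with $\esp[\mart_1\log^+\mart_1]<\infty$. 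The first condition is precisely $\beta<\beta_c$, and the second is immediate from the finiteness of exponential moments of $\omega$. A Kolmogorov $0$--$1$ argument upgrades $\esp[\mart_\infty]=1$ into $\{\mart_\infty>0\}$ having full measure. The critical case $\beta=\beta_c$ is then handled by continuity, using convexity of $\free(\eta_\beta)$ in $\beta$ to sandwich it between the two expressions on either side of $\beta_c$ (which agree at $\beta_c$ since $\beta_c \lambda'(\beta_c) = \ln b + \lambda(\beta_c)$ by definition of $\beta_c$).

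\textbf{Strong disorder $(\beta>\beta_c)$.} The upper bound follows from the fractional-moment estimate $(\sum_i x_i)^\alpha\le \sum_i x_i^\alpha$ valid for $x_i\ge 0$ and $\alpha\in(0,1]$, which yields
\begin{eqnarray*}
	\esp[\pf_n(\eta_\beta)^\alpha]\le b^n\,\esp[e^{\alpha\beta\omega}]^n = (b e^{\lambda(\alpha\beta)})^n.
\end{eqnarray*}
Markov's inequality and Borel--Cantelli then give $\limsup_n \frac{1}{n}\ln\pf_n(\eta_\beta)\le (\ln b + \lambda(\alpha\beta))/\alpha$ almost surely for every $\alpha\in(0,1]$. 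Choosing $\alpha=\beta_c/\beta\in(0,1)$ minimizes the bound, since the function $\gamma\mapsto (\ln b+\lambda(\gamma))/\gamma$ attains its minimum $\lambda'(\beta_c)$ at $\gamma=\beta_c$ by definition of $\beta_c$; this gives $\limsup_n \frac{1}{n}\ln\pf_n(\eta_\beta)\le \beta\lambda'(\beta_c)$ a.s. For the matching lower bound, I would use the elementary estimate $\pf_n(\eta_\beta)\ge e^{\beta M_n}$, where $M_n = \max_{\walk\in\walks_n}\sum_{t=1}^n \omega(\walk_t)$ is the maximal displacement of the BRW on $\T$. The classical Hammersley--Kingman--Biggins speed theorem states that $M_n/n\to \lambda'(\beta_c)$ almost surely, which immediately yields $\liminf_n \frac{1}{n}\ln\pf_n(\eta_\beta)\ge \beta\lambda'(\beta_c)$.

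\textbf{Main obstacle.} The most substantial input is the non-degeneracy $\mart_\infty>0$ almost surely up to $\beta_c$: a naive $L^2$ bound only covers the range $\beta<\beta_c/2$ (cf.\ the $\mathbb{L}^2$ region of Proposition~\ref{thm:uniform-integrability}), so the full strength of the Kahane--Peyrière/Biggins theory is genuinely required on $[\beta_c/2,\beta_c)$. The boundary point $\beta=\beta_c$ is also delicate — at criticality $\mart_\infty=0$ a.s. — and is most cleanly treated via convexity/continuity rather than a direct limit argument.
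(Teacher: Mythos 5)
The paper does not prove this theorem: it is recalled as a background result with an explicit citation to Buffet--Patrick--Pul\'e \cite{BPP} (see also \cite{Franchi, KP}), so there is no in-paper argument to compare against. Your proposal is, in essence, the standard proof from that literature and is correct in its main lines: the Biggins/Kahane--Peyri\`ere criterion for non-degeneracy of the normalized cascade martingale in the weak-disorder range, the fractional-moment bound $\esp[\pf_n^\alpha]\le (be^{\lambda(\alpha\beta)})^n$ optimized at $\alpha=\beta_c/\beta$ for the strong-disorder upper bound, and the branching random walk speed theorem for the matching lower bound; the computation $\beta\lambda'(\beta_c)=\beta\cdot\frac{\ln b + \lambda(\beta_c)}{\beta_c}$ correctly identifies the two expressions at $\beta_c$.

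Two small points of rigor. First, your invocation of ``a Kolmogorov $0$--$1$ argument'' to conclude $\mart_\infty>0$ a.s.\ is not quite the right mechanism: the usual route is the cascade fixed-point identity $\p[\mart_\infty=0]=\p[\mart_\infty=0]^b$, valid because the weights $\eta_\beta(e_i)$ are a.s.\ strictly positive, which forces $\p[\mart_\infty=0]\in\{0,1\}$; together with $\esp[\mart_\infty]=1$ this gives the claim (this is exactly the device the paper employs, in the complex setting, in the proof of Lemma~\ref{thm:zero-one-law}). Second, the treatment of the critical point $\beta=\beta_c$ by convexity is sound but needs one extra sentence: one should establish a.s.\ convergence simultaneously over a countable dense set of $\beta$'s, then use that each $\beta\mapsto\frac{1}{n}\ln\pf_n(\eta_\beta)$ is convex to deduce convergence and continuity of the limit at $\beta_c$. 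With those two refinements made explicit the argument is complete and matches the cited references.
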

%%%%%%%%%%%%%%%%%%%%%%%%%%%%%%%%%%%%%%%%%%%%%%%%%%%%%%%%%%%%%%%
%%%%%%%%%%%%%%%%%%%%%%%%%%%%%%%%%%%%%%%%%%%%%%%%%%%%%%%%%%%%%%%

Let us recast all this in our language to understand the behaviour of the model with positive weights $|\xi|=\left\{ \left| \xi(x) \right|:\, x\in\mathbb{T} \right\}$. This will be instrumental in our proofs. We let
\begin{eqnarray*}
	\pf_n \left( |\xi| \right)
	=
	\sum_{\walk\in\walks_n}
	\prod^n_{t=1} |\xi(\walk_t)|,
	\qquad
	\pf_n ( \left| \xi \right|^2 )
	=
	\sum_{\walk\in\walks_n}
	\prod^n_{t=1} |\xi(\walk_t)|^2,
\end{eqnarray*}
and note that these correspond to partition functions of directed polymers in real-valued environments.
We write $|\xi(x)|=e^{\omega(x)}$, we let $\lambda(\beta)=\ln \esp \left[ \left| \xi \right|^{\beta} \right]$ be as above and we let $\beta_c$ be the critical inverse temperature for the model with weights $|\xi(x)|^{\beta}=e^{\beta \omega(x)}$. 
Hence,
\begin{eqnarray*}
	G(\alpha)
	=
	\frac{1}{\alpha} \ln\left( b\esp\left[ \left| \xi \right|^{\alpha}\right]\right)
	=
	\frac{\ln b + \lambda(\alpha)}{\alpha},
	\qquad
	G'(\alpha)
	=
	\frac{\alpha \lambda'(\alpha)-\lambda(\alpha)-\ln b}{\alpha^2}.
\end{eqnarray*}
In particular, $\amin=\beta_c$ and $G(\amin)=\lambda'(\beta_c)$. Note that the model with positive weights $|\xi(x)|^{\beta}$ is in the weak disorder regime if and only if $G'(\beta)<0$.

In the region $\mathcal{R}1$, there exists $\alpha>1$ such that $G(\alpha)<\ln(b|\esp[\xi]|)$. In particular, $G(\alpha)<\ln(b\esp[|\xi|])=G(1)$ and $G'(1)<0$. Hence, the model with positive weights $|\xi|$ is in the weak disorder regime and
\begin{eqnarray*}
		\free(|\xi|)
		=
		\lim_{n\to\infty} \frac{1}{n} \ln \pf_n(|\xi|)
		=
		\ln b + \ln \esp[|\xi|].
\end{eqnarray*}

Let us consider the region $\mathcal{R}2$.
If $\amin<1$, then $G'(1)>0$, the model with positive weights $|\xi|$ is in the strong disorder regime and
\begin{eqnarray*}
		\free(|\xi|)
		=
		\lim_{n\to\infty} \frac{1}{n} \ln \pf_n(|\xi|)
		=
		\lambda'(\beta_c)=G(\amin).
\end{eqnarray*}
The remaining piece of the region $\mathcal{R}2$ corresponds to $1\leq \amin < 2$ and $G(\amin)>\ln(b|\esp[\xi]|)$. In particular, $G'(1)<0$ and $G'(2)>0$, which means that the model with positive weights $|\xi|$ (resp. $|\xi|^2=e^{2\omega}$) is in the weak disorder regime (resp. strong disorder regime) and
\begin{align*}
		\free(|\xi|)
		&=
		\lim_{n\to\infty} \frac{1}{n} \ln \pf_n \left( \left|\xi \right| \right)
		=
		\ln b + \ln \esp[|\xi|],
		\\
		\free(|\xi|^2)
		&=
		\lim_{n\to\infty} \frac{1}{n} \ln \pf_n ( \left|\xi \right|^2 )
		=
		2G(\amin).
\end{align*}
Finally, in the region $\mathcal{R}3$, it holds that $\amin>2$ which implies that $G'(1)<0$ and $G'(2)<0$, so that the models with positive weights $|\xi|$ and $|\xi|^2$ are in the weak disorder regime and, as a consequence,
\begin{align*}
		\free(|\xi|)
		&=
		\lim_{n\to\infty} \frac{1}{n} \ln \pf_n (|\xi|)
		=
		\ln b + \ln \esp[|\xi|],
		\\
		\free ( \left|\xi \right|^2 )
		&=
		\lim_{n\to\infty} \frac{1}{n} \ln \pf_n ( \left|\xi \right|^2 )
		=
		\ln b + \ln \esp \left[ \left|\xi \right|^2 \right].
\end{align*}

%%%%%%%%%%%%%%%%%%%%%%%%%%%%%%%%%%%%%%%%%%%%%%%%%%%%%%%%%%%%%%%
%%%%%%%%%%%%%%%%%%%%%%%%%%%%%%%%%%%%%%%%%%%%%%%%%%%%%%%%%%%%%%%

%%%%%%%%%%%%%%%%%%%%%%%%%%%%%%%%%%%%%%%%%%%%%%%%%%%%%%%%%%%%%%%
%%%%%%%%%%%%%%%%%%%%%%%%%%%%%%%%%%%%%%%%%%%%%%%%%%%%%%%%%%%%%%%
%%%%%%%%%%%%%%%%%%%%%%%%%%%%%%%%%%%%%%%%%%%%%%%%%%%%%%%%%%%%%%%
%%%%%%%%%%%%%%%%%%%%%%%%%%%%%%%%%%%%%%%%%%%%%%%%%%%%%%%%%%%%%%%

\subsection{Structure of the article}

%%%%%%%%%%%%%%%%%%%%%%%%%%%%%%%%%%%%%%%%%%%%%%%%%%%%%%%%%%%%%%%
%%%%%%%%%%%%%%%%%%%%%%%%%%%%%%%%%%%%%%%%%%%%%%%%%%%%%%%%%%%%%%%
%%%%%%%%%%%%%%%%%%%%%%%%%%%%%%%%%%%%%%%%%%%%%%%%%%%%%%%%%%%%%%%
%%%%%%%%%%%%%%%%%%%%%%%%%%%%%%%%%%%%%%%%%%%%%%%%%%%%%%%%%%%%%%%
Section \ref{sec:general-estimates} contains basic estimates that will be used along the proofs. In particular, Sections \ref{sec:lower-tails} and \ref{sec:as-lower-bounds} contain crucial lower tail bounds on the partition function. In Section \ref{sec:generic}, we present the general scheme of proof used to study the regions $\mathcal{R}2$ and $\mathcal{R}3$.
Sections \ref{sec:R1}, \ref{sec:R2} and \ref{sec:R3} contain the parts of the proofs of Theorems \ref{thm:main} and \ref{thm:main-as} corresponding to the regions $\mathcal{R}1$, $\mathcal{R}2$ and $\mathcal{R}3$ respectively.
The treatment of $\mathcal{R}2$ is further divided in two cases: $\amin\leq 1$ and $1<\amin\leq 2$.
In Section \ref{sec:independent}, we specialize to independent radii and phases. Finally, the Appendix \ref{app:PZ} contains a version of the Paley-Zygmund inequality used along the proofs.

%%%%%%%%%%%%%%%%%%%%%%%%%%%%%%%%%%%%%%%%%%%%%%%%%%%%%%%%%%%%%%%
%%%%%%%%%%%%%%%%%%%%%%%%%%%%%%%%%%%%%%%%%%%%%%%%%%%%%%%%%%%%%%%
%%%%%%%%%%%%%%%%%%%%%%%%%%%%%%%%%%%%%%%%%%%%%%%%%%%%%%%%%%%%%%%
%%%%%%%%%%%%%%%%%%%%%%%%%%%%%%%%%%%%%%%%%%%%%%%%%%%%%%%%%%%%%%%
%%%%%%%%%%%%%%%%%%%%%%%%%%%%%%%%%%%%%%%%%%%%%%%%%%%%%%%%%%%%%%%
%%%%%%%%%%%%%%%%%%%%%%%%%%%%%%%%%%%%%%%%%%%%%%%%%%%%%%%%%%%%%%%
%%%%%%%%%%%%%%%%%%%%%%%%%%%%%%%%%%%%%%%%%%%%%%%%%%%%%%%%%%%%%%%
%%%%%%%%%%%%%%%%%%%%%%%%%%%%%%%%%%%%%%%%%%%%%%%%%%%%%%%%%%%%%%%
%%%%%%%%%%%%%%%%%%%%%%%%%%%%%%%%%%%%%%%%%%%%%%%%%%%%%%%%%%%%%%%
%%%%%%%%%%%%%%%%%%%%%%%%%%%%%%%%%%%%%%%%%%%%%%%%%%%%%%%%%%%%%%%
%%%%%%%%%%%%%%%%%%%%%%%%%%%%%%%%%%%%%%%%%%%%%%%%%%%%%%%%%%%%%%%
%%%%%%%%%%%%%%%%%%%%%%%%%%%%%%%%%%%%%%%%%%%%%%%%%%%%%%%%%%%%%%%
%%%%%%%%%%%%%%%%%%%%%%%%%%%%%%%%%%%%%%%%%%%%%%%%%%%%%%%%%%%%%%%
%%%%%%%%%%%%%%%%%%%%%%%%%%%%%%%%%%%%%%%%%%%%%%%%%%%%%%%%%%%%%%%
%%%%%%%%%%%%%%%%%%%%%%%%%%%%%%%%%%%%%%%%%%%%%%%%%%%%%%%%%%%%%%%
%%%%%%%%%%%%%%%%%%%%%%%%%%%%%%%%%%%%%%%%%%%%%%%%%%%%%%%%%%%%%%%

\section{General estimates}
\label{sec:general-estimates}

%%%%%%%%%%%%%%%%%%%%%%%%%%%%%%%%%%%%%%%%%%%%%%%%%%%%%%%%%%%%%%%
%%%%%%%%%%%%%%%%%%%%%%%%%%%%%%%%%%%%%%%%%%%%%%%%%%%%%%%%%%%%%%%
%%%%%%%%%%%%%%%%%%%%%%%%%%%%%%%%%%%%%%%%%%%%%%%%%%%%%%%%%%%%%%%
%%%%%%%%%%%%%%%%%%%%%%%%%%%%%%%%%%%%%%%%%%%%%%%%%%%%%%%%%%%%%%%
%%%%%%%%%%%%%%%%%%%%%%%%%%%%%%%%%%%%%%%%%%%%%%%%%%%%%%%%%%%%%%%
%%%%%%%%%%%%%%%%%%%%%%%%%%%%%%%%%%%%%%%%%%%%%%%%%%%%%%%%%%%%%%%
%%%%%%%%%%%%%%%%%%%%%%%%%%%%%%%%%%%%%%%%%%%%%%%%%%%%%%%%%%%%%%%
%%%%%%%%%%%%%%%%%%%%%%%%%%%%%%%%%%%%%%%%%%%%%%%%%%%%%%%%%%%%%%%

\subsection{Basic recursions}
\label{sec:basic-recursions}

%%%%%%%%%%%%%%%%%%%%%%%%%%%%%%%%%%%%%%%%%%%%%%%%%%%%%%%%%%%%%%%
%%%%%%%%%%%%%%%%%%%%%%%%%%%%%%%%%%%%%%%%%%%%%%%%%%%%%%%%%%%%%%%
%%%%%%%%%%%%%%%%%%%%%%%%%%%%%%%%%%%%%%%%%%%%%%%%%%%%%%%%%%%%%%%
%%%%%%%%%%%%%%%%%%%%%%%%%%%%%%%%%%%%%%%%%%%%%%%%%%%%%%%%%%%%%%%
Recall that, for $n\geq 0$, $\T_{\leq n}=\cup^n_{j=1}\T_j$ and $\calf_n = \sigma(\xi(x):\, x\in\T_{\leq n})$. 
We begin with a simple lemma.
%%%%%%%%%%%%%%%%%%%%%%%%%%%%%%%%%%%%%%%%%%%%%%%%%%%%%%%%%%%%%%%
%%%%%%%%%%%%%%%%%%%%%%%%%%%%%%%%%%%%%%%%%%%%%%%%%%%%%%%%%%%%%%%
\begin{lemma}
	\label{thm:basic-conditioning}
	For all $n\geq 1$, it holds that
	\begin{align}
		\label{eq:cond-z1}
		\esp
		\left[
		\left.
			\pf_{n+1}(\xi)
		\right|
		\calf_n
		\right]	
		&=
		b\mom_1\pf_n(\xi),
		\\
		\label{eq:cond-z2}
		\esp
		\left[
		\left.
			\left| \pf_{n+1}(\xi) \right|^2
		\right|
		\calf_n
		\right]	
		&=
		b^2 |\mom_1|^2|\pf_{n}(\xi)|^2
		+
		b \sigma^2
		\pf_n ( \left| \xi \right|^2),
	\end{align}
	where $\mom_1 = \esp[\xi]$ and $\sigma^2 = \esp [ \left| \xi \right|^2 ]-|\esp[\xi]|^2$.
\end{lemma}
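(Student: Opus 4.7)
The plan is to exploit the tree structure: every path of length $n+1$ factorizes as a path of length $n$ ending at some $x\in\T_n$ followed by a single step to one of the $b$ children of $x$. So the first step is to write, for $x\in\T_n$, the branch weight
\begin{eqnarray*}
	W_n(x)
	=
	\sum_{\walk\in\walks_n: \walk_n=x}
	\prod_{t=1}^n \xi(\walk_t),
\end{eqnarray*}
which is $\calf_n$-measurable (in fact on a tree this sum has a single term, $\prod_{t=1}^n \xi(\walk_t)$ along the unique ancestral path to $x$), and observe that $\pf_n(\xi)=\sum_{x\in\T_n} W_n(x)$ and
\begin{eqnarray*}
	\pf_{n+1}(\xi)
	=
	\sum_{x\in\T_n} W_n(x) \sum_{y\in c(x)} \xi(y),
\end{eqnarray*}
where $c(x)\subset\T_{n+1}$ denotes the set of $b$ children of $x$. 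Crucially, the variables $\{\xi(y):\, y\in\T_{n+1}\}$ are i.i.d.\ and independent of $\calf_n$, and for $x\neq x'$ in $\T_n$ the child sets $c(x)$ and $c(x')$ are disjoint.

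For the first identity \eqref{eq:cond-z1}, I would simply condition on $\calf_n$, pull out $W_n(x)$, and use $\esp[\xi(y)]=\mom_1$ together with $|c(x)|=b$ to obtain $\esp[\pf_{n+1}(\xi)\mid \calf_n]=b\mom_1\pf_n(\xi)$.

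For \eqref{eq:cond-z2}, I expand
\begin{eqnarray*}
	|\pf_{n+1}(\xi)|^2
	=
	\sum_{x,x'\in\T_n}
	W_n(x)\overline{W_n(x')}
	\sum_{y\in c(x),\, y'\in c(x')} \xi(y)\overline{\xi(y')},
\end{eqnarray*}
and take conditional expectation. By independence, $\esp[\xi(y)\overline{\xi(y')}]=|\mom_1|^2$ if $y\neq y'$ and equals $\esp[|\xi|^2]=|\mom_1|^2+\sigma^2$ if $y=y'$. When $x\neq x'$ the child sets are disjoint, so every pair is off-diagonal and the inner double sum equals $b^2|\mom_1|^2$; when $x=x'$ the $b$ diagonal terms contribute $b\sigma^2$ on top of the $b^2|\mom_1|^2$ coming from the full $b\times b$ block. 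Collecting terms yields
\begin{eqnarray*}
	\esp\bigl[|\pf_{n+1}(\xi)|^2 \bigm| \calf_n\bigr]
	=
	b^2|\mom_1|^2 \sum_{x,x'\in\T_n} W_n(x)\overline{W_n(x')}
	+
	b\sigma^2 \sum_{x\in\T_n}|W_n(x)|^2,
\end{eqnarray*}
and the first sum is $|\pf_n(\xi)|^2$ while the second, using that each $x\in\T_n$ corresponds to a unique path so $|W_n(x)|^2=\prod_{t=1}^n|\xi(\walk_t)|^2$, equals $\pf_n(|\xi|^2)$.

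There is no real obstacle here; the only care needed is in the bookkeeping of the diagonal-vs-off-diagonal terms in the double sum, where one must avoid double-counting and must use the tree property that $c(x)\cap c(x')=\emptyset$ for $x\neq x'$ to ensure that mixed terms from different ancestors only contribute the annealed $|\mom_1|^2$ factor. The identification of $\sum_x |W_n(x)|^2$ with $\pf_n(|\xi|^2)$ is specific to the tree (it would fail on a lattice where distinct paths may share edges) and is what makes \eqref{eq:cond-z2} so clean.
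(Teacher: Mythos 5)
Your proof is correct and takes essentially the same route as the paper: both condition on $\calf_n$, expand $|\pf_{n+1}|^2$ as a double sum over pairs, integrate out the i.i.d.\ last-generation weights, and separate diagonal from off-diagonal contributions using the tree property that distinct $n$-paths have disjoint sets of children. Your branch-weight notation $W_n(x)$ organizes the bookkeeping a bit more compactly than the paper's three-way split on pairs of $(n+1)$-paths (equal / same $n$-prefix / different $n$-prefix), but the underlying computation is identical.
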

%%%%%%%%%%%%%%%%%%%%%%%%%%%%%%%%%%%%%%%%%%%%%%%%%%%%%%%%%%%%%%%
%%%%%%%%%%%%%%%%%%%%%%%%%%%%%%%%%%%%%%%%%%%%%%%%%%%%%%%%%%%%%%%
\begin{proof}
	The proof of the first identity is straightforward. Next,
	\begin{align*}
	|\pf_{n+1}(\xi)|^2 
	&=
	\sum_{\walk\in\walks_{n+1}} 
		\prod^{n+1}_{t=1} |\xi(\walk_t)|^2
	+
	\sum_{\substack{\walk,\walk'\in\walks_{n+1} \\ \walk\neq\walk'}}
		\prod^{n+1}_{t=1} \xi(\walk_t) \overline{\xi(\walk'_t)}
	\\
	&=
	\pf_{n+1} ( \left| \xi \right|^2 )
	+
	\sum_{\substack{\walk\neq\walk'\in\walks_{n+1} \\ \walk_{[0,n]}=\walk'_{[0,n]}}}
		\xi(\walk_{n+1})\overline{\xi(\walk'_{n+1})}
		\prod^{n}_{t=1} |\xi(\walk_t)|^2
	\\
	&\quad
	+
	\sum_{\substack{\walk\neq\walk'\in\walks_{n+1} \\ \walk_{[0,n]}\neq\walk'_{[0,n]}}}
	\xi(\walk_{n+1})\overline{\xi(\walk'_{n+1})}
		\prod^{n}_{t=1} \xi(\walk_t) \overline{\xi(\walk'_t)},
\end{align*}
where $\walk_{[0,n]}=(\walk_0, \walk_1, \ldots, \walk_n)$. Hence,
\begin{align*}
	\esp
	\left[
		\left.
		\left| \pf_{n+1}(\xi) \right|^2 \right| \mathcal{F}_n
	\right]
	&=
	b \esp \left[ \left| \xi \right|^2 \right]
	\pf_n ( \left| \xi \right|^2 )
	+
	b(b-1)\left| \esp[\xi] \right|^2
	\sum_{\walk\in\walks_{n}}
		\prod^{n}_{t=1} |\xi(\walk_t)|^2
	\\
	&\quad
	+
	b^2\left| \esp[\xi] \right|^2
	\sum_{\walk\neq\walk'\in\walks_{n}}
		\prod^{n}_{t=1} \xi(\walk_t) \overline{\xi(\walk'_t)}
	\\
	&=
	b\left(
		\esp \left[ \left| \xi \right|^2 \right]-\left| \esp[\xi] \right|^2
	\right)
	\pf_n ( \left| \xi \right|^2 )
	+
	b^2\left| \esp[\xi] \right|^2
	|\pf_n(\xi)|^2
	\\
	&=
	b\sigma^2
	\pf_n ( \left| \xi \right|^2 )
	+
	b^2 |\mom_1|^2
	|\pf_n(\xi)|^2. \qedhere
\end{align*}
\end{proof}
%%%%%%%%%%%%%%%%%%%%%%%%%%%%%%%%%%%%%%%%%%%%%%%%%%%%%%%%%%%%%%%
%%%%%%%%%%%%%%%%%%%%%%%%%%%%%%%%%%%%%%%%%%%%%%%%%%%%%%%%%%%%%%%
Iterating \eqref{eq:cond-z1}, we easily get 
\begin{eqnarray*}
	\esp[\pf_n(\xi)]
	=
	b^n \mom_1^n.
\end{eqnarray*}
Using \eqref{eq:cond-z2}, we get the recursion
\begin{eqnarray*}
	\esp \left[ \left| \pf_{n} \right|^2 \right]
	=
	b^2|\mom_1|^2\esp \left[ \left| \pf_{n-1} \right|^2 \right]
	+
	b\sigma^2 \esp \left[ \pf_{n-1} ( \left| \xi \right|^2 ) \right]
	=
	b^2|\mom_1|^2\esp \left[ \left| \pf_{n-1} \right|^2 \right]
	+
	b^{n}\sigma^2 \esp \left[ \left| \xi \right|^2 \right]^{n-1}.
\end{eqnarray*}
Let $\bmom_2=\esp \left[ \left| \xi \right|^2 \right]$.
Inductively,
\begin{eqnarray*}
	\esp \left[ \left| \pf_{n} \right|^2 \right]
	&=&
	b^n \left( b \left| \mom_1 \right|^{2} \right)^n
	+
	\sigma^2
	b^n \bmom_2^{n-1}
	\sum^{n-1}_{j=0}
	\left(
		\frac{b \left| \mom_1 \right|^2}{\bmom_2}
	\right)^j.
\end{eqnarray*}
If $b|\mom_1|^2 < \bmom_2$, the sum is bounded and we have 
\begin{eqnarray*}
	\lim_{n\to\infty}
	\frac{1}{n}\ln \esp \left[ \left| \pf_n(\xi) \right|^2 \right]
	=
	\ln\left(
		b \bmom_2
	\right).
\end{eqnarray*}
Now, if $b|\mom_1|^2>\bmom_2$, the sum growths as $(b |\mom_1|^2)^n$ and
\begin{eqnarray*}
	\lim_{n\to\infty}
	\frac{1}{n}\ln \esp \left[ \left| \pf_n(\xi) \right|^2 \right]
	=
	\ln\left(
		b^2 \left| \mom_1 \right|^2
	\right).
\end{eqnarray*}
Finally, if $b|\mom_1|^2=\bmom_2$, the sum growths linearly and
\begin{eqnarray*}
	\lim_{n\to\infty}
	\frac{1}{n}\ln \esp \left[ \left| \pf_n(\xi) \right|^2 \right]
	=
	\ln\left(
		b \bmom_2
	\right)
	=
	\ln\left(
		b^2 \left| \mom_1 \right|^2
	\right).
\end{eqnarray*}
Let us summarize our discussion.
%%%%%%%%%%%%%%%%%%%%%%%%%%%%%%%%%%%%%%%%%%%%%%%%%%%%%%%%%%%%%%%
%%%%%%%%%%%%%%%%%%%%%%%%%%%%%%%%%%%%%%%%%%%%%%%%%%%%%%%%%%%%%%%
\begin{lemma}
	If $b|\esp[\xi]|^2>\esp \left[ \left| \xi \right|^2 \right]$,
	\begin{eqnarray*}
		\lim_{n\to\infty}
		\frac{1}{n}\ln \esp \left[ \left| \pf_n(\xi) \right|^2 \right]
		=
		\ln\left(
			b^2 \left| \esp[\xi] \right|^2
		\right).
	\end{eqnarray*}
	If $b|\esp[\xi]|^2<\esp \left[ \left| \xi \right|^2 \right]$,
	\begin{eqnarray*}
		\lim_{n\to\infty}
		\frac{1}{n}\ln \esp \left[ \left| \pf_n(\xi) \right|^2 \right]
		=
		\ln\left(
			b \esp \left[ \left| \xi \right|^2 \right]
		\right).
	\end{eqnarray*}
	If $b|\esp[\xi]|^2=\esp \left[ \left| \xi \right|^2 \right]$,
	\begin{eqnarray*}
		\lim_{n\to\infty}
		\frac{1}{n}\ln \esp \left[ \left| \pf_n(\xi) \right|^2 \right]
		=
		\ln\left(
			b^2 \left| \esp[\xi] \right|^2
		\right)
		=
		\ln\left(
			b \esp \left[ \left| \xi \right|^2 \right]
		\right).
	\end{eqnarray*}
\end{lemma}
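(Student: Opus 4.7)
The plan is to read off the result directly from the explicit recursive computation carried out in the discussion immediately preceding the lemma; the proof amounts to a case analysis of the closed-form expression that has already been derived, organized according to the relative size of $b|\esp[\xi]|^2$ and $\bmom_2 = \esp[|\xi|^2]$.

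Concretely, I would start from the closed form
\begin{eqnarray*}
	\esp \left[ |\pf_n(\xi)|^2 \right]
	=
	(b^2|\mom_1|^2)^n
	+
	\sigma^2 b^n \bmom_2^{n-1}
	\sum_{j=0}^{n-1} \left( \frac{b|\mom_1|^2}{\bmom_2} \right)^j,
\end{eqnarray*}
which follows by iterating the conditional second-moment identity \eqref{eq:cond-z2} of Lemma \ref{thm:basic-conditioning} together with the elementary computation $\esp[\pf_k(|\xi|^2)]=(b\bmom_2)^k$, the latter coming from the first identity of the same lemma applied to the partition function associated to the environment $|\xi|^2$.

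Setting $r = b|\mom_1|^2/\bmom_2$, I would then distinguish three regimes, each matching one of the three cases in the statement. When $r>1$, the geometric sum is of order $r^n$ and a direct calculation shows the second term is of order $(b^2|\mom_1|^2)^n/\bmom_2$, matching the first term, so after dividing by $n$ and taking logarithms one obtains $\ln(b^2|\mom_1|^2)$. When $r<1$, the sum is bounded by the constant $1/(1-r)$ and the second term is of order $b^n \bmom_2^{n-1}$; since the inequality $r<1$ is equivalent to $b\bmom_2>b^2|\mom_1|^2$, this term dominates the first and produces $\ln(b\bmom_2)$. When $r=1$, every summand equals $1$ so the sum equals $n$, but since $b\bmom_2=b^2|\mom_1|^2$ the polynomial prefactor $n$ is washed out by the normalization $\frac{1}{n}\ln(\cdot)$ and one recovers the common value $\ln(b\bmom_2)=\ln(b^2|\mom_1|^2)$.

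There is no genuine obstacle in this argument, as it is a routine asymptotic inspection of the explicit formula. The only minor point worth verifying is that the degenerate subcases ($\sigma^2=0$ or $\mom_1=0$) remain consistent with the stated limits: in either situation exactly one of the two terms in the closed form vanishes identically, and the surviving term matches the prescribed asymptotic.
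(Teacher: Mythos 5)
Your proposal is correct and follows essentially the same route as the paper: iterate the conditional second-moment recursion from Lemma \ref{thm:basic-conditioning} to obtain the closed form, then read off the growth rate from a three-way case analysis on the ratio $r = b|\esp[\xi]|^2 / \esp[|\xi|^2]$. The only difference is cosmetic (you introduce the notation $r$ and you explicitly flag the degenerate subcases $\sigma^2=0$ and $\esp[\xi]=0$, which the paper handles implicitly).
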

%%%%%%%%%%%%%%%%%%%%%%%%%%%%%%%%%%%%%%%%%%%%%%%%%%%%%%%%%%%%%%%
%%%%%%%%%%%%%%%%%%%%%%%%%%%%%%%%%%%%%%%%%%%%%%%%%%%%%%%%%%%%%%%
Note that, if $\esp[\xi]=0$, only the second possibility can occur.

%%%%%%%%%%%%%%%%%%%%%%%%%%%%%%%%%%%%%%%%%%%%%%%%%%%%%%%%%%%%%%%
%%%%%%%%%%%%%%%%%%%%%%%%%%%%%%%%%%%%%%%%%%%%%%%%%%%%%%%%%%%%%%%
%%%%%%%%%%%%%%%%%%%%%%%%%%%%%%%%%%%%%%%%%%%%%%%%%%%%%%%%%%%%%%%
%%%%%%%%%%%%%%%%%%%%%%%%%%%%%%%%%%%%%%%%%%%%%%%%%%%%%%%%%%%%%%%
%%%%%%%%%%%%%%%%%%%%%%%%%%%%%%%%%%%%%%%%%%%%%%%%%%%%%%%%%%%%%%%
%%%%%%%%%%%%%%%%%%%%%%%%%%%%%%%%%%%%%%%%%%%%%%%%%%%%%%%%%%%%%%%
%%%%%%%%%%%%%%%%%%%%%%%%%%%%%%%%%%%%%%%%%%%%%%%%%%%%%%%%%%%%%%%
%%%%%%%%%%%%%%%%%%%%%%%%%%%%%%%%%%%%%%%%%%%%%%%%%%%%%%%%%%%%%%%

\subsection{Moments estimates}

%%%%%%%%%%%%%%%%%%%%%%%%%%%%%%%%%%%%%%%%%%%%%%%%%%%%%%%%%%%%%%%
%%%%%%%%%%%%%%%%%%%%%%%%%%%%%%%%%%%%%%%%%%%%%%%%%%%%%%%%%%%%%%%
%%%%%%%%%%%%%%%%%%%%%%%%%%%%%%%%%%%%%%%%%%%%%%%%%%%%%%%%%%%%%%%
%%%%%%%%%%%%%%%%%%%%%%%%%%%%%%%%%%%%%%%%%%%%%%%%%%%%%%%%%%%%%%%
Recall that
\begin{eqnarray*}
	G(\alpha)
	=
	\frac{1}{\alpha} 
	\ln\left(
		b\esp[|\xi|^{\alpha}]
	\right),
	\quad
	\alpha>0.
\end{eqnarray*}
We can easily verify that either $G$ has a unique critical point $\alpha_{\textsc{min}}$ (which, in this case, corresponds to the global minimizer) or $G$ is strictly decreasing.

Assume that $\esp[\xi]\neq 0$.
Let
\begin{eqnarray*}
	\mart_n(\xi)
	=
	\frac{\pf_n(\xi)}{\esp[\pf_n(\xi)]},
	\quad\quad
	\martx_n(\xi)
	=
	\frac{|\pf_n(\xi)|^2}{\esp \left[ \left| \pf_n(\xi) \right|^2 \right]}.
\end{eqnarray*}
%%%%%%%%%%%%%%%%%%%%%%%%%%%%%%%%%%%%%%%%%%%%%%%%%%%%%%%%%%%%%%%
%%%%%%%%%%%%%%%%%%%%%%%%%%%%%%%%%%%%%%%%%%%%%%%%%%%%%%%%%%%%%%%
\begin{lemma}
	\label{thm:moment-estimates}
	If $b|\esp[\xi]|^2>\esp \left[ \left| \xi \right|^2 \right]$, i.e., if $G(2)<\ln(b|\esp[\xi]|)$, then
	\begin{eqnarray*}
		\sup_{{n\geq 1}}
		\esp\left[
			\left|
				\mart_n(\xi)
			\right|^{\theta}
		\right]
		<\infty,
	\end{eqnarray*}	
	for all $\theta\in[0,2]$.
	
	If there exists $\theta\in(1,2)$ such that $G(\theta)<G(2)$, then
	\begin{eqnarray*}
		\sup_{{n\geq 1}}
		\esp\left[
			\left|
				\mart_n(\xi)
			\right|^{\theta}
		\right]
		<\infty.
	\end{eqnarray*}	
	If  $\amin>2$, there exists $\nu>1$ such that
	\begin{eqnarray*}
		\sup_{{n\geq 1}}
		\esp
		\left[
			\left|
				\martx_n(\xi)
			\right|^{\nu}
		\right]
		<\infty.
	\end{eqnarray*}	
\end{lemma}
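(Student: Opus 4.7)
For Part 1, the plan is to invoke the explicit second-moment computation carried out at the end of Section~\ref{sec:basic-recursions}: under $b|\esp[\xi]|^2 > \esp[|\xi|^2]$ (equivalently $G(2)<\ln(b|\esp[\xi]|)$), that result yields $\esp[|\pf_n|^2] = O((b|\esp[\xi]|)^{2n})$, hence $\sup_n \esp[|\mart_n|^2] < \infty$. The extension to arbitrary $\theta \in [0,2]$ is then immediate from Lyapunov's inequality $\esp[|\mart_n|^\theta]^{1/\theta} \leq \esp[|\mart_n|^2]^{1/2}$.

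For Part 2, I would exploit the branching self-similarity. Conditioning on the level-one environment, one has $\pf_{n+1} = \sum_{i=1}^b \xi(x_i)\pf_n^{(i)}$ with the $\pf_n^{(i)}$ iid copies of $\pf_n$ independent of $\xi(x_1), \dots, \xi(x_b)$, and hence $\mart_{n+1} = (b\mom_1)^{-1}\sum_i \xi(x_i)\mart_n^{(i)}$. Recentering around $\mom_1 = \esp[\xi(x_i)\mart_n^{(i)}]$ and applying the von Bahr--Esseen inequality to the iid centered summands $\xi(x_i)\mart_n^{(i)} - \mom_1$ produces a linear recursion of the form $a_{n+1} \leq \rho_\theta a_n + K_\theta$ with $a_n := \esp[|\mart_n|^\theta]$, where $\rho_\theta$ is proportional to $\esp[|\xi|^\theta]/(b^{\theta-1}|\mom_1|^\theta)$. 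The hypothesis $G(\theta) < G(2)$ forces $\rho_\theta < 1$, which yields the uniform bound by induction starting from $a_1 < \infty$.

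For Part 3, the hypothesis $\amin > 2$ places the positive-weight polymer with environment $|\xi|^2$ strictly inside the weak-disorder regime recalled in Section~\ref{sec:comparison}, since its critical inverse temperature equals $\amin/2 > 1$. By the classical Kahane--Peyri\`ere/Biggins theory for positive multiplicative cascades on regular trees, there exists $\nu > 1$ (one may take any $\nu \in (1,\amin/2)$) for which $\sup_n \esp[(\pf_n(|\xi|^2)/\esp[\pf_n(|\xi|^2)])^\nu] < \infty$. To transfer this bound to $\martx_n = |\pf_n|^2/\esp[|\pf_n|^2]$, I would iterate the conditional identity \eqref{eq:cond-z2} from Lemma~\ref{thm:basic-conditioning}: unrolling the recursion represents $|\pf_n|^2$ as a sum of a martingale term and a linear combination of $\pf_k(|\xi|^2)$ at intermediate levels, after which Minkowski and the $L^\nu$-bound for the positive cascade yield $\sup_n \esp[|\martx_n|^\nu] < \infty$.

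The main obstacle I expect is the transfer step in Part 3: the naive Cauchy--Schwarz estimate $|\pf_n|^2 \leq b^n \pf_n(|\xi|^2)$ introduces a wasteful exponential factor $b^n$ and must be avoided, so the argument has to exploit the full recursive structure of $|\pf_n|^2$, for instance by decomposing pairs of walks according to the length of their common prefix and recognizing the resulting terms as nested cascades. A secondary subtlety is tuning the constants in the Part 2 recursion so that the contraction threshold genuinely matches the sharp condition $G(\theta) < G(2)$ rather than a strictly stronger moment inequality; if the bare von Bahr--Esseen bound is not sharp enough, a refined Topchii--Vatutin style branching inequality should suffice.
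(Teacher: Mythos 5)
Your Part 1 is correct and, in fact, cleaner than what the paper does: the explicit formula for $\esp\bigl[|\pf_n|^2\bigr]$ derived at the end of Section~\ref{sec:basic-recursions} already shows $\esp\bigl[|\pf_n|^2\bigr]=O\bigl((b|\esp[\xi]|)^{2n}\bigr)$ under $b|\esp[\xi]|^2>\esp[|\xi|^2]$, whence $\sup_n\esp\bigl[|\mart_n|^2\bigr]<\infty$, and Lyapunov handles $\theta<2$. The paper instead re-derives this via a Jensen recursion, which is more work for the same conclusion.

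Parts 2 and 3, however, have gaps that you flag yourself but do not close, and the points at which your sketches stall are exactly the points the paper's argument is built to circumvent. For Part 2, centring $\xi(x_i)\mart_n^{(i)}$ and applying von Bahr--Esseen gives a recursion $a_{n+1}\le \rho_\theta a_n + K_\theta$ in which $\rho_\theta$ carries a multiplicative constant of at least $2$ (and further $2^{\theta-1}$ factors from splitting $|\mart_{n+1}|^\theta$ around its mean), so $\rho_\theta<1$ demands a strictly stronger condition than $b\esp[|\xi|^\theta]<(b|\esp[\xi]|)^\theta$. The ``refined Topchii--Vatutin'' inequality you invoke (the sharp Biggins/Liu cascade estimate $\esp\bigl[(\sum a_iX_i)^p\bigr]\le(\sum a_i)^p+\sum a_i^p\bigl(\esp[X^p]-1\bigr)$) is a statement about \emph{nonnegative} $a_i$ and $X_i$; here both the weights $\xi(x_i)/(b\esp[\xi])$ and the $\mart_n^{(i)}$ are complex, and I am not aware of a constant-free complex analogue. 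The paper avoids this by never taking an $L^\theta$ norm of a complex sum directly: it conditions on $\calf_{n-1}$, so that $\esp\bigl[|\pf_n|^2\mid\calf_{n-1}\bigr]$ is a genuinely nonnegative combination of $|\pf_{n-1}|^2$ and $\pf_{n-1}(|\xi|^2)$, raises this \emph{positive} quantity to the power $\theta/2<1$ using subadditivity, and closes a recursion for $R_n=\esp\bigl[\esp[|\mart_n|^2\mid\calf_{n-1}]^{\theta/2}\bigr]$ with the exact contraction factor $b\esp[|\xi|^\theta]/(b|\esp[\xi]|)^\theta$ and no spurious constants; since $\esp[|\mart_n|^\theta]\le R_n$ by Jensen, this is sharp.

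For Part 3, unrolling~\eqref{eq:cond-z2} gives $|\pf_{n+1}|^2=(b^2|\mom_1|^2)^{n+1}+b\sigma^2\sum_k(b^2|\mom_1|^2)^{n-k}\pf_k(|\xi|^2)+\sum_k(b^2|\mom_1|^2)^{n-k}D_{k+1}$, where $D_{k+1}=|\pf_{k+1}|^2-\esp\bigl[|\pf_{k+1}|^2\mid\calf_k\bigr]$. The middle piece is controlled by the positive-cascade $L^\nu$ bound exactly as you say, but $\esp[|D_{k+1}|^\nu]$ is dominated by $\esp\bigl[|\pf_{k+1}|^{2\nu}\bigr]$, which is the quantity you are trying to bound in the first place; the representation is circular, and the ``nested cascades'' obtained by slicing pairs of paths at their branch point still carry complex cross terms $\xi(y)\overline{\xi(y')}$ at the split, so they are not positive cascades either. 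The paper's proof takes a different and more elementary route: pick $\theta\in(2,8/3)$ so that $\theta/4<1$, bound $|\pf_{n+1}|^\theta\le\bigl(\sum_x|\xi(x)|^{\theta/4}|\pf_{n,x}|^{\theta/4}\bigr)^4$ by subadditivity, expand the fourth power as a multinomial, control the mixed terms by $\esp\bigl[|\pf_n|^2\bigr]^{\theta k/8}$ via Lyapunov, and obtain the closed recursion with contraction factor $b\bmom_\theta/(b\bmom_2)^{\theta/2}<1$ precisely when $G(\theta)<G(2)$, then set $\nu=\theta/2$. If you want to salvage the cascade-comparison route, that is the step you would need to replace.
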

%%%%%%%%%%%%%%%%%%%%%%%%%%%%%%%%%%%%%%%%%%%%%%%%%%%%%%%%%%%%%%%
%%%%%%%%%%%%%%%%%%%%%%%%%%%%%%%%%%%%%%%%%%%%%%%%%%%%%%%%%%%%%%%
\begin{proof}
	Let us prove the first estimate. Let $\theta\in(0,2)$. Then,
	\begin{align*}
		\esp\left[
			\left|
				\pf_n(\xi)
			\right|^{\theta}
		\right]
		&=
		\esp\left[
			\esp
			\left[
			\left.
			\left|
				\pf_n(\xi)
			\right|^{2 \ \frac{\theta}{2}}
			\right|
			\calf_{n-1}
			\right]
		\right]
		\le
		\esp\left[
			\esp
			\left[
			\left.
			\left|
				\pf_n(\xi)
			\right|^{2 }
			\right|
			\calf_{n-1}
			\right]^{\frac{\theta}{2}}
		\right],
	\end{align*}
	by Jensen's inequality (as $\frac{\theta}{2}\in(0,1)$). Now,
	\begin{eqnarray*}
		\esp\left[
			\left.
			\left|
				\pf_n(\xi)
			\right|^{2 }
			\right|
			\calf_{n-1}
		\right]^{\frac{\theta}{2}}
		&=&
		\left[
			b\sigma^2 \pf_{n-1} ( \left| \xi \right|^2 ) + b^2 |\mom_1|^2 |\pf_{n-1}(\xi)|^2
		\right]^{\frac{\theta}{2}}
		\\
		&\leq&
		b^{\frac{\theta}{2}}\sigma^{\theta}\pf_{n-1} ( \left| \xi \right|^2 )^{\frac{\theta}{2}}
		+
		b^{\theta}|\mom_1|^{\theta}|\pf_{n-1}(\xi)|^{\theta}.
	\end{eqnarray*}
	Integrating,
	\begin{eqnarray*}
		\esp\left[
			\left|
				\pf_n(\xi)
			\right|^{\theta}
		\right]
		&\leq&
		b^{\frac{\theta}{2}}\sigma^{\theta}
		\esp\left[
			\pf_{n-1} ( \left| \xi \right|^2 )^{\frac{\theta}{2}}
		\right]
		+
		b^{\theta}|\mom_1|^{\theta}
		\esp\left[
			|\pf_{n-1}(\xi)|^{\theta}
		\right].
	\end{eqnarray*}
	With the normalization $|\esp[\pf_n(\xi)]|=b^{\theta n} |\mom_1|^{\theta n}$,
	\begin{eqnarray*}
		\esp\left[
			\left|
				\mart_n(\xi)
			\right|^{\theta}
		\right]
		&\leq&
		\frac{b^{\frac{\theta}{2}}\sigma^{\theta}}{b^{\theta n} |\mom_1|^{\theta n}}
		\esp\left[
			\pf_{n-1} ( \left| \xi \right|^2 )^{\frac{\theta}{2}}
		\right]
		+
		\esp\left[
			\left|
				\mart_{n-1}(\xi)
			\right|^{\theta}
		\right]
		\\
		&=&
		b^{\frac{\theta}{2}}\sigma^{\theta}
		\frac{b^n \bmom_2^n}{b^{\theta n} |\mom_1|^{\theta n}}
		\esp\left[
			\mart_{n-1} ( \left| \xi \right|^2 )^{\frac{\theta}{2}}
		\right]
		+
		\esp\left[
			\left|
				\mart_{n-1}(\xi)
			\right|^{\theta}
		\right]
		\\
		&\leq&
		b^{\frac{\theta}{2}}\sigma^{\theta}
		\frac{b^{n\frac{\theta}{2}} \bmom_2^{n\frac{\theta}{2}}}{b^{\theta n} |\mom_1|^{\theta n}}
		\esp\left[
			\mart_{n-1} ( \left| \xi \right|^2 )
		\right]^{\frac{\theta}{2}}
		+
		\esp\left[
			\left|
				\mart_{n-1}(\xi)
			\right|^{\theta}
		\right]
		\\
		&=&
		b^{\frac{\theta}{2}}\sigma^{\theta}
		\left(
			\frac{\bmom_2^{1/2}}{b^{1/2} |\mom_1|}
		\right)^{n\theta}
		+
		\esp\left[
			\left|
				\mart_{n-1}(\xi)
			\right|^{\theta}
		\right],
	\end{eqnarray*}
	by Jensen's inequality. The estimate will then hold if $\bmom < b \mom_1^2$
	which is our hypothesis. 
	
	Let us prove the second estimate. Let $\theta\in(0,2)$. As above,
	\begin{eqnarray*}
		\esp\left[
			\left|
				\pf_n(\xi)
			\right|^{\theta}
		\right]
		&\leq&
		\esp\left[
			\esp
			\left[
			\left.
			\left|
				\pf_n(\xi)
			\right|^{2 }
			\right|
			\calf_{n-1}
			\right]^{\frac{\theta}{2}}
		\right].
	\end{eqnarray*}
	Now,
	\begin{eqnarray*}
		&&
		\esp\left[
			\left.
			\left|
				\pf_n(\xi)
			\right|^{2 }
			\right|
			\calf_{n-1}
		\right]^{\frac{\theta}{2}}
		\\
		&&
		=
			\esp\left[
			\left.
				\sum_{x\in\T_1} |\xi(x)|^2 |\pf_{n-1,x}(\xi)|^2
				+
				\sum_{\substack{x,x'\in\T_1\\ x\neq x'}}
				\xi(x)\pf_{n-1,x}(\xi) \ \overline{ \xi \left( x' \right) \pf_{n-1,x} \left( \xi' \right)}
			\right| \calf_{n-1}
			\right]^{\frac{\theta}{2}}
		\\
		&&
		%%%%%%%%%%%
		=
		\left(
		\sum_{x\in\T_1} 
		|\xi(x)|^2
		\esp \left[ \left. \left| \pf_{n-1,x}(\xi) \right|^2  \right|\calf_{n-1} \right]
		+
		\sum_{\substack{x,x'\in\T_1\\ x\neq x'}}
		\xi(x)\overline{\xi \left( x' \right)}
		\esp \left[ \left. \pf_{n-1,x}(\xi) \right|\calf_{n-1} \right]
		\esp \left[ \left. \overline{\pf_{n-1,x} \left( \xi' \right)} \right|\calf_{n-1} \right]
		\right)^{\frac{\theta}{2}}
		\\
		&&
		%%%%%%%%%%%
		\leq
		\sum_{x\in\T_1} 
		|\xi(x)|^{\theta}
		\esp \left[ \left. \left| \pf_{n-1,x}(\xi) \right|^2  \right| \calf_{n-1} \right]^{\frac{\theta}{2}} 
		+
		\left(
		\sum_{\substack{x,x'\in\T_1\\ x\neq x' }}
				\xi(x)\overline{\xi \left( x' \right) }
		\esp \left[ \left. \pf_{n-1,x}(\xi) \right| \calf_{n-1} \right]
		\esp \left[ \left. \overline{\pf_{n-1,x} \left( \xi' \right)} \right| \calf_{n-1} \right]
		\right)^{\frac{\theta}{2}} \!\!\!.
	\end{eqnarray*}
	Integrating,
	\begin{align*}
		\esp\left[
			\esp\left[
			\left|
				\pf_n(\xi)
			\right|^{2 }
			\Big{|} \calf_{n-1}
			\right]^\frac{\theta}{2}
		\right]
		%%%%%%%
		&\leq
		\sum_{x\in\T_1} 
		\esp \left[ \left| \xi(x) \right|^{\theta} \right]
		\esp \left[ \esp \left[ \left. \left| \pf_{n-1,x}(\xi) \right|^2  \right| \calf_{n-1} \right]^{\frac{\theta}{2}} \right]
		\\
		&
		\quad +
		\esp\left[
		\left(
		\sum_{\substack{x,x'\in\T_1\\ x\neq x' }}
				\xi(x)\overline{\xi(x')}
		\esp[\pf_{n-1,x}(\xi) |\calf_{n-1}]
		\esp \left[ \left. \overline{\pf_{n-1,x}(\xi')} \right| \calf_{n-1} \right]
		\right)^{\frac{\theta}{2}}
		\right]
		\\
		%%%%%%%
		&\leq
		b
		\esp \left[ \left| \xi \right|^{\theta} \right]
		\esp \left[ \esp \left[ \left. \left| \pf_{n-1}(\xi) \right|^2  \right| \calf_{n-2} \right]^{\frac{\theta}{2}} \right]
		\\
		%%%%%%%
		&\quad +
		\esp\left[
		\sum_{\substack{ x,x'\in\T_1 \\ x\neq x' }}
		\xi(x)\overline{\xi(x')}
		\esp \left[ \left. \pf_{n-1,x}(\xi) \right| \calf_{n-1} \right]
		\esp \left[ \left. \overline{\pf_{n-1,x}(\xi')} \right| \calf_{n-1} \right]
		\right]^{\frac{\theta}{2}}
		\\
		&
		%%%%%%%
		\leq
		b
		\esp \left[ \left| \xi \right|^{\theta} \right]
		\esp \left[ \esp \left[ \left. \left| \pf_{n-1}(\xi) \right|^2  \right| \calf_{n-2} \right]^{\frac{\theta}{2}} \right]
		\\
		& \quad +
		\left(
		\sum_{\substack{x,x'\in\T_1\\ x\neq x' }}
		|\esp[\xi]|^2
		\esp \left[\esp \left[ \left. \pf_{n-1,x}(\xi) \right| \calf_{n-1} \right] \right]
		\,
		\esp \left[ \esp \left[ \left. \overline{\pf_{n-1,x}(\xi')} \right| \calf_{n-1} \right] \right] 
		\right)^{\frac{\theta}{2}}
		\\
		%%%%%%%
		& \leq
		b
		\esp \left[ \left| \xi \right|^{\theta} \right]
		\esp \left[ \esp \left[ \left. \left| \pf_{n-1}(\xi) \right|^2  \right| \calf_{n-2} \right]^{\frac{\theta}{2}} \right]
		%%%%%%%
		+
		\left(
		\sum_{\substack{x,x'\in\T_1\\ x\neq x' }}
				|\esp[\xi]|^2
		\left| \esp[\pf_{n-1,x}(\xi)]\right|^2
		\right)^{\frac{\theta}{2}}
		\\
		%%%%%%%
		& \leq
		b
		\esp \left[ \left| \xi \right|^{\theta} \right]
		\esp \left[ \esp \left[ \left. \left| \pf_{n-1}(\xi) \right|^2  \right| \calf_{n-2} \right]^{\frac{\theta}{2}} \right]
		+
		b^{\frac{\theta}{2}}(b-1)^{\frac{\theta}{2}}
				|\esp[\xi]|^\theta
		\left| \esp[\pf_{n-1,x}(\xi)]\right|^{\theta}.
	\end{align*}
	Summarizing, 
	\begin{align*}
	\nonumber
		\esp\left[
			\esp
			\left[
				\left.
					\left|
						\pf_n(\xi)
					\right|^{2 }
				\right|
				\calf_{n-1}
			\right]^{\frac{\theta}{2}}
		\right]
		&\leq
		b
		\esp \left[ \left| \xi \right|^{\theta} \right]
		\,
		\esp \left[ \esp \left[ \left. \left| \pf_{n-1}(\xi) \right|^2  \right|\calf_{n-2}\right]^{\frac{\theta}{2}} \right]
		+
		b^{\frac{\theta}{2}}(b-1)^{\frac{\theta}{2}}
				|\esp[\xi]|^\theta
		\left| \esp[\pf_{n-1}(\xi)] \right|^{\theta}
		\\
		&=
		b
		\esp \left[ \left| \xi \right|^{\theta} \right]
		\,
		\esp \left[ \esp \left[ \left. \left| \pf_{n-1}(\xi) \right|^2  \right|\calf_{n-2}\right]^{\frac{\theta}{2}} \right]
		+
		b^{\frac{\theta}{2}}(b-1)^{\frac{\theta}{2}} b^{\theta n}|\esp[\xi]|^{\theta n}.
	\end{align*}
	Now, let
	\begin{eqnarray}
	\label{eq:aux-moments}
		R_n
		:=
		\esp
		\left[
			\esp
			\left[
			\left.
			\left|
				\mart_n(\xi)
			\right|^{2 }
			\right| \calf_{n-1}
			\right]^{\frac{\theta}{2}}
		\right]
		=
		\frac{1}{(b|\esp[\xi]|)^{n\theta}}
		\esp\left[
			\esp
			\left[
			\left.
			\left|
				\pf_n(\xi)
			\right|^{2 }
			\right|
			\calf_{n-1}
			\right]^{\frac{\theta}{2}}
		\right].
	\end{eqnarray}
	Then,
	\begin{eqnarray*}
		R_n
		&\leq&
		\frac{b\esp \left[ \left| \xi \right|^{\theta} \right]}{ \left(b \left| \esp[\xi] \right| \right)^{\theta}}
		\,
		R_{n-1}
		+
		b^{-\frac{\theta}{2}}(b-1)^{\frac{\theta}{2}}.
	\end{eqnarray*}
	The sequence $(R_n)_{n \ge 1}$ will be bounded if
	\begin{eqnarray*}
		\frac{b\esp \left[ \left| \xi \right|^{\theta} \right]}{ \left(b \left| \esp[\xi] \right| \right)^{\theta}} < 1,
	\end{eqnarray*}
	which is equivalent to $\displaystyle b\esp \left[ \left| \xi \right|^{\theta} \right]<(b|\esp[\xi]|)^{\theta}$, i.e., $\displaystyle G(\theta)<\ln\left(b|\esp[\xi]| \right)$.
	
	 Next, we prove the third estimate. We assume that $\amin>2$. Let $\theta\in(2,8/3)$ so that $\theta/4<1$.
	 Then,
	 \begin{align*}
	 	\esp\left[
	 		\left| \pf_{n+1}\right|^{\theta}
	 	\right]
	 	&=
	 	\esp\left[
	 		\left| \sum_{x\in\mathbb{T}_1} \xi(x)\pf_{n,x} \right|^{\theta}
	 	\right]
	 	\leq
	 	\esp\left[
	 		\left( \sum_{x\in\mathbb{T}_1} |\xi(x)|\, |\pf_{n,x}| \right)^{4 \ \frac{\theta}{4}}
	 	\right]
	 	\leq
	 	\esp\left[
	 		\left( \sum_{x\in\mathbb{T}_1} |\xi(x)|^{\frac{\theta}{4}}\, |\pf_{n,x}|^{\frac{\theta}{4}} \right)^{4}
	 	\right]
	 	\\
	 	&=
	 	b \esp\left[ |\xi|^{\theta} \right]\esp\left[ |\pf_{n}|^{\theta}  \right]
	 	\\
	 	&\quad +
	 	3b(b-1)
	 	\esp\left[ |\xi|^{\frac{\theta}{2}}\right]^2
	 	\esp\left[  |\pf_{n}|^{\frac{\theta}{2}} \right]^2
	 	+
	 	4b(b-1)
	 	\esp\left[ |\xi|^{\frac{\theta}{4}}\right]
	 	\esp\left[  |\pf_{n}|^{\frac{\theta}{4}} \right]
	 	\esp\left[ |\xi|^{\frac{3\theta}{4}}\right]
	 	\esp\left[  |\pf_{n}|^{\frac{3\theta}{4}} \right]
	 	\\
	 	&\quad
	 	+
	 	6b(b-1)(b-2)
	 	\esp\left[ |\xi|^{\frac{\theta}{4}}\right]^2
	 	\esp\left[  |\pf_{n}|^{\frac{\theta}{4}} \right]^2
	 	\esp\left[ |\xi|^{\frac{\theta}{2}}\right]
	 	\esp\left[  |\pf_{n}|^{\frac{\theta}{2}} \right]
	 	\\
	 	&\quad
	 	+
	 	b(b-1)(b-2)(b-3)
	 	\esp\left[ |\xi|^{\frac{\theta}{4}}\right]^4
	 	\esp\left[  |\pf_{n}|^{\frac{\theta}{4}} \right]^4
	 	\\
	 	&\leq
	 	b \bmom_{\theta}
	 	\esp \left[ \left| \pf_n \right|^2 \right]^{\frac{\theta}{2}}
	 	+
	 	3b(b-1)
	 	\bmom_2^{\frac{\theta}{2}}
	 	\esp \left[ \left| \pf_n \right|^2 \right]^{\frac{\theta}{2}}
	 	+
	 	4b(b-1)
	 	\bmom_2^{\frac{\theta}{8}+\frac{3\theta}{8}}
	 	\esp \left[ \left| \pf_n \right|^2 \right]^{\frac{\theta}{8}+\frac{3\theta}{8}}
	 	\\
	 	&\quad
	 	+
	 	6b(b-1)(b-2)
	 	\bmom_2^{\frac{\theta}{4}+\frac{3\theta}{4}}
	 	\esp \left[ \left| \pf_n \right|^2 \right]^{\frac{\theta}{4}+\frac{3\theta}{4}}
	 	+
	 	b(b-1)(b-2)(b-3)
	 	\bmom_2^{\frac{\theta}{2}}
	 	\esp \left[ \left| \pf_n \right|^2 \right]^{\frac{\theta}{2}}
	 	\\
	 	&=
	 	b \bmom_{\theta}\esp\left[ |\pf_{n}|^{\theta}  \right]
	 	+
	 	b(b-1)(b^2+3)\bmom_2^{\frac{\theta}{2}}
	 	\esp \left[ \left| \pf_n \right|^2 \right]^{\frac{\theta}{2}},
	 \end{align*}
	where \(\bmom_\theta = \esp \left[ \left| \xi \right|^\theta \right]\). Now, as
	\begin{align*}
		|\pf_{n+1}|^2
		=
		\sum_{x\in\mathbb{T}_1}|\xi(x)|^2|\pf_{n,x}|^2
		+
		\sum_{\substack{x,x'\in\mathbb{T}_1 \\ x\neq x'}}
			\xi(x)\pf_{n,x}\overline{\xi(x')\pf_{n,x'}},
	\end{align*}
	we obtain
	\begin{align*}
		\esp \left[ \left| \pf_{n+1} \right|^2 \right]
		=
		b \bmom_2 \esp \left[ \left| \pf_n \right|^2 \right]
		+
		b(b-1)|\esp[\xi]|^2|\esp[\pf_n]|^2,
	\end{align*}
	so that $\esp \left[ \left| \pf_{n+1} \right|^2 \right]\geq b \bmom_2 \esp \left[ \left| \pf_n \right|^2 \right]$.
	 Hence,
	 \begin{align*}
	 	\frac{
	 		\esp\left[
	 			\left| \pf_{n+1}\right|^{\theta}
	 		\right]
	 	}{
					\esp\left[
	 					\left| \pf_{n+1}\right|^{2}
	 				\right]^{\frac{\theta}{2}} 	
	 	}
	 	\leq
	 	\frac{b \bmom_{\theta}}{(b\bmom_2)^{\frac{\theta}{2}}}
	 	\frac{
	 		\esp\left[
	 			\left| \pf_{n}\right|^{\theta}
	 		\right]
	 	}{
					\esp\left[
	 					\left| \pf_{n}\right|^{2}
	 				\right]^{\frac{\theta}{2}} 	
	 	}
	 	+
	 	\frac{b(b-1)(b^2+3)}{b^{\frac{\theta}{2}}}.
	 \end{align*}
	 This stays bounded if $\displaystyle \frac{b \bmom_{\theta}}{(b\bmom_2)^{\frac{\theta}{2}}}<1$, i.e.,
	 if $G(\theta)<G(2)$. As we assumed that $\amin>2$, this is satisfied for $\theta>2$ small enough. We then take $\nu=\theta/2$. \qedhere

%	 {\color{red} Do we use this?}
%	 To obtain the fourth estimate, it is enough to show that, if $b|\esp[\xi]|^2<\esp[|\xi|^2]$, the ratio
%	\begin{eqnarray}
%		R_n
%		=
%		\frac{(b\esp[|\xi|^2])^n}{\esp[|\pf_n(\xi)|^2]},
%	\end{eqnarray}
%	stays bounded. But recall that
%	\begin{eqnarray}
%		\esp[|\pf_{n}|^2]
%		=
%		b^n(b|\mom_1|^{2})^n
%		+
%		\sigma^2
%		b^n \bmom_2^{n-1}
%		\sum^2_{j=0}
%		\left(
%			\frac{b |\mom_1|^2}{\bmom_2}
%		\right)^j,
%	\end{eqnarray}
%	with $\mom_1=\esp[\xi]$ and $\bmom_2=\esp[|\xi|^2]$. Hence,
%	\begin{eqnarray}
%		\frac{\esp[|\pf_{n}|^2]}{b^n \bmom_2^n}
%		\geq
%		\sigma^2
%		\bmom_2^{-1}
%		\sum^2_{j=0}
%		\left(
%			\frac{b |\mom_1|^2}{\bmom_2}
%		\right)^j,
%	\end{eqnarray}
%	which is bounded away from zero.
\end{proof}
%%%%%%%%%%%%%%%%%%%%%%%%%%%%%%%%%%%%%%%%%%%%%%%%%%%%%%%%%%%%%%%
%%%%%%%%%%%%%%%%%%%%%%%%%%%%%%%%%%%%%%%%%%%%%%%%%%%%%%%%%%%%%%%
%{\color{red}
%We have to include the boundary case $b|\mom_1|^2=\bmom_2$. 
%}

%%%%%%%%%%%%%%%%%%%%%%%%%%%%%%%%%%%%%%%%%%%%%%%%%%%%%%%%%%%%%%%
%%%%%%%%%%%%%%%%%%%%%%%%%%%%%%%%%%%%%%%%%%%%%%%%%%%%%%%%%%%%%%%
We can now obtain the moment estimates on the martingale $(\mart_n(\xi))_{n \ge 1}$.
%%%%%%%%%%%%%%%%%%%%%%%%%%%%%%%%%%%%%%%%%%%%%%%%%%%%%%%%%%%%%%%
%%%%%%%%%%%%%%%%%%%%%%%%%%%%%%%%%%%%%%%%%%%%%%%%%%%%%%%%%%%%%%%
\begin{proof}[Proof of Proposition \ref{thm:moment-estimates}]
	The first statement is an immediate consequence of the discussion above. To show the $\mathbb{L}^2$ condition, we start from the recursion
\begin{eqnarray*}
		\esp
		\left[
		\left. 
			|\pf_{n+1}(\xi)|^2
		\right|
		\calf_n
		\right]	
		&=&
		b^2 |\mom_1|^2|\pf_{n}(\xi)|^2
		+
		b \sigma^2
		\pf_n \left( \left| \xi \right|^2 \right).
\end{eqnarray*}
Then,
\begin{eqnarray*}
		\esp\left[
			|\mart_{n+1}(\xi)|^2\Big{|}\calf_n
		\right]	
		&=&
		b^2 |\mom_1|^2
		\left(
			b|\mom_1|
		\right)^{-2(n+1)}
		|\pf_{n}(\xi)|^2
		+
		b \sigma^2
		\left(
			b|\mom_1|
		\right)^{-2(n+1)}
		\pf_n \left( \left| \xi \right|^2 \right)
		\\
		&=&
		|\mart_{n}(\xi)|^2
		+
		\frac{\sigma^2}{b \left| \mom_1 \right|^2}
		\left(
			\frac{\bmom_2}{b \left| \mom_1 \right|^2}
		\right)^n
		\mart_n \left( \left| \xi \right|^2 \right).
\end{eqnarray*}
The second moments will then be bounded as long as $\bmom_2 < b |\mom_1|^2$, i.e., if
\begin{align*}
	\esp \left[ \left| \xi \right|^2 \right] &< b |\esp[\xi]|^2.
	\qedhere
\end{align*}
\end{proof}

%%%%%%%%%%%%%%%%%%%%%%%%%%%%%%%%%%%%%%%%%%%%%%%%%%%%%%%%%%%%%%%
%%%%%%%%%%%%%%%%%%%%%%%%%%%%%%%%%%%%%%%%%%%%%%%%%%%%%%%%%%%%%%%
%%%%%%%%%%%%%%%%%%%%%%%%%%%%%%%%%%%%%%%%%%%%%%%%%%%%%%%%%%%%%%%
%%%%%%%%%%%%%%%%%%%%%%%%%%%%%%%%%%%%%%%%%%%%%%%%%%%%%%%%%%%%%%%
%%%%%%%%%%%%%%%%%%%%%%%%%%%%%%%%%%%%%%%%%%%%%%%%%%%%%%%%%%%%%%%
%%%%%%%%%%%%%%%%%%%%%%%%%%%%%%%%%%%%%%%%%%%%%%%%%%%%%%%%%%%%%%%
%%%%%%%%%%%%%%%%%%%%%%%%%%%%%%%%%%%%%%%%%%%%%%%%%%%%%%%%%%%%%%%
%%%%%%%%%%%%%%%%%%%%%%%%%%%%%%%%%%%%%%%%%%%%%%%%%%%%%%%%%%%%%%%
 \subsection{Lower tail bounds}
 \label{sec:lower-tails}
%%%%%%%%%%%%%%%%%%%%%%%%%%%%%%%%%%%%%%%%%%%%%%%%%%%%%%%%%%%%%%%
%%%%%%%%%%%%%%%%%%%%%%%%%%%%%%%%%%%%%%%%%%%%%%%%%%%%%%%%%%%%%%%
%%%%%%%%%%%%%%%%%%%%%%%%%%%%%%%%%%%%%%%%%%%%%%%%%%%%%%%%%%%%%%%
%%%%%%%%%%%%%%%%%%%%%%%%%%%%%%%%%%%%%%%%%%%%%%%%%%%%%%%%%%%%%%%
%%%%%%%%%%%%%%%%%%%%%%%%%%%%%%%%%%%%%%%%%%%%%%%%%%%%%%%%%%%%%%%
%%%%%%%%%%%%%%%%%%%%%%%%%%%%%%%%%%%%%%%%%%%%%%%%%%%%%%%%%%%%%%%
%%%%%%%%%%%%%%%%%%%%%%%%%%%%%%%%%%%%%%%%%%%%%%%%%%%%%%%%%%%%%%%
%%%%%%%%%%%%%%%%%%%%%%%%%%%%%%%%%%%%%%%%%%%%%%%%%%%%%%%%%%%%%%%
The following results correspond to Lemma 6.3 and Theorem 6.4 from \cite{DES-Complex}. We include the proofs for the convenience of the reader.
%%%%%%%%%%%%%%%%%%%%%%%%%%%%%%%%%%%%%%%%%%%%%%%%%%%%%%%%%%%%%%%
%%%%%%%%%%%%%%%%%%%%%%%%%%%%%%%%%%%%%%%%%%%%%%%%%%%%%%%%%%%%%%%
\begin{lemma}
	Suppose that there exists $n\geq 1$, $\bigfree\in\R$ and $\eta\in (0,1)$ such that
	\begin{eqnarray}
	\label{eq:lower-bound-input}
		\p\left[
			\ln |\pf_n(\xi)| < \bigfree \,
		\right]
		<
		\eta.
	\end{eqnarray}
	Then, for all $\varepsilon>0$, there exists $k_0=k_0(\varepsilon,\eta)\geq 1$ and $\delta_0=\delta_0(\varepsilon,\eta)>0$, both independent of $n$, such that
	\begin{eqnarray*}
		\p\left[
			\ln \left| \pf_{n+k}(\xi) \right| < \bigfree - \delta k\,
		\right]
		<
		\varepsilon,
	\end{eqnarray*}
	for all $k\geq k_0$ and all $\delta \geq \delta_0$.
\end{lemma}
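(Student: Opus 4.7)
The plan is to transfer the lower-tail bound at level $n$ to level $n+k$ by decomposing $\pf_{n+k}$ over the depth-$n$ subtrees hanging off generation $k$, and combining the i.i.d.\ structure of these subtrees with an anti-concentration estimate coming from the uniform continuity of the law of $\xi$ (which I adopt as a standing hypothesis, since this lemma is applied in the proof of Theorem~\ref{thm:main}).

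I would start from the decomposition
\begin{align*}
\pf_{n+k}(\xi)
=
\sum_{x \in \T_k} W_x\,\pf_n^{(x)},
\qquad
W_x = \prod_{t=1}^{k}\xi(\walk_t^x),
\end{align*}
where the $(\pf_n^{(x)})_{x\in\T_k}$ are i.i.d.\ copies of $\pf_n$, jointly independent of $\calf_k$. By hypothesis, each satisfies $\p(|\pf_n^{(x)}|<e^{\bigfree})<\eta$. The key observation is that, for any fixed $x \in \T_k$, the single weight $\xi(x)$ appears only in $W_x$: it factors as $W_x = \xi(x)\,\tilde W_x$ with $\tilde W_x = \prod_{t=1}^{k-1}\xi(\walk_t^x)$, and is absent from every other $W_{x'}$ (distinct leaves of $\T_k$) as well as from each $\pf_n^{(x')}$ (which depend only on weights strictly below level $k$). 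Letting $\calf^{(-x)}$ denote the $\sigma$-algebra generated by all weights except $\xi(x)$, one can write
\begin{align*}
\pf_{n+k} = \xi(x)\,A_x + B_x, \qquad A_x := \tilde W_x\,\pf_n^{(x)},
\end{align*}
with $A_x$ and $B_x$ both $\calf^{(-x)}$-measurable. Uniform continuity of $\xi$ then yields, on $\{A_x\neq 0\}$,
\begin{align*}
\p\bigl(|\pf_{n+k}|<t\,\big|\,\calf^{(-x)}\bigr)
\le
\sup_{z\in\C}\p(\xi\in B(z,t/|A_x|))
=:
\rho(t/|A_x|),
\end{align*}
where $\rho(r)\to 0$ as $r\to 0$.

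To pass to an unconditional estimate I would choose $x$ adaptively: fix a canonical ordering on $\T_k$, pick $\delta'>0$ with $\rho(\delta')<\varepsilon/2$, and set $x^{\star}$ to be the smallest $x\in\T_k$ with $|A_x|\ge t/\delta'$. Since $A_x$ is $\calf^{(-x)}$-measurable for each fixed $x$, the event $\{x^{\star}=x_0\}$ is $\calf^{(-x_0)}$-measurable, and summing the conditional bound over $x_0 \in \T_k$ gives
\begin{align*}
\p(|\pf_{n+k}|<t)
\le
\rho(\delta') + \p\bigl(\forall x \in \T_k:\ |A_x|<t/\delta'\bigr).
\end{align*}
With $t = e^{\bigfree - \delta k}$, the task reduces to showing that the residual probability can be made smaller than $\varepsilon/2$ for $k \ge k_0$ and $\delta \ge \delta_0$. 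This is the main technical obstacle. Hoeffding's inequality applied to the i.i.d.\ indicators $\mathds{1}_{\{|\pf_n^{(x)}|\ge e^{\bigfree}\}}$ produces, with probability $\ge 1 - e^{-c b^k}$, at least $(1-2\eta)b^k$ good subtrees, so it suffices to guarantee one good $x$ satisfying $|\tilde W_x| \ge e^{-\delta k}/\delta'$. To handle the dependence among $(\tilde W_x)_{x \in \T_k}$, I would restrict to a sub-family of $\T_k$ whose elements lie in disjoint subtrees below some intermediate level $m$, so that after conditioning on the first $m$ levels of weights the remaining factors of the $\tilde W_x$'s become independent products of $k{-}1{-}m$ i.i.d.\ copies of $|\xi|$; a Chernoff/large-deviation bound combined with the law of large numbers then gives the required lower bound as soon as $\delta$ exceeds the negative part of $\esp[\log|\xi|]$ by a margin (with an obvious modification using a small negative moment $\esp[|\xi|^{-\alpha}]$ in case $\esp[\log|\xi|]=-\infty$). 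The independence of the $(\pf_n^{(x)})$ from $(\tilde W_x)$ ensures the two conditions can be met simultaneously, closing the argument with $k_0,\delta_0$ depending only on $\varepsilon,\eta$ and the law of $\xi$.
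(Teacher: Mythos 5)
Your proposal takes a genuinely different route from the paper: you decompose $\pf_{n+k}$ in a single $k$-level jump over $\T_k$, whereas the paper decomposes over $\T_1$, proves a one-step estimate, and then iterates it $k$ times, feeding each output back in as the input of the next step and controlling the resulting error sequence $\varepsilon_k=\varepsilon_{k-1}^b+\nu$ by a fixed-point analysis of $x\mapsto x^b+\nu$. The one-step decomposition means the paper's anti-concentration always acts at the scale of a \emph{single} weight: writing $\ln|\pf_{n+1}|=\ln A_n+\ln|\pf_n^*|$ with $\pf_n^*=\max_{x\in\T_1}|\pf_{n,x}|$, the maximizing term in $A_n$ has modulus exactly $|\xi(x_*)|$, so conditionally on the identity of the maximizer $A_n$ is a translate of $\xi(x_*)$, and a single per-level cost $e^{-\delta}<\kappa$ coming from uniform continuity suffices. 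Products of weights along long paths are never lower-bounded.

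The residual term you flag as your ``main technical obstacle'' is in fact a genuine gap. You need some good leaf $x$ (meaning $|\pf_n^{(x)}|\ge e^{\bigfree}$) whose ancestral product $\tilde W_x=\prod_{t=1}^{k-1}\xi(\walk_t^x)$ satisfies $|\tilde W_x|\ge e^{-\delta k}/\delta'$. Your Chernoff/LLN argument presumes a finite $\esp[\log|\xi|]$ or a finite negative moment $\esp[|\xi|^{-\alpha}]$, but the lemma's only standing hypothesis is uniform continuity of $\mathsf{P}_0$, and that alone guarantees neither: the ball-probability bound may decay as slowly as $1/\log(1/\delta)$, in which case $\esp[\log|\xi|]=-\infty$ and $\esp[|\xi|^{-\alpha}]=\infty$ for every $\alpha>0$. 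The $\tau$-condition, which would supply the negative moment, is an extra hypothesis introduced only for the almost-sure strengthening in Theorem~\ref{thm:main-as} and is not available here. Your disjoint-subtree device also does not close the loop: the first $m$ levels of $\tilde W_x$ are shared across the candidate family, they survive the conditioning, and nothing prevents them from being arbitrarily small; letting $m$ grow with $k$ just re-imports the correlation problem at a new scale. The paper's iterated one-step recursion avoids the difficulty entirely because it never needs to exhibit a long path whose weight product is not too small.
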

%%%%%%%%%%%%%%%%%%%%%%%%%%%%%%%%%%%%%%%%%%%%%%%%%%%%%%%%%%%%%%%
%%%%%%%%%%%%%%%%%%%%%%%%%%%%%%%%%%%%%%%%%%%%%%%%%%%%%%%%%%%%%%%
\begin{proof}
	Let us drop $\xi$ from the notation. We decompose
	\begin{eqnarray*}
		\pf_{n+1}
		=
		\sum_{x\in\mathbb{T}_1} \xi(x) \pf_{n,x},
	\end{eqnarray*}
	and we let $\pf_n^*=\max_{x\in\mathbb{T}_1} |\pf_{n,x}|$.
	Then,
	\begin{eqnarray*}
		\ln |\pf_{n+1}|
		=
		\ln \left| \sum_{x\in\mathbb{T}_1} \xi(x) \frac{\pf_{n,x}}{\pf_n^*}\right|
		+
		\ln |\pf_n^*|
		=:
		\ln A_n
		+
		\ln |\pf_n^*|.
	\end{eqnarray*}
	Let
	\begin{eqnarray*}
		B 
		= 
		\left\{
			\ln \left| \pf_n^* \right| < \bigfree
		\right\}.
	\end{eqnarray*}
	Hence,
	\begin{eqnarray*}
		\p\left[ B \right] < \eta^b.
	\end{eqnarray*}
	Let $\delta>0$. Then,
	\begin{eqnarray*}
		\p\left[
			\ln |\pf_{n+1}| < \bigfree - \delta\,
		\right]
		&=&
		\p\left[
			\ln |\pf_{n+1}| < \bigfree - \delta,\, B
		\right]
		+
		\p\left[
			\ln |\pf_{n+1}| < \bigfree - \delta, \, B^c
		\right]
		\\
		&\leq&
		\eta^b
		+
		\p\left[
			\ln A_n+ \ln |\pf_{n}^*| < \bigfree - \delta, \, \ln |\pf_{n}^*| \geq \bigfree
		\right]
		\\
		&=&
		\eta^b
		+
		\p\left[
			\ln A_n  < - \delta
		\right].
	\end{eqnarray*}
	Let $\nu>0$ and let $\kappa>0$ be such that
	\begin{eqnarray*}
		\p[\xi \in B(z;\kappa)]<\nu,
	\end{eqnarray*}
	for all $z\in\mathbb{C}$. Then, if $x_*\in\mathbb{T}_1$ is defined such that $\pf_{n,x_*}=\pf^*_n$ and $x\in\mathbb{T}_1$, there exists $z(\xi)\in\mathbb{C}$ independent of $\xi(x)$ such that
	\begin{eqnarray*}
		\p\left[ \left. A_n < \kappa \right| \, x_*=x \right]
		= 
		\p[\xi(x) \in B(z(\xi);\kappa)]
		< \nu.
	\end{eqnarray*}
	It is easy to see that the same bound holds without conditioning.
	Hence, if $e^{-\delta}<\kappa$,
	\begin{eqnarray*}
		\p\left[
			\ln |\pf_{n+1}| < \bigfree - \delta\,
		\right]
		<
		\eta^b + \nu =: \varepsilon_1.
	\end{eqnarray*}
	Repeating this argument using this last bound as an input instead of \eqref{eq:lower-bound-input}, we get 
	\begin{eqnarray*}
		\p\left[
			\ln |\pf_{n+2}| < \bigfree - 2\delta\,
		\right]
		<
		\varepsilon_1^b + \nu =:\varepsilon_2,
	\end{eqnarray*}
	and, inductively,
	\begin{eqnarray*}
		\p\left[
			\ln |\pf_{n+k}| < \bigfree - k\delta\,
		\right]
		<
		\varepsilon_k,
	\end{eqnarray*}
	where
	\begin{eqnarray*}
		\varepsilon_k = \varepsilon_{k-1}^b + \nu.
	\end{eqnarray*}
	Let $\varepsilon>0$ and $f_{\nu}(x)=x^b+\nu$, $x\in[0,1]$. Then, there exists $\nu_0>0$ such that, if $0<\nu<\nu_0$, then
	\begin{itemize}
		\item $f_{\nu}$ has two fixed points $0<x_{\nu}<y_{\nu}<1$,
		
		\item $x_{\nu}$ is stable,
		
		\item $x_{\nu}<\varepsilon$,
		
		\item $\eta < y_{\nu}$.
	\end{itemize}
	Hence, if we choose $0<\nu<\nu_0$, there exists $k_0=k_0(\varepsilon,\eta) \geq 1$ such that $\varepsilon_k < \varepsilon$, for all $k \geq k_0$.
\end{proof}
%%%%%%%%%%%%%%%%%%%%%%%%%%%%%%%%%%%%%%%%%%%%%%%%%%%%%%%%%%%%%%%
%%%%%%%%%%%%%%%%%%%%%%%%%%%%%%%%%%%%%%%%%%%%%%%%%%%%%%%%%%%%%%%

%%%%%%%%%%%%%%%%%%%%%%%%%%%%%%%%%%%%%%%%%%%%%%%%%%%%%%%%%%%%%%%
%%%%%%%%%%%%%%%%%%%%%%%%%%%%%%%%%%%%%%%%%%%%%%%%%%%%%%%%%%%%%%%
\begin{corollary}
	\label{thm:lower-tail}
	Let $c>0$.
	Suppose that there exists $n_0\geq 1$, $\free\in\R$ and $\eta\in (0,1)$ such that
	\begin{eqnarray*}
		\p\left[
			\frac{1}{n}\ln |\pf_n(\xi)| < \free - c \,
		\right]
		<
		\eta,
	\end{eqnarray*}
	for all $n\geq n_0$.
	Then, for all $\varepsilon>0$, there exists $n_1=n_1(\varepsilon,\eta, c, n_0) \geq 1$ such that
	\begin{eqnarray*}
		\p\left[
			\frac{1}{n}\ln |\pf_n(\xi)| < \free - 2c \,
		\right]
		<
		\varepsilon,
	\end{eqnarray*}
	for all $n\geq n_1$.
\end{corollary}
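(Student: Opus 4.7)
The plan is to reduce the corollary to the preceding lemma by a change of threshold. Reading the hypothesis as $\p[\ln|\pf_n(\xi)| < n(\free - c)] < \eta$ and feeding this into the lemma with $\bigfree = n(\free - c)$, separately for each $n \geq n_0$, produces — thanks to the fact that the constants $k_0 = k_0(\varepsilon, \eta)$ and $\delta_0 = \delta_0(\varepsilon, \eta)$ are uniform in $n$ — the bound
\begin{equation*}
    \p\!\left[\ln|\pf_{n+k}(\xi)| < n(\free - c) - \delta_0 k \right] < \varepsilon, \qquad n \geq n_0,\ k \geq k_0.
\end{equation*}

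To read off the desired statement at a generic index $m$, I will split $m = n + k$ by setting $k = k_0$ and $n = m - k_0$. A routine rearrangement shows that the inequality $m(\free - 2c) \leq (m - k_0)(\free - c) - \delta_0 k_0$ — which would guarantee the inclusion $\{\tfrac{1}{m}\ln|\pf_m(\xi)| < \free - 2c\} \subseteq \{\ln|\pf_m(\xi)| < (m - k_0)(\free - c) - \delta_0 k_0\}$ — is equivalent to $mc \geq k_0(\free - c + \delta_0)$, which holds for all sufficiently large $m$ (trivially so if $\free - c + \delta_0 \leq 0$). Taking $n_1$ larger than both $n_0 + k_0$ and $k_0(\free - c + \delta_0)_+/c$ then delivers the conclusion.

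No genuine obstacle arises: all of the probabilistic substance is carried by the previous lemma, and the corollary amounts to absorbing the $O(1)$ additive drift $\delta_0 k_0$ (bounded uniformly in $m$, since $k_0$ is fixed once $\varepsilon, \eta$ are given) into the linear slack $mc$ gained by relaxing the threshold from $\free - c$ to $\free - 2c$. The only mild care needed is in the bookkeeping of the splitting $m = n + k$, where the uniformity of $k_0, \delta_0$ in $n$ is what makes keeping $k$ fixed while $n \to \infty$ a legitimate move.
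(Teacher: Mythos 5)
Your proof is correct and follows essentially the same route as the paper's: set $\bigfree = n(\free - c)$, invoke the preceding lemma with the uniform-in-$n$ constants $k_0(\varepsilon,\eta)$ and $\delta_0(\varepsilon,\eta)$, then absorb the bounded additive drift into the linear slack $mc$ by taking $m$ large. (Incidentally, your rearranged condition $mc \geq k_0(\free - c + \delta_0)$ is the correct one; the paper's displayed expression $\free - c - \frac{k}{n+k}(\free + c + \delta)$ appears to contain a sign typo and should read $\free - c - \frac{k}{n+k}(\free - c + \delta)$, but this does not affect either argument.)
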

%%%%%%%%%%%%%%%%%%%%%%%%%%%%%%%%%%%%%%%%%%%%%%%%%%%%%%%%%%%%%%%
%%%%%%%%%%%%%%%%%%%%%%%%%%%%%%%%%%%%%%%%%%%%%%%%%%%%%%%%%%%%%%%
\begin{proof}
	Let $n\geq n_0$. Then, by hypothesis, if $\bigfree = n(\free-c)$, we have 
	\begin{eqnarray*}
		\p\left[
			\ln |\pf_n(\xi)| < \bigfree \,
		\right]
		<
		\eta.
	\end{eqnarray*}
	Let $\delta_0>0$ and $k_0\geq 1$ be as in the previous lemma, and fix $\delta \geq \delta_0$, $k\geq k_0$.
	Hence,
	\begin{eqnarray*}
		\p\left[
			\ln |\pf_{n+k}(\xi)| < \bigfree - \delta k\,
		\right]
		<
		\varepsilon.
	\end{eqnarray*}
	But, now, 
	\begin{align*}
		\frac{1}{n+k}\left( \bigfree - \delta k \right)
		=
		\frac{1}{n+k}\left( n \free - nc - \delta k \right)
		=
		\free
		-
		c
		-
		\frac{k}{n+k} \left( \free + c + \delta \right)
		\geq
		\free - 2c,
	\end{align*}
	if $n \geq n_1$, for some $n_1\geq 1$ large enough. Hence, if $n \geq n_1$, 
	\begin{align*}
		\p\left[
			\frac{1}{n+k}
			\ln |\pf_{n+k}(\xi)| < \free - 2c\,
		\right]
		&<
		\varepsilon. \qedhere
	\end{align*}
\end{proof}
%%%%%%%%%%%%%%%%%%%%%%%%%%%%%%%%%%%%%%%%%%%%%%%%%%%%%%%%%%%%%%%
%%%%%%%%%%%%%%%%%%%%%%%%%%%%%%%%%%%%%%%%%%%%%%%%%%%%%%%%%%%%%%%

%%%%%%%%%%%%%%%%%%%%%%%%%%%%%%%%%%%%%%%%%%%%%%%%%%%%%%%%%%%%%%%
%%%%%%%%%%%%%%%%%%%%%%%%%%%%%%%%%%%%%%%%%%%%%%%%%%%%%%%%%%%%%%%
%%%%%%%%%%%%%%%%%%%%%%%%%%%%%%%%%%%%%%%%%%%%%%%%%%%%%%%%%%%%%%%
%%%%%%%%%%%%%%%%%%%%%%%%%%%%%%%%%%%%%%%%%%%%%%%%%%%%%%%%%%%%%%%
\subsection{Almost-sure lower bounds}
\label{sec:as-lower-bounds}
%%%%%%%%%%%%%%%%%%%%%%%%%%%%%%%%%%%%%%%%%%%%%%%%%%%%%%%%%%%%%%%
%%%%%%%%%%%%%%%%%%%%%%%%%%%%%%%%%%%%%%%%%%%%%%%%%%%%%%%%%%%%%%%
%%%%%%%%%%%%%%%%%%%%%%%%%%%%%%%%%%%%%%%%%%%%%%%%%%%%%%%%%%%%%%%
%%%%%%%%%%%%%%%%%%%%%%%%%%%%%%%%%%%%%%%%%%%%%%%%%%%%%%%%%%%%%%%

%%%%%%%%%%%%%%%%%%%%%%%%%%%%%%%%%%%%%%%%%%%%%%%%%%%%%%%%%%%%%%%%
%%%%%%%%%%%%%%%%%%%%%%%%%%%%%%%%%%%%%%%%%%%%%%%%%%%%%%%%%%%%%%%%
The following result improves the lemmas of the previous section under the $\tau$-condition and will be the key to show the almost sure convergence to the free energy.
%%%%%%%%%%%%%%%%%%%%%%%%%%%%%%%%%%%%%%%%%%%%%%%%%%%%%%%%%%%%%%%
%%%%%%%%%%%%%%%%%%%%%%%%%%%%%%%%%%%%%%%%%%%%%%%%%%%%%%%%%%%%%%%
\begin{lemma}
\label{thm:as-lower-bound}
	Under the hypothesis of the previous lemma, assume that, for all $c>0$, there exists $n_0\geq 1$ such that
	\begin{eqnarray*}
		\p\left[
			\frac{1}{n} \ln |\pf_n(\xi)|< \free - c
		\right]
		<
		1,
	\end{eqnarray*}
	for all $n\geq n_0$. Then,
	\begin{eqnarray*}
		\liminf_{n \to \infty} \frac{1}{n} \ln |\pf_n(\xi)|
		\geq
		\free,
		\qquad
		\p-\text{almost surely}.
	\end{eqnarray*}
\end{lemma}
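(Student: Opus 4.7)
The plan is a Borel--Cantelli argument powered by the $\tau$-condition. For each fixed $c>0$, I will show $\sum_{N\geq 1}\p\bigl[\tfrac{1}{N}\ln|\pf_N(\xi)|<\free-c\bigr]<\infty$; applying this to $c=1/m$, $m\in\mathbb{N}$, and taking a countable union of the resulting null sets yields $\liminf_N \tfrac{1}{N}\ln|\pf_N|\geq \free$ almost surely. The hypothesis $\p[\tfrac{1}{n}\ln|\pf_n|<\free-c']<1$ for $n\geq n_0(c')$, combined with Corollary \ref{thm:lower-tail} applied with $c'/2$ in place of $c$, bootstraps to the stronger statement: for every $c'>0$ and every $\eta\in(0,1)$, there exists $\hat n(c',\eta)$ such that $\p[\tfrac{1}{n}\ln|\pf_n|<\free-c']<\eta$ for all $n\geq \hat n$.

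Now fix $c>0$ and some $\beta\in(0,1)$. For $N$ large I choose the $N$-dependent triple $\delta_N:=N^\beta$, $k_N:=\lfloor \tfrac{3cN/4}{\free-c/4+\delta_N}\rfloor$ (so $k_N\asymp N^{1-\beta}$), $n_N:=N-k_N$, together with $\bigfree_N:=n_N(\free-c/4)$. These parameters are designed so that (i) elementary algebra gives $\bigfree_N - k_N\delta_N \geq N(\free-c)$, (ii) the bootstrapped bound above supplies $\p[\ln|\pf_{n_N}|<\bigfree_N]<1/2$ (by taking $\eta=1/2$ and observing $n_N\to\infty$), and (iii) $k_N,\delta_N\to\infty$. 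Applying the (unnumbered) lemma at the start of Section \ref{sec:lower-tails} with these parameters gives
\[
\p\left[\tfrac{1}{N}\ln|\pf_N|<\free-c\right] \;\leq\; \p\left[\ln|\pf_N|<\bigfree_N-k_N\delta_N\right]\;\leq\;\varepsilon_{k_N},
\]
where $\varepsilon_k=\varepsilon_{k-1}^b+\nu_N$ is the recursion appearing in that proof, with $\nu_N := \sup_{z\in\mathbb{C}}\p[\xi\in B(z;e^{-\delta_N})]$ and $\varepsilon_0=1/2$.

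It remains to estimate $\varepsilon_{k_N}$, which is where the $\tau$-condition enters. It gives $\nu_N\leq Ce^{-\tau N^\beta}$, so for $N$ large $\nu_N$ is arbitrarily small. For such small $\nu$, the stable fixed point $x_\nu$ of $x\mapsto x^b+\nu$ satisfies $x_\nu\leq 2\nu$, the initial value $1/2$ lies well inside its basin of attraction, and the iteration contracts super-exponentially (in fact doubly exponentially, since $b\geq 2$) until reaching a neighborhood of $x_\nu$. A standard analysis thus yields
\[
\varepsilon_{k_N}\;\leq\;2x_{\nu_N} + e^{-\alpha k_N}\;\lesssim\;e^{-\tau N^\beta}+e^{-\alpha' N^{1-\beta}},
\]
for suitable $\alpha,\alpha'>0$; both terms are summable in $N$, which closes the Borel--Cantelli argument.

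The main obstacle I anticipate is the simultaneous tuning of $(n_N,k_N,\delta_N)$. The deterministic inequality $\bigfree_N-k_N\delta_N\geq N(\free-c)$ forces $k_N\delta_N\lesssim c\,n_N$, while we need $\delta_N\to\infty$ so that the $\tau$-condition makes $\nu_N$ summable, and simultaneously $k_N\to\infty$ so that the iteration $\varepsilon_k$ has time to reach the vicinity of $x_{\nu_N}$. The scale $\delta_N=N^\beta$ with $\beta\in(0,1)$ threads this needle, but the quantitative convergence rate $\varepsilon_k\to x_{\nu}$, uniformly as $\nu\to 0$, is the subtle technical estimate that actually delivers a summable bound rather than merely a vanishing one.
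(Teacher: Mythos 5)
Your proof is correct, and it takes a genuinely different route from the paper's. The paper decomposes $\pf_{l+(m+1)k}$ at generation $l+k$, splitting $\ln|\pf_{l+(m+1)k}| = \ln|A_{k,l}| + \ln|\pf_{mk,*}^{l+k}|$, and applies Borel--Cantelli to each term: the maximum over $b^{l+k}$ i.i.d.\ sub-trees gives a doubly-exponentially small tail for $\ln|\pf_{mk,*}^{l+k}|$, while the $\tau$-condition controls $A_{k,l}$. This forces a restriction to arithmetic progressions $n=l+(m+1)k$ (removed by varying $l$), and it crucially uses the moment part $\esp[|\xi|^{-\tau}]<\infty$ of the $\tau$-condition, because the conditional ball radius for the top weight $\xi(x)$ gets rescaled by $\prod_{t<l+k}|\xi(x_t)|^{-1}$. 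You instead re-run the one-level bootstrapping lemma at the start of Section \ref{sec:lower-tails} with $N$-dependent parameters $\delta_N\sim N^\beta$, $k_N\sim N^{1-\beta}$, and track quantitatively how the cap $\nu_N=\sup_z\p[\xi\in B(z;e^{-\delta_N})]\leq Ce^{-\tau N^\beta}$ controls the recursion $\varepsilon_k=\varepsilon_{k-1}^b+\nu_N$. Because at each single step the coefficient of the selected weight is $\pf_{n,x}/\pf_n^*$, which has unit modulus, only the density part of the $\tau$-condition is invoked; you never need $\esp[|\xi|^{-\tau}]<\infty$, so your argument in fact proves the lemma under a strictly weaker hypothesis. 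The trade-off is that the descent $\varepsilon_k\downarrow x_{\nu_N}$ must be made quantitative and uniform in $\nu$. This does go through: the contraction from $\varepsilon_0=1/2$ is doubly exponential since $b\geq 2$, so the $O(\log N)$ iterations needed to reach the floor $\asymp\nu_N$ are comfortably supplied by $k_N\sim N^{1-\beta}$, after which $\varepsilon_k$ stays bounded by $2\nu_N$; but this step is more delicate than anything the paper's proof requires, and it must be spelled out for the final summable bound $\varepsilon_{k_N}\lesssim e^{-\tau N^\beta}$ to be rigorous.
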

%%%%%%%%%%%%%%%%%%%%%%%%%%%%%%%%%%%%%%%%%%%%%%%%%%%%%%%%%%%%%%%
%%%%%%%%%%%%%%%%%%%%%%%%%%%%%%%%%%%%%%%%%%%%%%%%%%%%%%%%%%%%%%%
\begin{proof}
	Let $c>0$. 
	Let $m\geq 1$ to be fixed later and $l\in\{0,\cdots,m\}$. We decompose
	\begin{eqnarray*}
		\pf_{l+(m+1)k}
		=
		\sum_{x\in\mathbb{T}_{l+k}}
			\xi_{l+k}(x) \pf^{l+k}_{mk,x},
	\end{eqnarray*}
	where, if we identify $x\in\mathbb{T}_{l+k}$ with the path $(x_0,\cdots,x_{k+l})\in\walks_{k+l}$ with $x_{k+l}=x$,
	\begin{eqnarray*}
		\xi_{l+k}(x) = \prod^{k+l}_{t=1}\xi(x_t).
	\end{eqnarray*}
	We let
	\begin{eqnarray*}
		\pf_{mk,*}^{l+k}
		=
		\max
		\left\{
			\left| \pf^{l+k}_{mk,x} \right|:\, x\in\mathbb{T}_{l+k}
		\right\},
		\qquad
		A_{k,l}
		=
		\frac{\pf_{l+(m+1)k}}{ \pf_{mk,*}^{l+k} }.
	\end{eqnarray*}
	This way,
	\begin{eqnarray*}
		\ln \left|\pf_{l+(m+1)k} \right|
		=
		\ln \left|A_{k,l} \right|
		+
		\ln \left|\pf_{mk,*}^{l+k} \right|.
	\end{eqnarray*}
	Now, by hypothesis,
	\begin{eqnarray*}
		\p\left[
			\ln \left|\pf_{mk,*}^{l+k} \right| < mk(\free - c)
		\right]
		\leq
		p^{b^{l+k}},
	\end{eqnarray*}
	for some $p\in(0,1)$.
	Hence, by Borel-Cantelli, there exists an almost surely finite $k_0=k_0(\xi)\geq 1$ such that
	\begin{eqnarray*}
		\ln \left|\pf_{mk,*}^{l+k} \right| \geq mk(\free - c),
		\qquad
		\forall \, k \geq k_0.
	\end{eqnarray*}
	Hence, for $k\geq k_0$, it holds that
	\begin{eqnarray*}
		\frac{1}{l+(m+1)k}
		\ln |\pf_{l+(m+1)k}|
		&\geq&
		\frac{1}{l+(m+1)k}
		\ln |A_{k,l}|
		+
		\frac{mk}{l+(m+1)k}(\free-c)
		\\
		&=&
		\free-c
		+
		\frac{1}{l+(m+1)k}
		\ln |A_{k,l}|
		-
		\frac{l+k}{l+(m+1)k}(\free-c)
		\\
		&\geq&
		\free-2c
		+
		\frac{1}{l+(m+1)k}
		\ln |A_{k,l}|,
	\end{eqnarray*}
	for $k,m\geq 1$ large enough (for instance, if $m > \lfloor \frac{2+c}{c}\rfloor$ and $k\geq m$). Hence,
	\begin{eqnarray*}
		\liminf_{k \to \infty}
		\frac{1}{l+(m+1)k}
		\ln \left|\pf_{l+(m+1)k} \right|
		&\geq&
		\free-2c
		+
		\liminf_k	
		\frac{1}{l+(m+1)k}
		\ln |A_{k,l}|.
	\end{eqnarray*}
	Now, let $\kappa>0$. Let $x_*\in\mathbb{T}_{l+k}$ be such that $ \left| \pf_{mk,x_*}^{l+k} \right| = \left| \pf_{mk,*}^{l+k} \right|$. Then,
	\begin{eqnarray}
	\label{eq:estimate-A}
		\p\left[
			\left| A_{k,l} \right|<\kappa^{\tau(k+l)}
		\right]
		=
		\sum_{x\in\mathbb{T}_{l+k}}
		\p
		\left[
			\left.
			\left| A_{k,l} \right|<\kappa^{\tau(k+l)} 
			\right|
			\ x_*=x
		\right]
		\p[x_* = x].
	\end{eqnarray}
	Now,
	\begin{align*}
		\p
		\left[
		\left.
			|A_{k,l}|<\kappa^{l+k}
		\right| 
		\ x_*=x
		\right]
		&=
		\p\left[
			\xi_{l+k}(x)
			\in 
			B \left( z(\xi);\kappa^{l+k} \right)
		\right]
		\\
		&=
		\p
		\left[
			\xi(x) \cdot \prod^{l+k-1}_{t=1}\xi(x_t)
			\in 
			B \left( z(\xi);\kappa^{l+k} \right)
		\right],
	\end{align*}
	where $z(\xi)$ is independent of $\xi(x)$. Hence, with $\displaystyle \tilde{\xi}(x)=\prod^{l+k-1}_{t=1}\xi(x_t)$,
	\begin{align*}
		\p
		\left[
		\left.
			|A_{k,l}|<\kappa^{l+k}
		\right| \ x_*=x
		\right]
		&=
		\esp\left[
			\p\left[
				\xi(x) \in B\left( \tilde{\xi}(x)^{-1}z(\xi); \left| \tilde{\xi}(x) \right|^{-1}\kappa^{l+k}\right)
			\Big{|}
				\xi(x_1),\ldots,\xi(x_{l+k-1})
			\right]
		\right]
		\\
		&\leq
		C
		\esp\left[
			\left| \tilde{\xi}(x) \right|^{-\tau}\kappa^{\tau(l+k)}
		\right]
		\leq
		C \esp[|\xi|^{-\tau}]^{l+k-1} \cdot \kappa^{\tau(l+k)}.
	\end{align*}
	We get the same bound for \eqref{eq:estimate-A} by summing over $x$.
	Hence, if $\kappa$ is small enough, there exists an almost surely finite value $k_1=k_1(\xi)\geq 1$ such that
	\begin{eqnarray*}
		|A_{k,l}| \geq \kappa^{l+k},
		\qquad
		\forall \, k\geq k_1.
	\end{eqnarray*}
	As a consequence,
	\begin{eqnarray*}
		\liminf_{k \to \infty} 
		\frac{1}{l+(m+1)k}
		\ln |A_{k,l}|
		\geq
		\liminf_{k \to \infty}  \frac{l+k}{l+(m+1)k} \ln \kappa
		=
		\frac{1}{m+1}\ln \kappa.
	\end{eqnarray*}
	Hence, if $m$ is large enough,
	\begin{align*}
		\liminf_{k \to \infty} 
		\frac{1}{l+(m+1)k}
		\ln \left| \pf_{l+(m+1)k} \right|
		&\geq
		\free-3c. \qedhere
	\end{align*}
\end{proof}
%%%%%%%%%%%%%%%%%%%%%%%%%%%%%%%%%%%%%%%%%%%%%%%%%%%%%%%%%%%%%%%
%%%%%%%%%%%%%%%%%%%%%%%%%%%%%%%%%%%%%%%%%%%%%%%%%%%%%%%%%%%%%%%

%%%%%%%%%%%%%%%%%%%%%%%%%%%%%%%%%%%%%%%%%%%%%%%%%%%%%%%%%%%%%%%
%%%%%%%%%%%%%%%%%%%%%%%%%%%%%%%%%%%%%%%%%%%%%%%%%%%%%%%%%%%%%%%
%%%%%%%%%%%%%%%%%%%%%%%%%%%%%%%%%%%%%%%%%%%%%%%%%%%%%%%%%%%%%%%
%%%%%%%%%%%%%%%%%%%%%%%%%%%%%%%%%%%%%%%%%%%%%%%%%%%%%%%%%%%%%%%
\subsection{A generic computation}
\label{sec:generic}
%%%%%%%%%%%%%%%%%%%%%%%%%%%%%%%%%%%%%%%%%%%%%%%%%%%%%%%%%%%%%%%
%%%%%%%%%%%%%%%%%%%%%%%%%%%%%%%%%%%%%%%%%%%%%%%%%%%%%%%%%%%%%%%
%%%%%%%%%%%%%%%%%%%%%%%%%%%%%%%%%%%%%%%%%%%%%%%%%%%%%%%%%%%%%%%
%%%%%%%%%%%%%%%%%%%%%%%%%%%%%%%%%%%%%%%%%%%%%%%%%%%%%%%%%%%%%%%
	
Let us see how the arguments from the previous sections will be applied. The following scheme will be used in the proofs of Lemma \ref{thm:convergence-R3} for the region $\mathcal{R}3$ and Lemma \ref{thm:ending-r2} for the region $\mathcal{R}2$, where details will be omitted.

We consider the usual partition function $\pf_n(\xi)$ and a quantity of the form
\begin{eqnarray*}
	W_n(\xi)
	=
	\esp \left[ \left. \left| \pf_n(\xi) \right|^2 \right|  \mathcal{G} \right],
\end{eqnarray*}
where $\mathcal{G} \subset \mathcal{F}$ is some $\sigma$-algebra.
Assume that
\begin{eqnarray}
\label{eq:moments-bound-average}
	\esp
	\left[
	\left. 
			\frac{|\pf_n(\xi)|^{2\eta}}{|W_n(\xi)|^{\eta}}
			\right| 
			\mathcal{G}
	\right]
	\leq
	C,
\end{eqnarray}
almost surely, for some $\eta>1$, some finite constant $C>0$ and all $n\geq 1$. 
%%%%%%%%%%%%%%%%%%%%%%%%%%%%%%%%%%%%%%%%%%%%%%%%%%%%%%%%%%%%%%%
%%%%%%%%%%%%%%%%%%%%%%%%%%%%%%%%%%%%%%%%%%%%%%%%%%%%%%%%%%%%%%%
\begin{proposition}
\label{thm:joint-convergence}
	If the limit
	\begin{eqnarray}
		\lim_{n\to\infty} \frac{1}{2n} \ln W_n(\xi)
		=
		\free
\end{eqnarray}
holds $\p$-almost surely, then the limit
\begin{eqnarray*}
		\lim_{n\to\infty} \frac{1}{n} \ln |\pf_n(\xi)|
		=
		\free
\end{eqnarray*}
holds in probability.
Furthermore, under the $\tau$-condition, this limit holds almost surely.
\end{proposition}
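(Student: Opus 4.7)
The plan is to prove the two inequalities $\limsup \frac{1}{n}\ln|\pf_n(\xi)|\le \free$ and $\liminf \frac{1}{n}\ln|\pf_n(\xi)|\ge \free$ separately, so that the upper bound will in fact hold almost surely without any extra hypothesis and only the lower bound needs the $\tau$-condition to be promoted from convergence in probability to almost sure convergence.

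For the upper bound I would exploit the identity $\esp[|\pf_n(\xi)|^2\mid \mathcal{G}]=W_n(\xi)$ and apply the conditional Markov inequality to get $\p[|\pf_n|^2>e^{\delta n}W_n\mid \mathcal{G}]\le e^{-\delta n}$ for every $\delta>0$. A Borel-Cantelli argument combined with the assumed almost sure convergence $\frac{1}{2n}\ln W_n \to \free$ immediately yields $\limsup \frac{1}{n}\ln|\pf_n|\le \free$ almost surely.

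For the lower bound I would first apply the conditional Paley-Zygmund inequality of Appendix \ref{app:PZ} to $X=|\pf_n(\xi)|^2$, using $\esp[X\mid \mathcal{G}]=W_n$ together with the assumption $\esp[X^\eta\mid \mathcal{G}]\le CW_n^\eta$. This should produce a constant $c_0>0$ depending only on $\eta$ and $C$ such that $\p[|\pf_n|^2\ge \tfrac{1}{2} W_n\mid \mathcal{G}]\ge c_0$ uniformly in $n$. Intersecting with the high-probability event $\{W_n\ge e^{(2\free-c)n}\}$, available from the a.s.\ convergence of $\frac{1}{2n}\ln W_n$, this should give, for any fixed $c>0$ and $n$ large,
\begin{eqnarray*}
	\p\left[ \tfrac{1}{n}\ln|\pf_n(\xi)| < \free-c \right] \;<\; 1 - \tfrac{c_0}{2} \;<\; 1.
\end{eqnarray*}

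To close the argument I would feed this positive-probability lower tail into Corollary \ref{thm:lower-tail}, whose bootstrap mechanism turns a uniform bound of this form at threshold $c$ into an arbitrarily small probability at threshold $2c$, yielding convergence in probability. Under the $\tau$-condition, the very same positive-probability bound is exactly the hypothesis of Lemma \ref{thm:as-lower-bound}, which upgrades it to $\liminf \frac{1}{n}\ln|\pf_n|\ge \free$ almost surely and completes the a.s.\ statement. The only genuinely delicate step is the conditional Paley-Zygmund estimate, since we need the lower bound $c_0$ on $\p[|\pf_n|^2\ge \tfrac12 W_n\mid \mathcal{G}]$ to be uniform in $n$ and $\omega$; once that is in hand, the tail-improvement machinery of Sections \ref{sec:lower-tails} and \ref{sec:as-lower-bounds} does everything else.
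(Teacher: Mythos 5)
Your proposal is correct and follows essentially the same route as the paper: conditional Markov/Chebyshev plus Borel--Cantelli for the almost-sure upper bound, and conditional Paley--Zygmund feeding into Corollary \ref{thm:lower-tail} (respectively Lemma \ref{thm:as-lower-bound} under the $\tau$-condition) for the lower bound. The only deviation is that for the upper bound you apply Markov directly to the conditional first moment $\esp[|\pf_n|^2\mid\mathcal{G}]=W_n$ to get exponential decay, whereas the paper applies Chebyshev to the $\eta$-th conditional moment to get polynomial decay $n^{-\eta}$; both are summable and the overall argument is unchanged.
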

%%%%%%%%%%%%%%%%%%%%%%%%%%%%%%%%%%%%%%%%%%%%%%%%%%%%%%%%%%%%%%%
%%%%%%%%%%%%%%%%%%%%%%%%%%%%%%%%%%%%%%%%%%%%%%%%%%%%%%%%%%%%%%%
\begin{proof}
We remove $\xi$ from the notation.
Integrating \eqref{eq:moments-bound-average} and applying Chebyshev's inequality, we obtain
\begin{align*}
	\p\left[
		\left| \pf_n \right|^2 \geq nW_n
	\right]
	\leq
	\frac{C}{n^{\eta}},
\end{align*}
for some finite $C>0$,
so that, by Borel-Cantelli, there exists an almost surely finite quantity $C(\omega)>0$ such that
\begin{align*}
	|\pf_n|^2 \leq C(\omega)nW_n,
\end{align*}
for all $n\geq 1$. Hence,
\begin{eqnarray}
\label{eq:estimate-pf-average}
	\limsup_{n\to\infty} \frac{1}{n} \ln |\pf_n|^2
	\leq
	\lim_{n\to\infty} \frac{1}{n} \ln W_n
	=
	\free,
	\quad\quad
	\p-\text{almost surely.}
\end{eqnarray}
To obtain a lower bound, let $c>0$. By Paley-Zygmund's inequality (Lemma \ref{thm:Paley-Zygmund} in Appendix \ref{app:PZ}), \eqref{eq:moments-bound-average} also implies that there exists $p\in(0,1)$ such that
\begin{eqnarray}
\label{eq:PZ}
	\p
	\left[
		\left| \pf_n \right|^2 \geq e^{-cn} W_n		
	\right]
	\geq
	1-p
	>
	0,
\end{eqnarray}
for all $n\geq 1$.  This entails that
\begin{eqnarray*}
	\p\left[
		\ln \left| \pf_n \right|^2 < \ln W_n -cn	
	\right]
	\leq p
	<1,
\end{eqnarray*}
for all $n\geq 1$. Hence, if $p<p'<1$,
\begin{align*}
	\p
	\left[
		\ln \left| \pf_n \right|^2 < n\free -2cn	
	\right]
	&=
	\p\left[
		\ln \left| \pf_n \right|^2 < n\free -2cn	, \, \ln W_n \geq n(\free-c)
	\right]
	\\
	&\quad+
	\p\left[
		\ln \left| \pf_n \right|^2 < n\free -2cn	, \, \ln W_n< n(\free-c)
	\right]
	\\
	&\leq
	\p\left[
		\ln \left| \pf_n \right|^2 <  \ln W_n -cn
	\right]
	+
	\p\left[
		\ln W_n < n(\free-c)
	\right]
	\leq
	p',
\end{align*}
for $n$ large enough. By Corollary \ref{thm:lower-tail}, we can make the left hand side as small as we want by taking $n$ large enough. This yields convergence in probability. Under the $\tau$-condition, we can apply Lemma \ref{thm:as-lower-bound} to obtain the almost sure lower bound. \qedhere
\end{proof}
%%%%%%%%%%%%%%%%%%%%%%%%%%%%%%%%%%%%%%%%%%%%%%%%%%%%%%%%%%%%%%%
%%%%%%%%%%%%%%%%%%%%%%%%%%%%%%%%%%%%%%%%%%%%%%%%%%%%%%%%%%%%%%%
 
%%%%%%%%%%%%%%%%%%%%%%%%%%%%%%%%%%%%%%%%%%%%%%%%%%%%%%%%%%%%%%%
%%%%%%%%%%%%%%%%%%%%%%%%%%%%%%%%%%%%%%%%%%%%%%%%%%%%%%%%%%%%%%%
%%%%%%%%%%%%%%%%%%%%%%%%%%%%%%%%%%%%%%%%%%%%%%%%%%%%%%%%%%%%%%%
%%%%%%%%%%%%%%%%%%%%%%%%%%%%%%%%%%%%%%%%%%%%%%%%%%%%%%%%%%%%%%%
%%%%%%%%%%%%%%%%%%%%%%%%%%%%%%%%%%%%%%%%%%%%%%%%%%%%%%%%%%%%%%%
%%%%%%%%%%%%%%%%%%%%%%%%%%%%%%%%%%%%%%%%%%%%%%%%%%%%%%%%%%%%%%%
%%%%%%%%%%%%%%%%%%%%%%%%%%%%%%%%%%%%%%%%%%%%%%%%%%%%%%%%%%%%%%%
%%%%%%%%%%%%%%%%%%%%%%%%%%%%%%%%%%%%%%%%%%%%%%%%%%%%%%%%%%%%%%%
\section{The region $\mathcal{R}1$.}
\label{sec:R1}
%%%%%%%%%%%%%%%%%%%%%%%%%%%%%%%%%%%%%%%%%%%%%%%%%%%%%%%%%%%%%%%
%%%%%%%%%%%%%%%%%%%%%%%%%%%%%%%%%%%%%%%%%%%%%%%%%%%%%%%%%%%%%%%
%%%%%%%%%%%%%%%%%%%%%%%%%%%%%%%%%%%%%%%%%%%%%%%%%%%%%%%%%%%%%%%
%%%%%%%%%%%%%%%%%%%%%%%%%%%%%%%%%%%%%%%%%%%%%%%%%%%%%%%%%%%%%%%
We assume that there exists $\theta \in (1,2]$ such that $G(\theta)<\ln(b|\esp[\xi]|)$.
In particular, $\esp[\xi]\neq 0$.  In this case, the sequence $(\mart_n(\xi))_{n \ge 1}$ is a uniformly integrable martingale and is then convergent in $\mathbb{L}^1$ and almost surely. Let
\begin{eqnarray*}
	\mart_{\infty}(\xi)
	=
	\lim_{n\to\infty} \mart_n(\xi).
\end{eqnarray*}
The following lemma will allow us to compute the free energy in this regime.
%%%%%%%%%%%%%%%%%%%%%%%%%%%%%%%%%%%%%%%%%%%%%%%%%%%%%%%%%%%%%%%
%%%%%%%%%%%%%%%%%%%%%%%%%%%%%%%%%%%%%%%%%%%%%%%%%%%%%%%%%%%%%%%
\begin{lemma}
	\label{thm:zero-one-law}
	Assume that there exists $\theta \in (1,2]$ such that $G(\theta)<\ln(b|\esp[\xi]|)$ and that the law of $\xi$ is continuous. Then,
	\begin{eqnarray*}
		\p\left[
			\mart_{\infty}(\xi)=0
		\right]
		=
		0 \,\, \text{or} \,\, 1.
	\end{eqnarray*}
\end{lemma}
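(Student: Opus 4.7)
The plan is to exploit the branching recursion satisfied by $\mart_\infty(\xi)$ together with the continuity of the law of $\xi$ in order to derive a fixed-point equation for $q:=\p[\mart_\infty(\xi)=0]$.

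First, because $G(\theta)<\ln(b|\esp[\xi]|)$ for some $\theta\in(1,2]$, Proposition \ref{thm:uniform-integrability} (equivalently, the first bound of Lemma \ref{thm:moment-estimates}) gives $\sup_n\esp[|\mart_n(\xi)|^\theta]<\infty$, so $(\mart_n(\xi))$ is uniformly integrable and converges both $\p$-a.s.\ and in $\mathbb{L}^1$ to $\mart_\infty(\xi)$. Denoting by $\pf^{(x)}_n$ the partition function computed on the subtree rooted at $x\in\T_1$ and by $\mart^{(x)}_n=\pf^{(x)}_n/(b^n\mom_1^n)$ the corresponding normalized variable, the identity $\pf_{n+1}(\xi)=\sum_{x\in\T_1}\xi(x)\pf^{(x)}_n$ yields the recursion
\begin{equation*}
\mart_{n+1}(\xi)=\frac{1}{b\mom_1}\sum_{x\in\T_1}\xi(x)\,\mart^{(x)}_n,
\end{equation*}
where the $\mart^{(x)}_n$, $x\in\T_1$, are i.i.d.\ copies of $\mart_n(\xi)$ independent of $\{\xi(x):x\in\T_1\}$. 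Passing to the $\p$-a.s.\ limit gives
\begin{equation*}
\mart_\infty(\xi)=\frac{1}{b\mom_1}\sum_{x\in\T_1}\xi(x)\,\mart^{(x)}_\infty.
\end{equation*}

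The crux is the following elementary observation: for any deterministic $(m_1,\dots,m_b)\in\C^b\setminus\{0\}$ and i.i.d.\ complex random variables $\xi_1,\dots,\xi_b$ with non-atomic common law, one has $\p[\sum_j m_j\xi_j=0]=0$. Indeed, relabel so that $m_1\neq 0$; then conditioning on $(\xi_2,\dots,\xi_b)$ reduces the event to $\{\xi_1=c(\xi_2,\dots,\xi_b)\}$, which has probability zero by the non-atomicity of the law of $\xi_1$, and the claim follows by integration. Applied to the recursion above, conditionally on $(\mart^{(x)}_\infty)_{x\in\T_1}$, the event $\{\mart_\infty(\xi)=0\}$ coincides $\p$-a.s.\ with $\{\mart^{(x)}_\infty=0\text{ for all }x\in\T_1\}$, so by independence of the subtree limits,
\begin{equation*}
q=\p[\mart_\infty(\xi)=0]=\prod_{x\in\T_1}\p[\mart^{(x)}_\infty=0]=q^b.
\end{equation*}
Since $b>1$, the only solutions of $q=q^b$ in $[0,1]$ are $q=0$ and $q=1$, which is the desired dichotomy.

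The main conceptual difficulty relative to the classical positive-weights setting is that $\{\mart_\infty(\xi)=0\}$ could a priori arise from nontrivial cancellations among the complex summands $\xi(x)\mart^{(x)}_\infty$; the continuity hypothesis on $\xi$ is precisely what rules out such cancellations with probability zero, and is thus the only nontrivial ingredient beyond the standard branching argument.
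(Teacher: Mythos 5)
Your proof is correct and takes essentially the same route as the paper: the branching recursion for $\mart_\infty(\xi)$, followed by the observation that a nontrivial complex linear combination of i.i.d.\ continuous random variables vanishes with probability zero, yielding $q=q^b$. The only cosmetic difference is that you prove the key lemma by conditioning on all but one of the $\xi(x)$ and invoking non-atomicity, whereas the paper invokes the Lebesgue-null set $\{\sum z_k\omega_k=0\}$ in $\C^b$; both are standard and equivalent in the intended setting.
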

%%%%%%%%%%%%%%%%%%%%%%%%%%%%%%%%%%%%%%%%%%%%%%%%%%%%%%%%%%%%%%%
%%%%%%%%%%%%%%%%%%%%%%%%%%%%%%%%%%%%%%%%%%%%%%%%%%%%%%%%%%%%%%%
We defer the proof to the end of the section and compute the free energy in the region $\mathcal{R}1$.

%%%%%%%%%%%%%%%%%%%%%%%%%%%%%%%%%%%%%%%%%%%%%%%%%%%%%%%%%%%%%%%
%%%%%%%%%%%%%%%%%%%%%%%%%%%%%%%%%%%%%%%%%%%%%%%%%%%%%%%%%%%%%%%
\begin{lemma}
	Under the hypothesis of Theorem \ref{thm:main}, it holds that
	\begin{eqnarray*}
		\lim_{n\to\infty}
		\frac{1}{n}\ln |\pf_n(\xi)|
		=
		\ln\left(
			b |\esp[\xi]|
		\right),
		\qquad
		\p-\text{almost surely}.
	\end{eqnarray*}
\end{lemma}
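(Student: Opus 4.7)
The plan is to combine the uniform integrability statement of Proposition \ref{thm:uniform-integrability} with the zero-one law of Lemma \ref{thm:zero-one-law}, following the classical scheme used for positive-weight directed polymers on the tree. Since we are in the region $\mathcal{R}1$, the complex-valued martingale $(\mart_n(\xi))_{n\geq 1}$ is uniformly integrable. Applying Doob's convergence theorem to its real and imaginary parts (both of which are $\mathbb{L}^1$-bounded real martingales), one obtains convergence $\mart_n(\xi) \to \mart_\infty(\xi)$ both $\p$-almost surely and in $\mathbb{L}^1$. In particular,
\begin{eqnarray*}
	\esp[\mart_\infty(\xi)]
	=
	\lim_{n\to\infty}\esp[\mart_n(\xi)]
	=
	1,
\end{eqnarray*}
so $\p[\mart_\infty(\xi)=0]<1$.

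Next, I would invoke Lemma \ref{thm:zero-one-law}, whose hypotheses are satisfied (the uniform continuity of $\mathsf{P}_0$ in Theorem \ref{thm:main} implies in particular that the law of $\xi$ is continuous, and the existence of $\theta\in(1,2]$ with $G(\theta)<\ln(b|\esp[\xi]|)$ is precisely the defining condition of $\mathcal{R}1$). That lemma gives the dichotomy $\p[\mart_\infty(\xi)=0]\in\{0,1\}$, which combined with the previous step forces $\p[\mart_\infty(\xi)=0]=0$. Hence $|\mart_n(\xi)|$ converges $\p$-almost surely to the strictly positive, finite random variable $|\mart_\infty(\xi)|$.

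Finally, using the exact computation $\esp[\pf_n(\xi)]=b^n \mom_1^n$ from Section \ref{sec:basic-recursions}, one has the elementary identity
\begin{eqnarray*}
	\frac{1}{n}\ln|\pf_n(\xi)|
	=
	\ln(b|\esp[\xi]|)
	+
	\frac{1}{n}\ln|\mart_n(\xi)|,
\end{eqnarray*}
valid on the almost sure event $\{\mart_n(\xi)\neq 0\text{ for all }n\text{ large enough}\}$, which holds because $|\mart_n(\xi)|\to|\mart_\infty(\xi)|>0$ almost surely. The second term on the right-hand side then tends to zero almost surely, yielding the claimed limit.

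The only nontrivial ingredient in this outline is Lemma \ref{thm:zero-one-law}; everything else is essentially automatic from the $\mathbb{L}^1$-convergence supplied by Proposition \ref{thm:uniform-integrability}. I expect the main obstacle, therefore, to lie not in the present lemma but rather in the separately-stated zero-one law, which the excerpt defers. Once granted, the computation of the free energy in $\mathcal{R}1$ is immediate and, in fact, delivers the conclusion in the strongest possible mode of convergence, namely $\p$-almost sure, without any further hypothesis on the environment beyond those of Theorem \ref{thm:main}.
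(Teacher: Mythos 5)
Your proof is correct and follows essentially the same route as the paper: uniform integrability gives $\mathbb{L}^1$ and almost sure convergence of the martingale, $\esp[\mart_\infty(\xi)]=1$ rules out $\p[\mart_\infty(\xi)=0]=1$, the zero-one law of Lemma \ref{thm:zero-one-law} then forces $\p[\mart_\infty(\xi)=0]=0$, and the elementary identity $\tfrac{1}{n}\ln|\pf_n(\xi)|=\ln(b|\esp[\xi]|)+\tfrac{1}{n}\ln|\mart_n(\xi)|$ finishes. You are in fact slightly more explicit than the paper about where the zero-one law enters (the paper passes from $\esp[\mart_\infty]=1$ to $\p[\mart_\infty=0]=0$ without re-citing the lemma, relying on its statement immediately beforehand), which is a minor clarity improvement rather than a different argument.
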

%%%%%%%%%%%%%%%%%%%%%%%%%%%%%%%%%%%%%%%%%%%%%%%%%%%%%%%%%%%%%%%
%%%%%%%%%%%%%%%%%%%%%%%%%%%%%%%%%%%%%%%%%%%%%%%%%%%%%%%%%%%%%%%
\begin{proof}
As $(\mart_n(\xi))_n$ is uniformly integrable, it holds that
\begin{eqnarray*}
	\esp[\mart_{\infty}(\xi)]
	=
	\lim_{n\to\infty}\esp[\mart_{n}(\xi)]
	=
	1,
\end{eqnarray*}
so that
\begin{eqnarray*}
		\p\left[
			\mart_{\infty}(\xi)=0
		\right]
		=
		0.
\end{eqnarray*}
Hence,
\begin{eqnarray*}
	\lim_{n\to\infty}
	\frac{1}{n}\ln \mart_n(\xi)
	=
	\lim_{n\to\infty}
	\frac{1}{n}\ln \mart_{\infty}(\xi)
	=
	0,
\end{eqnarray*}
almost surely, which shows that
\begin{equation*}
	\lim_{n\to\infty}
	\frac{1}{n}\ln |\pf_n(\xi)|
	=
	\lim_{n\to\infty}
	\frac{1}{n}\ln | \esp[\pf_n(\xi)]|
	=
	\ln\left(
		b |\esp[\xi]|
	\right). \qedhere
\end{equation*}
\end{proof}
%%%%%%%%%%%%%%%%%%%%%%%%%%%%%%%%%%%%%%%%%%%%%%%%%%%%%%%%%%%%%%%
%%%%%%%%%%%%%%%%%%%%%%%%%%%%%%%%%%%%%%%%%%%%%%%%%%%%%%%%%%%%%%%

%%%%%%%%%%%%%%%%%%%%%%%%%%%%%%%%%%%%%%%%%%%%%%%%%%%%%%%%%%%%%%%
%%%%%%%%%%%%%%%%%%%%%%%%%%%%%%%%%%%%%%%%%%%%%%%%%%%%%%%%%%%%%%%
The following simple lemma is the key to show Lemma \ref{thm:zero-one-law}.
%%%%%%%%%%%%%%%%%%%%%%%%%%%%%%%%%%%%%%%%%%%%%%%%%%%%%%%%%%%%%%%
%%%%%%%%%%%%%%%%%%%%%%%%%%%%%%%%%%%%%%%%%%%%%%%%%%%%%%%%%%%%%%%
\begin{lemma}
	Let $z_1,\ldots,z_m \in \C$ and let $\omega_1,\ldots,\omega_m$ be complex-valued independent random variables with continuous laws. Then,
	\begin{eqnarray*}
		\p\left[
			\sum^m_{k=1} z_k \omega_k=0
		\right]
		=
		0,
	\end{eqnarray*}
	if and only if $z_1=\cdots=z_m=0$.
\end{lemma}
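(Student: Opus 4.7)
The plan is to prove the contrapositive of the nontrivial direction. The statement as written is most naturally read as: if at least one $z_k$ is nonzero, then the event $\{\sum_k z_k\omega_k=0\}$ has probability zero. (The other direction is immediate: if all $z_k$ vanish then the sum is identically $0$, with probability $1$.) So the work is to show that assuming some $z_{k_0}\neq 0$ forces the probability to vanish.

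First, I would reduce to a single-variable statement by conditioning. Without loss of generality let $z_1\neq 0$. By the independence of $\omega_1$ from $(\omega_2,\dots,\omega_m)$, the tower property gives
\begin{align*}
\mathbb{P}\!\left[\sum_{k=1}^m z_k\omega_k=0\right]
=\esp\!\left[\mathbb{P}\!\left[\omega_1=W\ \Big|\ \omega_2,\dots,\omega_m\right]\right],
\end{align*}
where $W := -z_1^{-1}\sum_{k=2}^m z_k\omega_k$ is $\sigma(\omega_2,\dots,\omega_m)$-measurable. The key point I would then use is that the law of $\omega_1$, being continuous on $\C$, has no atoms, so $\mathbb{P}[\omega_1=w]=0$ for every fixed $w\in\C$. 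Therefore the inner conditional probability vanishes almost surely, and integrating yields the claim.

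The main (and only) obstacle is ensuring the interpretation of \emph{continuous law} is the one needed — namely non-atomic, so that singletons are null. This is the same notion of continuity used throughout the paper (it is implicit in the uniform continuity hypothesis of Theorem \ref{thm:main}, and it was assumed in Lemma \ref{thm:zero-one-law} which is the place this lemma is invoked). No measurability or Fubini subtleties arise, since the integrand is bounded by $1$, and no use is made of the specific form of the law of $\omega_1$ beyond the absence of atoms. The argument extends verbatim to any finite linear functional evaluated on independent non-atomic random vectors in $\C$.
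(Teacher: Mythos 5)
Your proof is correct, but it takes a genuinely different route from the paper's. The paper argues geometrically: when $(z_1,\ldots,z_m)\neq 0$, the set $\{(\omega_1,\ldots,\omega_m)\in\C^m:\sum_k z_k\omega_k=0\}$ is a proper affine subspace of $\C^m\cong\R^{2m}$ and so has zero Lebesgue measure, whence the joint law of $(\omega_1,\ldots,\omega_m)$ assigns it probability zero. That step implicitly charges the joint law against a Lebesgue-null set, which requires reading ``continuous law'' as absolute continuity with respect to Lebesgue measure (the convention the paper is working under, given the uniform continuity hypothesis on $\mathsf{P}_0$). Your argument instead isolates one coordinate with $z_{k_0}\neq 0$, freezes the others by independence and the tower property, and uses only that the law of $\omega_{k_0}$ has no atoms, so that $\p[\omega_{k_0}=W\mid \omega_2,\ldots,\omega_m]=0$ almost surely; Fubini then gives the conclusion. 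This is more elementary (no appeal to the geometry of hyperplanes in $\R^{2m}$) and slightly more general: it only needs a single $\omega_{k_0}$ with $z_{k_0}\neq 0$ to be non-atomic, and does not require densities for the joint law. Both proofs are fine in the paper's setting; yours would survive a weakening of the continuity hypothesis that the paper's would not.
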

%%%%%%%%%%%%%%%%%%%%%%%%%%%%%%%%%%%%%%%%%%%%%%%%%%%%%%%%%%%%%%%
%%%%%%%%%%%%%%%%%%%%%%%%%%%%%%%%%%%%%%%%%%%%%%%%%%%%%%%%%%%%%%%
\begin{proof}
	This follows from the fact that, if $(z_1,\cdots,z_m)\neq 0$, the subspace of $\C^m$ defined by
	\begin{eqnarray*}
		\left\{
			( \omega_1,\ldots,\omega_m )\in\C^m: \, \sum^m_{k=1} z_k \omega_k=0
		\right\}
	\end{eqnarray*}
	has zero Lebesgue measure.
\end{proof}
%%%%%%%%%%%%%%%%%%%%%%%%%%%%%%%%%%%%%%%%%%%%%%%%%%%%%%%%%%%%%%%
%%%%%%%%%%%%%%%%%%%%%%%%%%%%%%%%%%%%%%%%%%%%%%%%%%%%%%%%%%%%%%%

%%%%%%%%%%%%%%%%%%%%%%%%%%%%%%%%%%%%%%%%%%%%%%%%%%%%%%%%%%%%%%%
%%%%%%%%%%%%%%%%%%%%%%%%%%%%%%%%%%%%%%%%%%%%%%%%%%%%%%%%%%%%%%%
\begin{proof}[Proof of Lemma \ref{thm:zero-one-law}]
	We have the simple identity
	\begin{eqnarray*}
		\pf_{n+1}(\xi)
		=
		\sum_{\walk\in\walks_{n+1}}
			\xi(\walk_1) \prod^{n+1}_{t=2}\xi(\walk_t)
		=
		\sum_{x\in\T_1} \xi(x) \pf_{n,x}(\xi),
	\end{eqnarray*}
	so that
	\begin{eqnarray*}
		\mart_{n+1}(\xi)
		=
		b^{-1}\sum_{x\in\T_1} \frac{\xi(x)}{\esp[\xi]} \mart_{n,x}(\xi),
	\end{eqnarray*}
	with a, hopefully, self-explaining notation.
	Now, all these martingales are almost surely convergent and we get that
	\begin{eqnarray*}
		\mart_{\infty}(\xi)
		=
		b^{-1}\sum_{x\in\T_1} \frac{\xi(x)}{\esp[\xi]} \mart_{\infty,x}(\xi),
	\end{eqnarray*}
	where the collection $\{\mart_{\infty,x}(\xi):\, x\in\T_1\}$ is i.i.d. with common law agreeing with that of $\mart_{\infty}(\xi)$. Applying the last lemma to the random variables $\left\{ \xi(x)/\esp[\xi]: \ x\in\T_1 \right\}$,
	\begin{eqnarray*}
		\p\left[
			\mart_{\infty}(\xi)=0
		\right]
		=
		\p\left[
			\mart_{\infty,x}(\xi)=0,\, \forall\, x\in\T_1
		\right]
		=
		\p\left[
			\mart_{\infty}(\xi)=0
		\right]^b.
	\end{eqnarray*}
	This finishes the proof.
\end{proof}
%%%%%%%%%%%%%%%%%%%%%%%%%%%%%%%%%%%%%%%%%%%%%%%%%%%%%%%%%%%%%%%
%%%%%%%%%%%%%%%%%%%%%%%%%%%%%%%%%%%%%%%%%%%%%%%%%%%%%%%%%%%%%%%

%%%%%%%%%%%%%%%%%%%%%%%%%%%%%%%%%%%%%%%%%%%%%%%%%%%%%%%%%%%%%%%
%%%%%%%%%%%%%%%%%%%%%%%%%%%%%%%%%%%%%%%%%%%%%%%%%%%%%%%%%%%%%%%
%%%%%%%%%%%%%%%%%%%%%%%%%%%%%%%%%%%%%%%%%%%%%%%%%%%%%%%%%%%%%%%
%%%%%%%%%%%%%%%%%%%%%%%%%%%%%%%%%%%%%%%%%%%%%%%%%%%%%%%%%%%%%%%
%%%%%%%%%%%%%%%%%%%%%%%%%%%%%%%%%%%%%%%%%%%%%%%%%%%%%%%%%%%%%%%
%%%%%%%%%%%%%%%%%%%%%%%%%%%%%%%%%%%%%%%%%%%%%%%%%%%%%%%%%%%%%%%
%%%%%%%%%%%%%%%%%%%%%%%%%%%%%%%%%%%%%%%%%%%%%%%%%%%%%%%%%%%%%%%
%%%%%%%%%%%%%%%%%%%%%%%%%%%%%%%%%%%%%%%%%%%%%%%%%%%%%%%%%%%%%%%

%%%%%%%%%%%%%%%%%%%%%%%%%%%%%%%%%%%%%%%%%%%%%%%%%%%%%%%%%%%%%%%
%%%%%%%%%%%%%%%%%%%%%%%%%%%%%%%%%%%%%%%%%%%%%%%%%%%%%%%%%%%%%%%
%%%%%%%%%%%%%%%%%%%%%%%%%%%%%%%%%%%%%%%%%%%%%%%%%%%%%%%%%%%%%%%
%%%%%%%%%%%%%%%%%%%%%%%%%%%%%%%%%%%%%%%%%%%%%%%%%%%%%%%%%%%%%%%
%%%%%%%%%%%%%%%%%%%%%%%%%%%%%%%%%%%%%%%%%%%%%%%%%%%%%%%%%%%%%%%
%%%%%%%%%%%%%%%%%%%%%%%%%%%%%%%%%%%%%%%%%%%%%%%%%%%%%%%%%%%%%%%
%%%%%%%%%%%%%%%%%%%%%%%%%%%%%%%%%%%%%%%%%%%%%%%%%%%%%%%%%%%%%%%
%%%%%%%%%%%%%%%%%%%%%%%%%%%%%%%%%%%%%%%%%%%%%%%%%%%%%%%%%%%%%%%

\section{The region $\mathcal{R}2$}
\label{sec:R2}
%%%%%%%%%%%%%%%%%%%%%%%%%%%%%%%%%%%%%%%%%%%%%%%%%%%%%%%%%%%%%%%
%%%%%%%%%%%%%%%%%%%%%%%%%%%%%%%%%%%%%%%%%%%%%%%%%%%%%%%%%%%%%%%
%%%%%%%%%%%%%%%%%%%%%%%%%%%%%%%%%%%%%%%%%%%%%%%%%%%%%%%%%%%%%%%
%%%%%%%%%%%%%%%%%%%%%%%%%%%%%%%%%%%%%%%%%%%%%%%%%%%%%%%%%%%%%%%
%%%%%%%%%%%%%%%%%%%%%%%%%%%%%%%%%%%%%%%%%%%%%%%%%%%%%%%%%%%%%%%

%%%%%%%%%%%%%%%%%%%%%%%%%%%%%%%%%%%%%%%%%%%%%%%%%%%%%%%%%%%%%%%%
%%%%%%%%%%%%%%%%%%%%%%%%%%%%%%%%%%%%%%%%%%%%%%%%%%%%%%%%%%%%%%%%
This region will be divided in two parts. The identification of the free energy in each part is given in Lemma \ref{thm:R2-easy-part} and Lemma \ref{thm:ending-r2} respectively.
%%%%%%%%%%%%%%%%%%%%%%%%%%%%%%%%%%%%%%%%%%%%%%%%%%%%%%%%%%%%%%%
%%%%%%%%%%%%%%%%%%%%%%%%%%%%%%%%%%%%%%%%%%%%%%%%%%%%%%%%%%%%%%%
%%%%%%%%%%%%%%%%%%%%%%%%%%%%%%%%%%%%%%%%%%%%%%%%%%%%%%%%%%%%%%%
%%%%%%%%%%%%%%%%%%%%%%%%%%%%%%%%%%%%%%%%%%%%%%%%%%%%%%%%%%%%%%%
\subsection{The region $\amin\leq 1$}
%%%%%%%%%%%%%%%%%%%%%%%%%%%%%%%%%%%%%%%%%%%%%%%%%%%%%%%%%%%%%%%
%%%%%%%%%%%%%%%%%%%%%%%%%%%%%%%%%%%%%%%%%%%%%%%%%%%%%%%%%%%%%%%
%%%%%%%%%%%%%%%%%%%%%%%%%%%%%%%%%%%%%%%%%%%%%%%%%%%%%%%%%%%%%%%
%%%%%%%%%%%%%%%%%%%%%%%%%%%%%%%%%%%%%%%%%%%%%%%%%%%%%%%%%%%%%%%

%%%%%%%%%%%%%%%%%%%%%%%%%%%%%%%%%%%%%%%%%%%%%%%%%%%%%%%%%%%%
%%%%%%%%%%%%%%%%%%%%%%%%%%%%%%%%%%%%%%%%%%%%%%%%%%%%%%%%%%%%
\begin{lemma}
\label{thm:R2-easy-part}
	Under the hypothesis of Theorem \ref{thm:main}, if $\amin<1$, then
	\begin{eqnarray*}
		\lim_{n\to\infty}
		\frac{1}{n}\ln |\pf_n(\xi)|
		=
		G(\amin),
		\qquad
		\p-\text{almost surely}.
	\end{eqnarray*}	
\end{lemma}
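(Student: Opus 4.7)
The plan is to prove matching almost-sure upper and lower bounds on $\tfrac{1}{n}\ln|\pf_n(\xi)|$. The upper bound is elementary: by the triangle inequality,
\[
|\pf_n(\xi)| \leq \sum_{\walk\in\walks_n}\prod_{t=1}^n|\xi(\walk_t)| = \pf_n(|\xi|).
\]
Under the hypothesis $\amin < 1$, the identification $\amin = \beta_c$ from Section \ref{sec:comparison} puts the positive-weights model with environment $|\xi|=e^\omega$ at inverse temperature $1 > \beta_c$, i.e.\ in strong disorder, so the Kahane--Peyri\`ere--Biggins theorem recalled in Section \ref{sec:comparison} yields $\tfrac{1}{n}\ln\pf_n(|\xi|) \to \lambda'(\beta_c) = G(\amin)$ almost surely, which gives $\limsup_n \tfrac{1}{n}\ln|\pf_n(\xi)| \leq G(\amin)$ a.s.

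For the matching lower bound, I would invoke the generic scheme of Proposition \ref{thm:joint-convergence} with the $\sigma$-algebra $\mathcal{G} = \sigma(|\xi(x)|:x\in\T)$ of the moduli and $W_n = \esp[|\pf_n(\xi)|^2 \mid \mathcal{G}]$. Conditionally on $\mathcal{G}$, the phases $\{\theta(x):x\in\T\}$ are independent and continuously distributed, so $\pf_n$ is a degree-$n$ multilinear polynomial in independent complex phases of modulus one; the moment bound \eqref{eq:moments-bound-average} should therefore follow from a conditional hypercontractive estimate with constants uniform in the realization of $\mathcal{G}$, where uniformity is supplied by the uniform-continuity hypothesis on $\mathsf{P}_0$. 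Expanding $|\pf_n|^2$ and taking conditional expectation yields
\[
W_n = \pf_n(|\xi|^2) + \sum_{\walk\neq\walk'}\prod_{t\leq k}|\xi(\walk_t)|^2\prod_{t>k}|\xi(\walk_t)||\xi(\walk'_t)|\phi_{|\xi(\walk_t)|}\overline{\phi_{|\xi(\walk'_t)|}},
\]
where $k = k(\walk,\walk')$ is the last common generation and $\phi_\rho = \esp[e^{\iota\theta}\mid|\xi|=\rho]$. The diagonal term is the partition function of a positive-weights polymer at $\beta = 2 > \beta_c = \amin$, hence in strong disorder, giving $\tfrac{1}{n}\ln\pf_n(|\xi|^2) \to 2G(\amin)$ a.s.

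The main obstacle is therefore twofold. First, I must show that the off-diagonal sum cannot degrade the exponential rate of $W_n$: the upper bound $W_n \leq \pf_n(|\xi|)^2 \sim e^{2nG(\amin)}$ is immediate from the triangle inequality, while a matching lower bound can be extracted by reorganizing the off-diagonal sum, at each branching vertex $v\in\T_k$, as $|\hat{\pf}^{v}_{n-k}|^2$ minus a diagonal correction, where $\hat{\pf}^{v}$ is a partition function built from modified weights $|\xi(x)|\phi_{|\xi(x)|}$; since $|\phi_\rho|\leq 1$, these are pointwise dominated by $|\xi|$, and a direct comparison with $\pf_n(|\xi|)$ controls the off-diagonal contribution. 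Second, to upgrade from convergence in probability (which Proposition \ref{thm:joint-convergence} delivers directly) to almost sure convergence without invoking Lemma \ref{thm:as-lower-bound} (which requires the $\tau$-condition, unavailable here), I would exploit the self-similar structure of the tree: splitting at a fixed depth $m$ into $b^m$ independent sub-trees and applying Borel--Cantelli along the lacunary subsequence $(km)_{k\geq 1}$ converts the probability bound into an almost sure one, in line with the standard strong-disorder treatment of positive-weights polymers on trees.
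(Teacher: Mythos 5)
Your upper bound (triangle inequality plus $\free(|\xi|)=G(\amin)$ in strong disorder) coincides with the paper's, but your lower bound takes a fundamentally different route and has real gaps. The paper's proof is entirely elementary and does not invoke Proposition~\ref{thm:joint-convergence} at all: it writes $|\pf_n(\xi)|^2=\pf_n(|\xi|^2)+E_n(\xi)$ with $E_n(\xi)=\sum_{\walk\neq\walk'}\prod_t \xi(\walk_t)\overline{\xi(\walk'_t)}$, uses the triangle inequality $|E_n(\xi)|\leq E_n(|\xi|)=\pf_n(|\xi|)^2-\pf_n(|\xi|^2)$, and then argues by contradiction: if the interference term caused exponential cancellation along a subsequence, one would deduce $\pf_{n_k}(|\xi|^2)\leq c_1 e^{-c_2 n_k}\pf_{n_k}(|\xi|)^2$ and hence $\free(|\xi|^2)\leq -c_2+2\free(|\xi|)$, contradicting the strong-disorder identity $\free(|\xi|^2)=2\free(|\xi|)$ (which holds precisely because $\amin<1$ forces the modulus polymer, at inverse temperatures $1$ and $2$, to be in strong disorder with $\free(|\xi|^\beta)=\beta\lambda'(\beta_c)$ linear in $\beta$). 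Since everything is deterministic given the realization of $|\xi|$ and the positive-weights free energies converge almost surely, the conclusion is automatically almost sure, with no extra hypothesis.

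Your proposed route through Proposition~\ref{thm:joint-convergence} with $\mathcal{G}=\sigma(|\xi(x)|:x\in\T)$ has two serious problems. First, the moment bound \eqref{eq:moments-bound-average} is only established in the paper (Lemma~\ref{thm:key-independent-phase-and-radii}) under the hypothesis that $\omega$ and $\theta$ are independent, which is \emph{not} assumed in the case $\amin<1$; after conditioning on the moduli the phases are conditionally independent but not identically nor uniformly distributed, and the ``conditional hypercontractive estimate with constants uniform over realizations of $\mathcal{G}$'' you invoke is not supplied by uniform continuity of $\mathsf{P}_0$ and is not a standard fact. Second, even granting the moment bound, Proposition~\ref{thm:joint-convergence} only yields convergence in probability unless the $\tau$-condition holds, which is precisely the extra hypothesis that Lemma~\ref{thm:R2-easy-part} is supposed to avoid; the ``lacunary subsequence $+$ Borel--Cantelli'' fix you sketch is not developed and does not obviously produce the deterministic-rate almost-sure lower bound needed here, because $|\pf_n(\xi)|$ is not monotone in $n$ and the sub-tree contributions are complex and can interfere destructively. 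The lesson worth internalizing is that the region $\amin<1$ is degenerate in the useful sense that $\free(|\xi|^2)=2\free(|\xi|)$ with equality, and this equality alone squeezes out the interference term without any probabilistic second-moment machinery.
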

%%%%%%%%%%%%%%%%%%%%%%%%%%%%%%%%%%%%%%%%%%%%%%%%%%%%%%%%%%%%
%%%%%%%%%%%%%%%%%%%%%%%%%%%%%%%%%%%%%%%%%%%%%%%%%%%%%%%%%%%%
\begin{proof}
Recall that, in this region, it holds that $\free ( \left| \xi \right|^2 ) = 2 \free(|\xi|) = 2 G(\amin)$.
By the triangle inequality,
\begin{eqnarray*}
	|\pf_n(\xi)| \leq \pf_n(|\xi|),
\end{eqnarray*}
so that
\begin{eqnarray}
\label{eq:sd-upper-bound}
	\limsup_{ n \to \infty}
	\frac{1}{n}\ln |\pf_n(\xi)|
	\leq
	\limsup_{ n \to \infty }
	\frac{1}{n}\ln \pf_n(|\xi|)
	=
	G(\amin).
\end{eqnarray}
Next, let
\begin{eqnarray*}
	E_n(\xi)
	=
	\sum_{\walk\neq\walk'\in\walks_n} 
	\prod^n_{t=1} \xi(\walk_t)\overline{\xi(\walk'_t)}.
\end{eqnarray*}
Then,
\begin{eqnarray}
	\nonumber
	|\pf_n(\xi)|^2
	&=&
	\pf_n ( \left| \xi \right|^2 ) +E_n(\xi)
	=
	\pf_n ( \left| \xi \right|^2 )
	\left(
		1+\frac{E_n(\xi)}{\pf_n ( \left| \xi \right|^2)}
	\right)
	\\
	&\geq&
	\label{eq:factorization-lower-bound}
	\pf_n ( \left| \xi \right|^2 )
	\left(
		1+\frac{E_n(\xi)}{\pf_n ( \left| \xi \right| )^2}
	\right).
\end{eqnarray}
Suppose that
\begin{eqnarray*}
	\liminf_{ n \to \infty} 
	\frac{1}{n}\ln
	\left(
		1+\frac{E_n(\xi)}{\pf_n ( \left| \xi \right| )^2}
	\right)
	<
	0.
\end{eqnarray*}
Then, there exist $c_1,c_2>0$ and a subsequence indexed by $(n_k)_{k \ge 1}$ such that
\begin{eqnarray*}
	1+\frac{E_{n_k}(\xi)}{\pf_{n_k}(|\xi|)^2}
	\leq
	c_1 e^{-c_2 n_k},
\end{eqnarray*}
for all $k\geq 1$.
Hence,
\begin{eqnarray*}
	E_{n_k}(\xi)
	\leq 
	\left( c_1 e^{-c_2 n_k}-1 \right)
	\pf_{n_k}(|\xi|)^2,
\end{eqnarray*}
which is negative for $k$ large enough, so that
\begin{eqnarray*}
	\left( 1-c_1 e^{-c_2 n_k} \right)
	\pf_{n_k}(|\xi|)^2
	\leq 
	\left|
		E_{n_k}(\xi)
	\right|
	\leq
	E_{n_k}(|\xi|),
\end{eqnarray*}
by the triangle inequality.
Hence,
\begin{eqnarray*}
	\pf_{n_k} ( \left| \xi \right|^2 )
	+
	\left( 1-c_1 e^{-c_2 n_k} \right)
	\pf_{n_k}(|\xi|)^2
	\leq
	\pf_n ( \left| \xi \right|^2 )
	+
	E_{n_k}(|\xi|)
	=
	\pf_{n_k}(|\xi|)^2,
\end{eqnarray*}
so that
\begin{eqnarray*}
	\pf_{n_k} ( \left| \xi \right|^2 )
	\leq
	c_1e^{-c_2n_k}
	\pf_{n_k}(|\xi|)^2,
\end{eqnarray*}
and, after computing the free energies,
\begin{eqnarray*}
	\free ( \left| \xi \right|^2 )
	\leq
	-c_2+2\free(|\xi|)<2\free(|\xi|),
\end{eqnarray*}
which contradicts the fact that $\free ( \left| \xi \right|^2 )=2\free(|\xi|)=2G(\amin)$.
Hence,
\begin{eqnarray*}
	\liminf_{ n \to \infty} 
	\frac{1}{n}\ln
	\left(
		1+\frac{E_n(\xi)}{\pf_n(|\xi|)^2}
	\right)
	\geq
	0,
\end{eqnarray*}
and, going back to \eqref{eq:factorization-lower-bound}, we conclude that
\begin{eqnarray*}
	\liminf_{n \to \infty} 
	\frac{1}{n}\ln |\pf_n(\xi)|^2
	\geq
	\liminf_{ n \to \infty} 
	\frac{1}{n}\ln \pf_n ( \left| \xi \right|^2 )
	=
	2G(\amin).
\end{eqnarray*}
Together with \eqref{eq:sd-upper-bound}, we obtain the \(\p\)-almost surely limit
\begin{equation*}
	\lim_{n\to\infty} 
	\frac{1}{n}\ln |\pf_n(\xi)|^2
	=
	2G(\amin). \qedhere
\end{equation*}
\end{proof}
%%%%%%%%%%%%%%%%%%%%%%%%%%%%%%%%%%%%%%%%%%%%%%%%%%%%%%%%%%%%%%%
%%%%%%%%%%%%%%%%%%%%%%%%%%%%%%%%%%%%%%%%%%%%%%%%%%%%%%%%%%%%%%%

%%%%%%%%%%%%%%%%%%%%%%%%%%%%%%%%%%%%%%%%%%%%%%%%%%%%%%%%%%%%%%%
%%%%%%%%%%%%%%%%%%%%%%%%%%%%%%%%%%%%%%%%%%%%%%%%%%%%%%%%%%%%%%%
%%%%%%%%%%%%%%%%%%%%%%%%%%%%%%%%%%%%%%%%%%%%%%%%%%%%%%%%%%%%%%%
%%%%%%%%%%%%%%%%%%%%%%%%%%%%%%%%%%%%%%%%%%%%%%%%%%%%%%%%%%%%%%%
\subsection{The region $1<\amin \leq 2$: averaging the phases}
%%%%%%%%%%%%%%%%%%%%%%%%%%%%%%%%%%%%%%%%%%%%%%%%%%%%%%%%%%%%%%%
%%%%%%%%%%%%%%%%%%%%%%%%%%%%%%%%%%%%%%%%%%%%%%%%%%%%%%%%%%%%%%%
%%%%%%%%%%%%%%%%%%%%%%%%%%%%%%%%%%%%%%%%%%%%%%%%%%%%%%%%%%%%%%%
%%%%%%%%%%%%%%%%%%%%%%%%%%%%%%%%%%%%%%%%%%%%%%%%%%%%%%%%%%%%%%%
We write $\xi=e^{\omega+\iota\theta}$ and we suppose that the family of random variables $\omega$ and $\theta$ are independent. First,
\begin{eqnarray*}
	|\pf_n(\xi)|^2
	=
	\sum_{\walk,\walk' \in \walks_n}
		\prod^n_{t=1} \xi(\walk_t)\overline{\xi \left( \walk'_t \right)}
	=
	\sum_{\walk,\walk' \in \walks_n}
		\prod^n_{t=1}e^{\iota \left[ \theta(\walk_t)-\theta \left( \walk'_t \right) \right]}
		\prod^n_{t=1} \left| \xi(\walk_t) \right| \, | \xi \left( \walk'_t \right) |.
\end{eqnarray*}
We will adopt the notation \( \esp \left[ \left. \cdot \right| \omega \right] := \esp \left[ \left. \cdot \right| \sigma \left( \omega (x): \ x \in \mathbb{T} \right) \right] \). Denoting by $\walk \wedge \walk'$ the first time the paths $\walk$ and $\walk'$ separate,
\begin{eqnarray*}
	\esp\left[
		\left. \left| \pf_n(\xi) \right|^2
		\right| \omega
	\right]
	&=&
	\esp
	\left[
	\left. 
	\sum_{\walk,\walk' \in \walks_n}
		\prod^n_{t=1} \xi(\walk_t)\overline{\xi(\walk'_t)}
		\right|  \omega
	\right]
	=
	\esp
	\left[
	\left. 
	\sum_{\walk,\walk' \in \walks_n}
		\prod^n_{t=1}e^{\iota[\theta(\walk_t)-\theta(\walk'_t)]}
		\prod^n_{t=1} |\xi(\walk_t)| | \xi \left( \walk'_t \right) |\,
	\right|  \omega
	\right]
	\\
	&=&
	\sum_{\walk,\walk' \in \walks_n}
		\prod^n_{t=1}
			\esp\left[
				e^{\iota[\theta(\walk_t)-\theta(\walk'_t)]}
			\right]
		\prod^n_{t=1} |\xi(\walk_t)| \, | \xi \left( \walk'_t \right) |
	\\
	&=&
	\sum_{\walk\in\walks_n}
		\prod^n_{t=1}|\xi(\walk_t)|^2
	+
	\sum_{\walk\neq\walk' \in \walks_n}
		\prod_{t>\walk \wedge \walk'}	
			\left|
				\esp\left[
					e^{\iota \theta}
				\right]
			\right|^2	
		\prod^n_{t=1} |\xi(\walk_t)| \, | \xi \left( \walk'_t \right) |
	\\
	&=:&
	\pf_n ( \left| \xi \right|^2 )
	+
	\eterm_n(\omega).
\end{eqnarray*}
Note that $\eterm_n(\omega)\geq 0$ so that
\begin{eqnarray*}
	\esp
	\left[
	\left.
		\left| \pf_n(\xi) \right|^2
	\right|
	\omega
	\right]
	&\geq&
	\pf_n ( \left| \xi \right|^2 ),
\end{eqnarray*}
and, according to the discussion in Section \ref{sec:comparison},
\begin{eqnarray}
\label{eq:free-energy-conditionned-lower-bound}
	\liminf_{n\to\infty}
	\frac{1}{n}\ln \esp
	\left[
	\left.
	\left| \pf_n(\xi) \right|^2
	\right|
	\omega
	\right]
	\geq
	\lim_{n\to\infty}
	\frac{1}{n} \ln \pf_n ( \left| \xi \right|^2 )
	=
	2G(\amin).
\end{eqnarray}
%%%%%%%%%%%%%%%%%%%%%%%%%%%%%%%%%%%%%%%%%%%%%%%%%%%%%%%%%%%%%%%
%%%%%%%%%%%%%%%%%%%%%%%%%%%%%%%%%%%%%%%%%%%%%%%%%%%%%%%%%%%%%%%
The following corresponds to Theorem 4.2 and Theorem 4.1 (a) in \cite{DES-Complex} and is the key estimate to study this regime. We reproduce the proof for the convenience of the reader.
%%%%%%%%%%%%%%%%%%%%%%%%%%%%%%%%%%%%%%%%%%%%%%%%%%%%%%%%%%%%%%%
%%%%%%%%%%%%%%%%%%%%%%%%%%%%%%%%%%%%%%%%%%%%%%%%%%%%%%%%%%%%%%%
\begin{lemma}
\label{thm:key-independent-phase-and-radii}
Assume that $\omega$ and $\theta$ are independent. Then,
\begin{eqnarray*}
	\frac{
	\esp\left[ \left. \left| \pf_n(\xi) \right|^4
	\right|
	\omega
	\right]
	}{\esp \left[ \left. \left| \pf_n(\xi) \right|^2
	\right|
	\omega
	\right]^2}
	\leq 3,
	\qquad
	\forall n\geq 1.
\end{eqnarray*}
Furthermore, in the region $\mathcal{R}1$, there exists $\eta>1$ such that
\begin{align*}
	\sup_{n \ge 1}
	\frac{\esp\left[ \esp \left[ \left. \left| \pf_n(\xi) \right|^2
		\right|
		\omega
		\right]^{\frac{\eta}{2}}\right]}{ \left|\esp \left[ \pf_n(\xi) \right] \right|^{\eta}}
	<
	\infty.
\end{align*}
\end{lemma}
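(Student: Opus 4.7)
The plan is to prove the two estimates by different methods. For the pointwise ratio bound, I would use induction on $n$ via the tree decomposition $\pf_{n+1}(\xi) = \sum_{x\in\mathbb{T}_1} \xi(x) \pf_{n,x}(\xi)$ and the conditional independence, given $\omega$, of the subtree partition functions together with the root-level phases $\{\theta(x): x\in \mathbb{T}_1\}$. Expanding $|\pf_{n+1}|^4$ as a sum over quadruples of paths produces, at the root level, four-point phase correlators of the form $\esp[\varphi(x_1)\varphi(x_2)\overline{\varphi(x_3)\varphi(x_4)}]$ with $\varphi(x) = e^{\iota\theta(x)}$, whereas $|\pf_{n+1}|^2$ produces two-point correlators. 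The key combinatorial input is a Wick-type bound dominating the four-point correlator by at most three products of two-point correlators; combined with the inductive hypothesis applied within each subtree, this yields the constant $3$. The base case $n=1$ is a direct calculation using $|\esp[e^{2\iota\theta}]|\leq 1$ and $|\esp[e^{\iota\theta}]|^2 \leq 1$.

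For the averaged bound in $\mathcal{R}1$, I would set $Y_n := \esp[|\pf_n(\xi)|^2 \,|\, \omega]$ and exploit the recursion
\begin{align*}
Y_n \;=\; \sum_{x \in \mathbb{T}_1} |\xi(x)|^2\, Y_{n-1,x} \;+\; |\nu|^{2n} \sum_{\substack{x,y \in \mathbb{T}_1\\ x \neq y}} |\xi(x)||\xi(y)|\, \pf_{n-1,x}(|\xi|)\, \pf_{n-1,y}(|\xi|),
\end{align*}
which follows from conditional independence in the branching structure together with the identity $\esp[\pf_{n-1,x}(\xi) \,|\, \omega] = \nu^{n-1} \pf_{n-1,x}(|\xi|)$, where $\nu = \esp[e^{\iota\theta}]$. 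Upper-bounding the off-diagonal sum by $\pf_n(|\xi|)^2$, raising to the power $\eta/2 \leq 1$ by subadditivity, and taking $\esp$ yields
\begin{align*}
\esp[Y_n^{\eta/2}] \;\leq\; b\,\esp[|\xi|^\eta]\,\esp[Y_{n-1}^{\eta/2}] \;+\; |\nu|^{n\eta}\,\esp[\pf_n(|\xi|)^\eta].
\end{align*}
Normalizing by $(b|\esp[\xi]|)^{n\eta}$ and using $|\esp[\xi]| = |\nu|\,\esp[|\xi|]$ (which holds since $\omega$ and $\theta$ are independent), the multiplicative coefficient becomes $b\,\esp[|\xi|^\eta]/(b|\esp[\xi]|)^\eta$, which is strictly less than $1$ precisely when $G(\eta) < \ln(b|\esp[\xi]|)$; in $\mathcal{R}1$ such $\eta\in(1,2]$ exists by hypothesis. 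The normalized inhomogeneous term equals $\esp[\pf_n(|\xi|)^\eta]/(b\,\esp[|\xi|])^{n\eta}$, and is bounded uniformly in $n$ by applying Lemma \ref{thm:moment-estimates} to the positive-weight model with environment $|\xi|$, since $G(\eta) < \ln(b|\esp[\xi]|) \leq \ln(b\,\esp[|\xi|])$ as $|\nu|\leq 1$. The contractive recursion then bounds $R_n := \esp[Y_n^{\eta/2}]/(b|\esp[\xi]|)^{n\eta}$ uniformly in $n$.

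The main difficulty is the first bound, where obtaining the sharp constant $3$ requires carefully enumerating the possible coincidence patterns among four vertices at each level (all distinct; one pair; two pairs; triple plus singleton; all equal) and checking that their contributions are dominated by the corresponding terms in $3Y_n^2$; this is where the $\omega\perp\theta$ hypothesis plays an essential role, allowing phase and modulus factors to be treated separately. The second bound, by contrast, reduces to a routine contractive recursion once the correct normalization is identified, with the key algebraic input being $|\esp[\xi]|=|\nu|\,\esp[|\xi|]$.
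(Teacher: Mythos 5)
The second estimate is handled correctly and in fact more explicitly than the paper, which merely tells the reader to ``adapt the argument leading to the first estimate in Lemma \ref{thm:moment-estimates}.'' Your recursion for $Y_n=\esp[|\pf_n(\xi)|^2\mid\omega]$, the contraction via $|\esp[\xi]|=|\nu|\,\esp[|\xi|]$, and the control of the inhomogeneous term by the positive-weight martingale moment bound are all sound and match the intended adaptation in spirit.

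The first estimate is where there is a genuine gap. You propose an \emph{inductive} argument whose key combinatorial input is ``a Wick-type bound dominating the four-point correlator by at most three products of two-point correlators,'' i.e.\ $\bigl|\esp[\varphi_1\overline{\varphi_2}\varphi_3\overline{\varphi_4}]\bigr|\le 3\max_{\text{pairings}}\bigl|\esp[\varphi_i\overline{\varphi_j}]\esp[\varphi_k\overline{\varphi_l}]\bigr|$ for $\varphi_j=e^{\iota\theta(x_j)}$. This bound is false for phase variables. Take $\theta$ uniform on $\{0,\pi\}$: then $\esp[e^{\iota\theta}]=0$ while $\esp[e^{2\iota\theta}]=1$, so for the quadruple $(x_1,x_2,x_3,x_4)=(a,b,a,b)$ with $a\neq b$ the four-point correlator equals $|\esp[e^{2\iota\theta}]|^2=1$ whereas every pairing of two-point correlators vanishes. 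Since the actual lemma does hold in this case, the $3$ cannot come from a root-level Wick inequality. A second, independent issue is that the inductive step does not close as stated: expanding $|\pf_{n+1}|^4$ at the root produces, for coincidence patterns such as $x_1=x_3\neq x_2=x_4$, subtree factors of the form $\esp[\pf_{n,a}^2\mid\omega]\,\overline{\esp[\pf_{n,b}^2\mid\omega]}$ and, for $x_1=x_2=x_3\neq x_4$, factors like $\esp[\pf_{n,a}^2\overline{\pf_{n,a}}\mid\omega]$; these mixed conditional moments are not controlled by the inductive hypothesis on the ratio $\esp[|\pf_n|^4\mid\omega]/\esp[|\pf_n|^2\mid\omega]^2$, so the induction would have to be run with a larger family of quantities, which you do not specify.

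The paper's proof is direct, not inductive. It writes $\esp[|\pf_n|^4\mid\omega]$ and $\esp[|\pf_n|^2\mid\omega]^2$ as sums over quadruples of \emph{full} length-$n$ paths, weighted by $\prod_{j=1}^4|\xi_{\walk^j}|$ (which is permutation-invariant), and symmetrizes over $P_4$. For each quadruple of paths there is a permutation $\sigma$, ordering the paths by when they successively peel off on the tree, for which the conditional four-point phase correlator \emph{factorizes exactly} into a product of two two-point correlators, all of which are nonnegative powers of $|\esp[e^{\iota\theta}]|$; moreover this permutation maximizes the correlator, and at least $8$ of the $24$ permutations achieve it. The constant is therefore $24/8=3$. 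So the mechanism producing the constant $3$ is a tree-geometric statement about when four paths separate, not a Wick-type decoupling of a single four-point expectation, and this is precisely the step where the independence of $\omega$ and $\theta$ enters (it is what makes the phase correlators computable and positive after taking $\esp[\,\cdot\mid\omega]$). You should replace the inductive scheme by this direct symmetrization argument.
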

%%%%%%%%%%%%%%%%%%%%%%%%%%%%%%%%%%%%%%%%%%%%%%%%%%%%%%%%%%%%%%%
%%%%%%%%%%%%%%%%%%%%%%%%%%%%%%%%%%%%%%%%%%%%%%%%%%%%%%%%%%%%%%%
\begin{proof}
	To simplify the notation, we let
	\begin{align*}
		\xi_{\walk}
		=
		\xi_n(\walk)=\prod^n_{t=1}\xi(\walk_t),
		\qquad
		\Gamma_{\walk}
		=
		\Gamma_n(\walk)=\frac{\xi_n(\walk)}{|\xi_n(\walk)|},
	\end{align*}
	for $\walk\in\walks_n$.
	Then,
	\begin{align*}
		\esp
		\left[
		\left.
		\left| \pf_n(\xi) \right|^2
		\right|
		\omega
		\right]
		&=
		\sum_{\walk^1,\walk^2\in\walks_n}
			\left| \xi_{\walk^1}\xi_{\walk^2} \right|
			\esp \left[ \Gamma_{\walk^1}\overline{\Gamma_{\walk^2}} \right],
		\\
		\esp\left[
		\left.
		\left| \pf_n(\xi) \right|^4
		\right|
		\omega
		\right]
		&=
		\sum_{\walk^1,\ldots,\walk^4\in\walks_n}
			\left| \xi_{\walk^1}\xi_{\walk^2}\xi_{\walk^3}\xi_{\walk^4}
			\right|
			\esp
			\left[
			\Gamma_{\walk^1}\overline{\Gamma_{\walk^2}}\Gamma_{\walk^3}\overline{\Gamma_{\walk^4}}
			\right].
	\end{align*}
	Squaring the first identity,
	\begin{align*}
		\esp
		\left[
		\left.
		\left| \pf_n(\xi) \right|^2
		\right|
		\omega
		\right]^2
		&=
		\sum_{\walk^1,\ldots,\walk^4\in\walks_n}
			\left|
			\xi_{\walk^1}\xi_{\walk^2}\xi_{\walk^3}\xi_{\walk^4}
			\right|
			\esp
			\left[
			\Gamma_{\walk^1}\overline{\Gamma_{\walk^2}}
			\right]
			\esp
			\left[
			\Gamma_{\walk^3}\overline{\Gamma_{\walk^4}}
			\right].
	\end{align*}
	Note that the product of the amplitudes remains unchanged by permutations of the four paths so that, denoting the group of permutations over 4 objects by $P_4$,
	\begin{align}
	\nonumber
			\esp \left[
			\left.
			\left| \pf_n(\xi) \right|^4
			\right|
			\omega
			\right]
		&=
		\frac{1}{24}
		\sum_{\walk^1,\ldots,\walk^4\in\walks_n}
			\prod^4_{j=1}
			\left| \xi_{s^j} \right|
			\sum_{\sigma\in P_4}			
			\esp
			\left[
				\Gamma_{\walk^{\sigma(1)}}\overline{\Gamma_{\walk^{\sigma(2)}}}\Gamma_{\walk^{\sigma(3)}}\overline{\Gamma_{\walk^{\sigma(4)}}}
			\right],
		\\
		\label{eq:two-walks}
		\esp
		\left[
		\left.
		\left| \pf_n(\xi) \right|^2
		\right|
		\omega
		\right]^2
		&=
		\frac{1}{24}
		\sum_{\walk^1,\ldots,\walk^4\in\walks_n}
			\prod^4_{j=1}
			\left| \xi_{s^j} \right|
			\sum_{\sigma\in P_4}		
				\esp[\Gamma_{\walk^{\sigma(1)}}\overline{\Gamma_{\walk^{\sigma(2)}}}]
				\esp[\Gamma_{\walk^{\sigma(3)}}\overline{\Gamma_{\walk^{\sigma(4)}}}].
	\end{align}
	Now, note that, for each choice of four paths $\walk^i,\, i=1,\ldots,4$, we can find a permutation $\sigma\in P_4$ such that $\walk^{\sigma(i)}$ is the $i$-th path that separates from the others (with some arbitrary convention if two paths separate at the same time). In this case,
	\begin{align*}
		\esp
		\left[
			\Gamma_{\walk^{1}}
			\overline{\Gamma_{\walk^2}}
			\Gamma_{\walk^{3}}
			\overline{\Gamma_{\walk^{4}}}
		\right]
		\leq
		\esp
		\left[
			\Gamma_{\walk^{\sigma(1)}}\overline{\Gamma_{\walk^{\sigma(2)}}}\Gamma_{\walk^{\sigma(3)}}\overline{\Gamma_{\walk^{\sigma(4)}}}
		\right]
		&=
		\esp
		\left[
		\Gamma_{\walk^{\sigma(1)}}
		\overline{\Gamma_{\walk^{\sigma(2)}}}
		\right]
		\esp
		\left[
		\Gamma_{\walk^{\sigma(3)}}
		\overline{\Gamma_{\walk^{\sigma(4)}}}
		\right]
		=
		\left| \esp[\xi] \right|^{n \left( \walk^1,\walk^2,\walk^3,\walk^4 \right)},
	\end{align*}
	where $n\left(\walk^1,\walk^2,\walk^3,\walk^4 \right)$ is the number of sites visited by an odd number of walks. 
%	We also note that this expression maximizes the left hand side of \eqref{eq:four-walks}. Together with the positivity of the expected values in  \eqref{eq:two-walks},
	Hence,
	\begin{align}
	\label{eq:conditioning-omega-upper-bound}
		\esp
		\left[
		\left.
		\left| \pf_n(\xi) \right|^4
		\right|
		\omega
		\right]
		&\leq 
		\sum_{\walk^1,\ldots,\walk^4\in\walks_n}
			\Big{(}
				\prod^4_{j=1} \left| \xi_{s^j} \right|
			\Big{)}
			\left| \esp[\xi] \right|^{n \left( \walk^1,\walk^2,\walk^3,\walk^4 \right)}.
	\end{align}
	On the other hand, given four paths $\walk^i,\, i=1,\ldots,4$, there are at least eight permutations such that
	\begin{align*}
		\esp
		\left[
		\Gamma_{\walk^{\sigma(1)}}\overline{\Gamma_{\walk^{\sigma(2)}}}\Gamma_{\walk^{\sigma(3)}}\overline{\Gamma_{\walk^{\sigma(4)}}}
		\right]
		&=
		\esp
		\left[
		\Gamma_{\walk^{\sigma(1)}}
		\overline{\Gamma_{\walk^{\sigma(2)}}}
		\right]
		\esp
		\left[
		\Gamma_{\walk^{\sigma(3)}}
		\overline{\Gamma_{\walk^{\sigma(4)}}}
		\right]
		=
		\left| \esp[\xi] \right|^{n \left( \walk^1,\walk^2,\walk^3,\walk^4 \right)},
	\end{align*}
	Together with the positivity of the expected values in  \eqref{eq:two-walks},
		\begin{align*}
			\esp
			\left[
			\left.
			\left| \pf_n(\xi) \right|^2
			\right|
			\omega
			\right]^2
			&\geq
			\frac{8}{24}
			\sum_{\walk^1,\ldots,\walk^4\in\walks_n}
			\Big{(}
				\prod^4_{j=1}|\xi_{s^j}|
			\Big{)}
			\left| \esp[\xi] \right|^{n \left( \walk^1,\walk^2,\walk^3,\walk^4 \right)}.
	\end{align*}
	Together with \eqref{eq:conditioning-omega-upper-bound}, this proves the first estimate. The second one follows by straightforward adaptations of the argument leading to the first estimate in Lemma \ref{thm:moment-estimates}, replacing the conditional probability given $\calf_{n-1}$ in \eqref{eq:aux-moments} by conditioning with respect to $\omega$.
\end{proof}
%%%%%%%%%%%%%%%%%%%%%%%%%%%%%%%%%%%%%%%%%%%%%%%%%%%%%%%%%%%%%%%
%%%%%%%%%%%%%%%%%%%%%%%%%%%%%%%%%%%%%%%%%%%%%%%%%%%%%%%%%%%%%%%

%%%%%%%%%%%%%%%%%%%%%%%%%%%%%%%%%%%%%%%%%%%%%%%%%%%%%%%%%%%%%%%%
%%%%%%%%%%%%%%%%%%%%%%%%%%%%%%%%%%%%%%%%%%%%%%%%%%%%%%%%%%%%%%%%
The next lemma will allow us to identify the free energy in this regime.
%%%%%%%%%%%%%%%%%%%%%%%%%%%%%%%%%%%%%%%%%%%%%%%%%%%%%%%%%%%%%%%
%%%%%%%%%%%%%%%%%%%%%%%%%%%%%%%%%%%%%%%%%%%%%%%%%%%%%%%%%%%%%%%
\begin{lemma}
Assume that $1\leq \amin < 2$, $G(\amin)>\ln(b\esp[\xi]|)$ and, in addition to the hypotheses of Theorem \ref{thm:main}, assume that the random variables $\omega$ and $\theta$ are independent. Then,
\begin{eqnarray*}
	\lim_{n\to\infty}
	\frac{1}{n}\ln \esp \left[ \left. \left| \pf_n(\xi) \right|^2 \right| \omega \right]
	=
	2G(\amin),
	\qquad
	\p-\text{almost surely}.
\end{eqnarray*}
\end{lemma}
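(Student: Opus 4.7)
The lower bound $\liminf_n \frac{1}{n}\ln W_n \geq 2G(\amin)$, where $W_n := \esp[|\pf_n(\xi)|^2 \mid \omega]$, is exactly \eqref{eq:free-energy-conditionned-lower-bound} combined with the positive-weights theorem applied to $|\xi|^2$ (which is in strong disorder since $\amin < 2$); this gives $\frac{1}{n}\ln \pf_n(|\xi|^2) \to 2G(\amin)$ almost surely. The matching upper bound is the substantive step. Setting $c := |\esp[e^{\iota\theta}]|$ and $m_1 := \esp[|\xi|]$, the independence of $\omega$ and $\theta$ gives $b m_1 c = |\esp[\xi]|$, and my plan is to start from the exact expansion (by the last common time $k = \walk \wedge \walk'$ of a pair of paths)
\[
W_n = \sum_{k=0}^n c^{2(n-k)} A_n^{(k)}, \qquad A_n^{(k)} := \sum_{\walk \wedge \walk' = k} \prod_{t=1}^n |\xi(\walk_t)||\xi(\walk'_t)|.
\]
After factoring the common prefix $x \in \mathbb{T}_k$ and dropping the constraint that the pair of splitting children be distinct, one has $A_n^{(k)} \leq \sum_{x \in \mathbb{T}_k} \chi^2(0 \to x)\, F_{n-k,x}(|\xi|)^2$ with $\chi(0 \to x) = \prod_{t=1}^k |\xi(x_t)|$ and $F_{m,x}(|\xi|)$ the positive-weights subtree partition function; writing $F_{m,x}(|\xi|) = (bm_1)^m \mart_{m,x}$, where $\{\mart_{m,x}\}_{x \in \mathbb{T}_k}$ are iid copies of the $|\xi|$-martingale, this reduces everything to an almost-sure upper bound on
\[
R_k := \sum_{x \in \mathbb{T}_k} \chi^2(0 \to x)\, \mart_{n-k,x}^2, \qquad\text{since}\qquad c^{2(n-k)} A_n^{(k)} \leq \bigl(b|\esp[\xi]|\bigr)^{2(n-k)} R_k.
\]

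To bound $R_k$ almost surely, I will use that $\amin < 2$ and Lemma \ref{thm:moment-estimates} applied to the positive-weights model $|\xi|$ furnish an exponent $\theta \in (\amin, 2)$ with $\sup_m \esp[\mart_m^\theta] < \infty$. The sub-additive inequality $(a+b)^{\theta/2} \leq a^{\theta/2} + b^{\theta/2}$ (valid since $\theta/2 \leq 1$) then gives
\[
\esp\bigl[R_k^{\theta/2}\,\big|\,\omega_{\leq k}\bigr] \leq \sum_{x \in \mathbb{T}_k} \chi^\theta(0 \to x)\, \esp[\mart_{n-k,x}^\theta] \leq C\, \pf_k(|\xi|^\theta).
\]
A Markov estimate, a union bound over $k$, and the Borel--Cantelli lemma upgrade this to the almost-sure bound $R_k \leq e^{\varepsilon n}\, \pf_k(|\xi|^\theta)^{2/\theta}$ uniformly in $k \in \{0,\ldots,n\}$, for $n$ large; and since $\theta > \amin$, the polymer with positive weights $|\xi|^\theta$ is in the strong-disorder regime with free energy $\theta\, G(\amin)$, so $\pf_k(|\xi|^\theta)^{2/\theta} \leq e^{2kG(\amin) + \varepsilon k}$ almost surely.

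Plugging back, each term of the decomposition is almost surely bounded by $\exp\{2(n-k)\ln(b|\esp[\xi]|) + 2k G(\amin) + O(\varepsilon n)\}$. The hypothesis $G(\amin) > \ln(b|\esp[\xi]|)$ makes this exponent strictly increasing in $k$, so its maximum over $k$ is attained at $k = n$ with value $2nG(\amin) + O(\varepsilon n)$; summing the $n+1$ terms and sending $\varepsilon \downarrow 0$ yields $\limsup_n \frac{1}{n}\ln W_n \leq 2G(\amin)$. The main obstacle is that the $|\xi|$-martingale need not lie in $\mathbb{L}^2$, so a direct Cauchy--Schwarz estimate through $\pf_k(|\xi|^2)$ would only close the argument when $bc^2 \leq 1$; the sub-additive $\mathbb{L}^\theta$ device above with $\theta \in (\amin, 2)$ is what handles the remaining range of parameters and produces the rate $2G(\amin)$ uniformly in the strength of the phases.
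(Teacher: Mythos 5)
Your proof is correct (modulo the typo $bm_1c = |\esp[\xi]|$, which should read $m_1c = |\esp[\xi]|$; your later formula $c^{2(n-k)}A_n^{(k)} \leq (b|\esp[\xi]|)^{2(n-k)}R_k$ is right) and takes a genuinely different route from the paper. The paper proves the upper bound $\limsup_n \frac{1}{n}\ln W_n \leq 2G(\amin)$ by a continuation-in-$\gamma$ argument: since $W_n = \esp[|\pf_n(\xi)|^2\mid\omega]$ depends on the phase law only through $c = |\esp[e^{\iota\theta}]|$, one may replace $\theta$ by a synthetic two-point phase $\tilde\theta$, observe that $\gamma\mapsto\esp[|\pf_n(e^{\iota\gamma\tilde\theta}|\xi|)|^2\mid\omega]$ is monotone in $\gamma$, and continue from the interior of $\mathcal{R}1$ — where the second estimate of Lemma \ref{thm:key-independent-phase-and-radii} plus Borel--Cantelli bounds $\limsup\frac{1}{n}\ln W_n$ by the annealed free energy — up to the $\mathcal{R}1$-$\mathcal{R}2$ boundary, using continuity of $\fone$ there. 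Your argument instead expands $W_n$ explicitly over the splitting time $k$ of pairs of paths, controls the tail of each term with an $\mathbb{L}^{\theta/2}$ sub-additive moment bound (avoiding the $\mathbb{L}^2$ issue of the $|\xi|$-martingale), and identifies $k=n$ as the dominant term via the hypothesis $G(\amin)>\ln(b|\esp[\xi]|)$. This is more explicit and more informative — it shows directly that the overlap $k=n$ carries the mass of $W_n$ — at the cost of being somewhat longer. One point to make precise: the exponent $\theta$ must satisfy $G(\theta)<G(1)$ (that is the actual condition, extracted from the proof of Lemma \ref{thm:moment-estimates}, for $\sup_m\esp[\mart_m(|\xi|)^\theta]<\infty$), which holds for $\theta$ slightly above $\amin$ precisely when $\amin>1$ strictly. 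Both your proof and the paper's silently require this strict inequality — the paper's construction also needs $\gamma_0>0$, which fails at $\amin=1$ — so neither handles the stated boundary $\amin=1$.
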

%%%%%%%%%%%%%%%%%%%%%%%%%%%%%%%%%%%%%%%%%%%%%%%%%%%%%%%%%%%%%%%
%%%%%%%%%%%%%%%%%%%%%%%%%%%%%%%%%%%%%%%%%%%%%%%%%%%%%%%%%%%%%%%
\begin{proof}
	We already obtained the lower bound in \eqref{eq:free-energy-conditionned-lower-bound}.
	The upper bound requires more work. 
	
	Assume that $1\leq\amin\leq 2$ and write $\xi = e^{\iota \theta} |\xi|=e^{\iota \theta + \omega}$. We will define a new environment $e^{\iota \tilde{\theta}}|\xi|$ such that $\gamma \mapsto \left| \esp \left[e^{\iota \gamma \tilde{\theta}} \right] \right|$ is decreasing in $[0,1]$. For this, we let $t= \left| \esp \left[ e^{\iota \theta} \right] \right|$, $z=t+\iota \sqrt{1-t^2}$ and
	\begin{eqnarray*}
		e^{\iota \tilde{\theta}}
		=
		\begin{cases}
			z, & \text{with probability} \ \frac{1}{2},\\
			\overline{z}, & \text{with probability} \ \frac{1}{2}.
		\end{cases}
	\end{eqnarray*}
	Note that 
	\begin{eqnarray*}
		\esp[|\pf_n(\xi)|^2|\, \omega]
		=
		\esp[|\pf_n(e^{\iota \tilde{\theta}} |\xi|)|^2|\, \omega].
	\end{eqnarray*}
	Now, there exists $0<\gamma_0<1$ such that $\pf_n \left( e^{\iota \gamma \tilde{\theta}} |\xi| \right)$ will be in the region $\mathcal{R}1$ for all $0\leq \gamma < \gamma_0$ and on the $\mathcal{R}1-\mathcal{R}2$ boundary for $\gamma=\gamma_0$. 
	Hence, for $0\leq \gamma < \gamma_0$, the second estimate in Lemma \ref{thm:key-independent-phase-and-radii} and a Borel-Cantelli argument imply
	\begin{align*}
		\limsup_{n\to\infty} \frac{1}{n} 
		\ln \esp
		\left[ \left. \left| \pf_n \left(e^{\iota \gamma \tilde{\theta}} |\xi| \right) \right|^2 \right| \omega \right]
		&\leq
		\lim_{n\to\infty}\frac{1}{n} \ln 
		\left| \esp \left[\pf_n \left(e^{\iota \gamma \tilde{\theta}} |\xi| \right)\right] \right|\\
		&=
		\lim_{n\to\infty}\frac{1}{n} \ln \left| \pf_n \left(e^{\iota \gamma \tilde{\theta}} |\xi| \right) \right|
		=:
		\free(\gamma),
	\end{align*}
	almost surely. Let $\delta>0$. As the model falls into the region $\mathcal{R}2$ when $\gamma>\gamma_0$ and $\lim_{\gamma\to\gamma_0^-}f(\gamma)=2G(\amin)$, there exists $0<\gamma_1<\gamma_0$ such that $\free(\gamma_1)<2G(\amin)+\delta$. Now, by construction, the function
	\begin{eqnarray*}
		[0,1] \ni \gamma \mapsto \esp
		\left[ \left. \left| \pf_n \left(e^{\iota \gamma \tilde{\theta}} |\xi| \right) \right|^2 \right| \omega \right],
	\end{eqnarray*}
	is decreasing. Hence,
	\begin{align*}
		\limsup_{n\to\infty}
		\frac{1}{n} \ln \esp \left[ \left. \left| \pf_n(\xi) \right|^2 \right| \omega \right]
		&=
		\limsup_{n\to\infty} \frac{1}{n} 
		\ln \esp \left[ \left. \left| \pf_n \left(e^{\iota \tilde{\theta}} |\xi| \right) \right|^2 \right| \omega \right]
		\\
		&\leq
		\limsup_{n\to\infty} \frac{1}{n} 
		\ln \esp
		\left[ \left. \left| \pf_n \left(e^{\iota \gamma_1 \tilde{\theta}} |\xi| \right) \right|^2 \right| \omega \right]
		=
		\free(\gamma_1)
		<2G(\amin)+\delta,
	\end{align*}
	almost surely. This finishes the proof. \qedhere
\end{proof}
%%%%%%%%%%%%%%%%%%%%%%%%%%%%%%%%%%%%%%%%%%%%%%%%%%%%%%%%%%%%%%%
%%%%%%%%%%%%%%%%%%%%%%%%%%%%%%%%%%%%%%%%%%%%%%%%%%%%%%%%%%%%%%%
%The next lemma finishes the proof of Theorem \ref{thm:main}.
%%%%%%%%%%%%%%%%%%%%%%%%%%%%%%%%%%%%%%%%%%%%%%%%%%%%%%%%%%%%%%%
%%%%%%%%%%%%%%%%%%%%%%%%%%%%%%%%%%%%%%%%%%%%%%%%%%%%%%%%%%%%%%%
\begin{lemma}
\label{thm:ending-r2}
Assume that $1\leq \amin < 2$, $G(\amin)>\ln(b\esp[\xi]|)$ and, in addition to the hypotheses of Theorem \ref{thm:main}, assume that the random variables $\omega$ and $\theta$ are independent. Then,
\begin{eqnarray*}
		\lim_{n\to\infty}
		\frac{1}{n}\ln |\pf_n(\xi)|
		=
		G(\amin),
	\end{eqnarray*}
	in probability. 
	Under the $\tau$-condition, the convergence holds $\p$-almost surely.
\end{lemma}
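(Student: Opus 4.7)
The plan is to apply the generic scheme of Proposition \ref{thm:joint-convergence} with the auxiliary $\sigma$-algebra $\mathcal{G} = \sigma(\omega(x):\, x\in\mathbb{T})$ generated by the radii, so that
\begin{eqnarray*}
	W_n(\xi) = \esp\left[\left.\left|\pf_n(\xi)\right|^2\right|\omega\right].
\end{eqnarray*}
Both hypotheses of the proposition are already at hand in this region: the conditional fourth-to-second moment bound comes from the first estimate of Lemma \ref{thm:key-independent-phase-and-radii}, and the identification of the free energy of $W_n(\xi)$ was established in the lemma immediately preceding the statement.

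More precisely, I will first observe that, since $W_n(\xi)$ is $\omega$-measurable, the assumption \eqref{eq:moments-bound-average} with $\mathcal{G}=\sigma(\omega)$ and $\eta=2$ reduces to
\begin{eqnarray*}
	\esp\left[\left.\frac{|\pf_n(\xi)|^4}{W_n(\xi)^2}\right|\omega\right]
	=
	\frac{\esp\left[\left.|\pf_n(\xi)|^4\right|\omega\right]}{\esp\left[\left.|\pf_n(\xi)|^2\right|\omega\right]^2}
	\leq
	3,
\end{eqnarray*}
which is exactly the first inequality of Lemma \ref{thm:key-independent-phase-and-radii} (valid without extra assumptions beyond the independence of $\omega$ and $\theta$). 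This is the qualitative input that lets us compare $|\pf_n(\xi)|^2$ with the conditional second moment $W_n(\xi)$ via the second-moment method.

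Second, I will invoke the previous lemma to get
\begin{eqnarray*}
	\lim_{n\to\infty}\frac{1}{2n}\ln W_n(\xi) = G(\amin),
	\qquad \p\text{-almost surely}.
\end{eqnarray*}
With both hypotheses verified, Proposition \ref{thm:joint-convergence} directly gives convergence in probability of $\frac{1}{n}\ln|\pf_n(\xi)|$ to $\free=G(\amin)$, and upgrades this to almost-sure convergence under the $\tau$-condition via the almost-sure lower bound of Lemma \ref{thm:as-lower-bound}.

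I expect no serious obstacle at this stage: the heavy lifting has already been carried out, namely (i) the conditional fourth-moment control, whose proof relies on the independence of $\omega$ and $\theta$ via the path-counting identity in Lemma \ref{thm:key-independent-phase-and-radii}, and (ii) the identification of the conditional free energy, which combined the easy lower bound $W_n(\xi)\geq \pf_n(|\xi|^2)$ with the subtle monotonicity argument in $\gamma$ (perturbing the phase to a two-atom distribution and using that the $\mathcal{R}1$ region yields the annealed rate). The only point that requires a brief verification is that one may indeed take $\eta=2$ in the application of Proposition \ref{thm:joint-convergence}; this is immediate since the bound above is deterministic and uniform in $n$. The proof therefore amounts to checking that every ingredient of the generic scheme is in place and quoting Proposition \ref{thm:joint-convergence}.
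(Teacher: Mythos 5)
Your proposal is correct and follows exactly the same route as the paper's own (very terse) proof: apply Proposition \ref{thm:joint-convergence} with $\mathcal{G}=\sigma(\omega(x):\,x\in\mathbb{T})$, taking $\eta=2$ so that \eqref{eq:moments-bound-average} is precisely the first estimate of Lemma \ref{thm:key-independent-phase-and-radii}, and using the preceding lemma to identify $\lim_n \frac{1}{2n}\ln W_n(\xi)=G(\amin)$. Your version simply makes explicit the verification of the two hypotheses that the paper leaves implicit.
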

%%%%%%%%%%%%%%%%%%%%%%%%%%%%%%%%%%%%%%%%%%%%%%%%%%%%%%%%%%%%%%%
%%%%%%%%%%%%%%%%%%%%%%%%%%%%%%%%%%%%%%%%%%%%%%%%%%%%%%%%%%%%%%%
\begin{proof}
	Thanks to the previous lemma and the first estimate in Lemma \ref{thm:key-independent-phase-and-radii}, the result follows by an application of Proposition \ref{thm:joint-convergence} with $\mathcal{G}=\sigma(\omega(x):\, x\in\mathbb{T})$. 
\end{proof}
%%%%%%%%%%%%%%%%%%%%%%%%%%%%%%%%%%%%%%%%%%%%%%%%%%%%%%%%%%%%%%%
%%%%%%%%%%%%%%%%%%%%%%%%%%%%%%%%%%%%%%%%%%%%%%%%%%%%%%%%%%%%%%%

%%%%%%%%%%%%%%%%%%%%%%%%%%%%%%%%%%%%%%%%%%%%%%%%%%%%%%%%%%%%%%%
%%%%%%%%%%%%%%%%%%%%%%%%%%%%%%%%%%%%%%%%%%%%%%%%%%%%%%%%%%%%%%%
%%%%%%%%%%%%%%%%%%%%%%%%%%%%%%%%%%%%%%%%%%%%%%%%%%%%%%%%%%%%%%%
%%%%%%%%%%%%%%%%%%%%%%%%%%%%%%%%%%%%%%%%%%%%%%%%%%%%%%%%%%%%%%%
%%%%%%%%%%%%%%%%%%%%%%%%%%%%%%%%%%%%%%%%%%%%%%%%%%%%%%%%%%%%%%%
%%%%%%%%%%%%%%%%%%%%%%%%%%%%%%%%%%%%%%%%%%%%%%%%%%%%%%%%%%%%%%%
%%%%%%%%%%%%%%%%%%%%%%%%%%%%%%%%%%%%%%%%%%%%%%%%%%%%%%%%%%%%%%%
%%%%%%%%%%%%%%%%%%%%%%%%%%%%%%%%%%%%%%%%%%%%%%%%%%%%%%%%%%%%%%%
\section{The region $\mathcal{R}3$.}
\label{sec:R3}
%%%%%%%%%%%%%%%%%%%%%%%%%%%%%%%%%%%%%%%%%%%%%%%%%%%%%%%%%%%%%%%
%%%%%%%%%%%%%%%%%%%%%%%%%%%%%%%%%%%%%%%%%%%%%%%%%%%%%%%%%%%%%%%
%%%%%%%%%%%%%%%%%%%%%%%%%%%%%%%%%%%%%%%%%%%%%%%%%%%%%%%%%%%%%%%
%%%%%%%%%%%%%%%%%%%%%%%%%%%%%%%%%%%%%%%%%%%%%%%%%%%%%%%%%%%%%%%
The next lemma finishes the proof of Theorem \ref{thm:main}.
%%%%%%%%%%%%%%%%%%%%%%%%%%%%%%%%%%%%%%%%%%%%%%%%%%%%%%%%%%%%%%%
%%%%%%%%%%%%%%%%%%%%%%%%%%%%%%%%%%%%%%%%%%%%%%%%%%%%%%%%%%%%%%%
\begin{lemma}
\label{thm:convergence-R3}
	Under the hypothesis of Theorem \ref{thm:main}, it holds that
	\begin{eqnarray*}
		\lim_{n\to\infty} \frac{1}{n} \ln |\pf_n(\xi)|
		=
		\frac{1}{2}\ln 
		\left( b\esp \left[ \left| \xi \right|^2 \right]\right)
		=G(2),
	\end{eqnarray*}
	in probability, in the region $\mathcal{R}3$. Under the $\tau$-condition, the convergence holds $\p$-almost surely.
\end{lemma}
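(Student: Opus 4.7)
The plan is to apply Proposition \ref{thm:joint-convergence} with the trivial $\sigma$-algebra $\mathcal{G} = \{\emptyset, \Omega\}$, so that $W_n(\xi) = \esp[|\pf_n(\xi)|^2]$ is deterministic. Two ingredients are required: (i) almost sure convergence of $\frac{1}{2n}\ln W_n(\xi)$ to $G(2)$, and (ii) the uniform moment bound \eqref{eq:moments-bound-average} for some $\eta > 1$.

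For step (i), I would invoke the lemma in Section \ref{sec:basic-recursions} computing $\lim_n \frac{1}{n}\ln \esp[|\pf_n(\xi)|^2]$. Recall that in region $\mathcal{R}3$, the defining inequality $G(2) > \ln(b|\esp[\xi]|)$ reads
\begin{eqnarray*}
	\tfrac{1}{2}\ln\left(b\,\esp\left[|\xi|^2\right]\right) > \ln\left(b|\esp[\xi]|\right),
\end{eqnarray*}
which is equivalent to $\esp[|\xi|^2] > b|\esp[\xi]|^2$. The second alternative of the aforementioned lemma therefore applies and gives
\begin{eqnarray*}
	\lim_{n\to\infty} \tfrac{1}{2n}\ln \esp\left[|\pf_n(\xi)|^2\right] = \tfrac{1}{2}\ln\left(b\,\esp\left[|\xi|^2\right]\right) = G(2),
\end{eqnarray*}
which is a deterministic (hence almost sure) statement.

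For step (ii), since we are in $\mathcal{R}3$ we have $\amin > 2$, so the third estimate in Lemma \ref{thm:moment-estimates} produces some $\nu > 1$ with
\begin{eqnarray*}
	\sup_{n\geq 1}\esp\left[|\martx_n(\xi)|^{\nu}\right] = \sup_{n\geq 1}\frac{\esp\left[|\pf_n(\xi)|^{2\nu}\right]}{\esp\left[|\pf_n(\xi)|^2\right]^{\nu}} < \infty.
\end{eqnarray*}
Taking $\eta = \nu$ and noting that $W_n = \esp[|\pf_n|^2]$ is deterministic, this is precisely the bound \eqref{eq:moments-bound-average} (with conditional expectation given the trivial $\sigma$-algebra on the left-hand side). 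Proposition \ref{thm:joint-convergence} then yields convergence in probability of $\frac{1}{n}\ln|\pf_n(\xi)|$ to $G(2)$, and, assuming the $\tau$-condition, upgrades it to almost sure convergence via the lower-tail lemma \ref{thm:as-lower-bound}.

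No step here is really the hard part; all the analytical work has been done upstream in Sections \ref{sec:general-estimates} and \ref{sec:generic}. The only subtle point is to verify that $\mathcal{R}3$ satisfies the two hypotheses simultaneously: the $\mathbb{L}^{2+\varepsilon}$ moment bound needs $\amin > 2$ (used for Lemma \ref{thm:moment-estimates} (iii)), while the computation of the limit of $\frac{1}{n}\ln\esp[|\pf_n|^2]$ requires $\esp[|\xi|^2] > b|\esp[\xi]|^2$, which follows from $G(2) > \ln(b|\esp[\xi]|)$. Both are precisely the defining conditions of $\mathcal{R}3$, so the scheme closes.
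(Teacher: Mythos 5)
Your proposal is correct and follows essentially the same path as the paper's own proof: apply Proposition \ref{thm:joint-convergence} with $\mathcal{G}=\{\emptyset,\Omega\}$, using the third estimate of Lemma \ref{thm:moment-estimates} for the $L^{\nu}$ bound on $\martx_n$ and the second-moment computation to identify $\lim_n\frac{1}{2n}\ln\esp[|\pf_n(\xi)|^2]=G(2)$. Your derivation of the limit of $\frac{1}{2n}\ln\esp[|\pf_n(\xi)|^2]$ from the lemma at the end of Section \ref{sec:basic-recursions}, via the observation that the defining inequality of $\mathcal{R}3$ is equivalent to $\esp[|\xi|^2]>b|\esp[\xi]|^2$, is in fact slightly more explicit than the paper's cross-reference and correctly pinpoints the source.
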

%%%%%%%%%%%%%%%%%%%%%%%%%%%%%%%%%%%%%%%%%%%%%%%%%%%%%%%%%%%%%%%
%%%%%%%%%%%%%%%%%%%%%%%%%%%%%%%%%%%%%%%%%%%%%%%%%%%%%%%%%%%%%%%
\begin{proof}
	Recall that
	\begin{eqnarray*}
		\martx_n(\xi)
		=
		\frac{|\pf_n(\xi)|^2}{\esp \left[ \left| \pf_n(\xi) \right|^2 \right]},
	\end{eqnarray*}
	and recall that there exists $\nu>1$ such that
	\begin{eqnarray*}
		\sup_{n \ge 1}
		\esp\left[ \martx_n(\xi)^{\nu} \right] < \infty.
	\end{eqnarray*}
	In Section \ref{sec:comparison}, we already noted that
	\begin{align*}
		\lim_{n\to\infty}\frac{1}{2n}\ln \esp \left[ \left| \pf_n(\xi) \right|^2 \right]
		=
		G(2).
	\end{align*}
	The result then follows by an application of Proposition \ref{thm:joint-convergence} with $\mathcal{G}=\{\emptyset,\Omega\}$.
\end{proof}
%%%%%%%%%%%%%%%%%%%%%%%%%%%%%%%%%%%%%%%%%%%%%%%%%%%%%%%%%%%%%%%
%%%%%%%%%%%%%%%%%%%%%%%%%%%%%%%%%%%%%%%%%%%%%%%%%%%%%%%%%%%%%%%

\section{Independent phases and radii}
\label{sec:independent}
%%%%%%%%%%%%%%%%%%%%%%%%%%%%%%%%%%%%%%%%%%%%%%%%%%%%%%%%%%%%%%%
%%%%%%%%%%%%%%%%%%%%%%%%%%%%%%%%%%%%%%%%%%%%%%%%%%%%%%%%%%%%%%%
%%%%%%%%%%%%%%%%%%%%%%%%%%%%%%%%%%%%%%%%%%%%%%%%%%%%%%%%%%%%%%%
%%%%%%%%%%%%%%%%%%%%%%%%%%%%%%%%%%%%%%%%%%%%%%%%%%%%%%%%%%%%%%%
%%%%%%%%%%%%%%%%%%%%%%%%%%%%%%%%%%%%%%%%%%%%%%%%%%%%%%%%%%%%%%%
In this section, we will analyse how the three regions described in Theorem \ref{thm:main} are characterized in the case of independent phases and radii. The discussion below contains the proof of Corollary \ref{cor:indep}.

Recall that
\begin{align*}
	\xi_{ \beta, \gamma } = e^{\beta \omega + \iota \gamma \theta},
	\qquad
	\lambda_\R (\beta)
	= \ln \esp \left[ e^{\beta \omega}  \right],
	\qquad
	\lambda_\C (\gamma) 
	= -\ln \left| \esp \left[ e^{\iota \gamma \theta} \right] \right|.
\end{align*}
Hence, the function $G$ is given by
\begin{equation}
	\label{derida g xi_br}
	G(\alpha)
	=
	G_\beta(\alpha)
	=
	\frac{1}{\alpha}
	\ln
	\left( b \esp \left[ \left| \xibg \right|^{\alpha} \right] \right)
	=
	\frac{\ln b + \lrad(\alpha \beta)}{\alpha},
\end{equation} 
and we have
\begin{equation}
	\label{derida g' xi_br}
	G'_\beta(\alpha)
	=
	\frac{ (\alpha \ \beta) \ \lambda_\R' (\alpha \ \beta) - \lambda_\R (\alpha \ \beta) - \ln b }{\alpha^2}. 
\end{equation} 
As a preliminary, we note that $\amin > 2$ is equivalent to $G_{\beta}'(2) < 0$, which is equivalent to
\begin{align*}
	2\beta \lambda_{\R}'(2\beta)-\lambda_{\R}(2\beta)< \ln b,
\end{align*}
i.e., $\beta< \beta_0$. Note that this corresponds to the $\mathbb{L}^2$-region for the model with $\gamma=0$.
In the same vein, the condition $\amin>1$ is equivalent to $G_{\beta}'(1)<0$ which can be rewritten as
\begin{align*}
	\beta\lrad'(\beta)-\lrad(\beta)<\ln b,
\end{align*}
which corresponds to the weak disorder region for the model with $\gamma=0$, i.e., $\beta<\beta_c$.

Finally, note that $\amin=\amin(\beta)$ satisfies
\begin{align}
	\label{gregorio 3x}
	\amin\beta\lrad'(\amin\beta)-\lrad(\amin\beta)=\ln b.
\end{align}
Hence, $\amin=\beta_c/\beta$. Furthermore,
\begin{align*}
	G_{\beta}(\amin)
	=
	\frac{\ln b + \lrad(\beta_c)}{\beta_c/\beta}
	=
	\frac{\beta_c\lrad'(\beta_c)}{\beta_c/\beta}
	=
	\beta\lrad'(\beta_c).
\end{align*}

The region $\mathcal{R}1$ is characterized by the condition that there exists $\alpha\in(1,2]$ such that $G_{\beta}(\alpha)<\ln(b|\esp[\xi_{\beta,\gamma}]|)$ i.e.
\begin{align}
	\label{eq:R1-indep}
	\frac{\ln b + \lrad(\alpha \beta)}{\alpha} < \ln b + \lrad(\beta)-\lph(\gamma)
	=
	G_{\beta}(1)-\lph(\gamma).
\end{align}
We split our discussion into three cases. If $1<\amin\leq 2$, then the above condition is equivalent to
\begin{align*}
	G_{\beta}(\amin) < \ln b + \lrad(\beta)-\lph(\gamma),
\end{align*}
i.e., \( \beta\lrad'(\beta_c)-\lrad(\beta)+\lph(\gamma)<\ln b. \)

We have seen above that $1<\amin\leq 2$ is equivalent to $\beta_0 \leq \beta < \beta_c$.

Next, assume that $\amin>2$ (which, in particular, implies that $\beta<\beta_0$). In this case, the function $\alpha\mapsto G_{\beta}(\alpha)$ is decreasing in the interval $(1,2]$ and Condition \eqref{eq:R1-indep} is therefore equivalent to $G_{\beta}(2)<G_{\beta}(1)-\lph(\gamma)$, i.e.,
\begin{align*}
	\frac{\ln b + \lrad(2\beta)}{2}<\ln b + \lrad(\beta) - \lph(\gamma),
\end{align*}
or, equivalently, \(\lrad(2\beta)-2\lrad(\beta)+2\lph(\gamma)<\ln b.\)

We are left with the possibility that $\amin\leq 1$, which, in particular, implies that $\beta \geq \beta_c$. In this case, the function $\alpha\mapsto G_{\beta}(\alpha)$ is increasing in the interval $(1,2]$ and $G_{\beta}(\alpha) \geq G(1)$ for all $(1,2]$. Hence, Condition \eqref{eq:R1-indep} cannot be satisfied.

Finally, note that, in the whole $\mathcal{R}1$ region, we have
\begin{align*}
	\free(\beta,\gamma) 
	= 
	\fone(\beta,\gamma)
	=
	\ln(b|\esp[\xi_{\beta,\gamma}]|)
	=
	\ln b + \lrad(\beta)-\lph(\gamma).
\end{align*}

The region $\mathcal{R}3$ is characterized by the condition that $\amin>2$ (so that $\beta>\beta_0$) and  $G_{\beta}(2)>\ln \left( b \left| \esp \left[\xi_{\beta,\gamma} \right] \right| \right)$, i.e.,
\begin{align*}
	G_{\beta}(2)
	=
	\frac{1}{2}\ln \left( b\esp \left[ \left| \xi_{\beta,\gamma} \right|^2 \right] \right)
	=
	\frac{1}{2}\left( \ln b + {\lrad(2\beta)} \right)
	>
	\ln b + \lrad(\beta)-\lph(\gamma).
\end{align*}
We can rewrite this as
\begin{align*}
	\lrad(2\beta)-2\lrad(\beta)+2\lph(\gamma)>\ln b.
\end{align*}
Furthermore,
\begin{align*}
	\free (\beta,\gamma)  = \fthree (\beta,\gamma)  = G_{\beta}(2) = \frac{1}{2}\left( \ln b + \lrad(2\beta)\right).
\end{align*}

Finally, the region $\mathcal{R}2$ has two parts. The first one is characterized by the condition $\amin<1$. We have seen above that this is equivalent to
\begin{align*}
	\beta\lrad'(\beta)-\lrad(\beta)>\ln b,
\end{align*}
i.e., $\beta>\beta_c$.

The second possibility is given by the conditions $1\leq \amin < 2$ (i.e., $\beta_0 \leq \beta < \beta_c$) and $G_{\beta}(\amin)>\ln(b|\esp[\xi_{\beta,\gamma}]|)$\, i.e.,
\begin{align*}
	\beta\lrad'(\beta_c)
	>
	\ln b + \lrad(\beta)-\lph(\gamma),
\end{align*}
which can rewrite as
\begin{align*}
	\beta\lrad'(\beta_c)-\lrad(\beta)+\lph(\gamma)>\ln b.
\end{align*}
In both cases,
\begin{align*}
	\free (\beta,\gamma) = \ftwo (\beta,\gamma) = G_{\beta}(\amin) = \beta \lrad'(\beta_c).
\end{align*}
The above discussion finishes the proof of the Corollary  \ref{cor:indep}.

\appendix

%%%%%%%%%%%%%%%%%%%%%%%%%%%%%%%%%%%%%%%%%%%%%%%%%%%%%%%%%%%%
%%%%%%%%%%%%%%%%%%%%%%%%%%%%%%%%%%%%%%%%%%%%%%%%%%%%%%%%%%%%
%%%%%%%%%%%%%%%%%%%%%%%%%%%%%%%%%%%%%%%%%%%%%%%%%%%%%%%%%%%%
%%%%%%%%%%%%%%%%%%%%%%%%%%%%%%%%%%%%%%%%%%%%%%%%%%%%%%%%%%%%
%%%%%%%%%%%%%%%%%%%%%%%%%%%%%%%%%%%%%%%%%%%%%%%%%%%%%%%%%%%%
%%%%%%%%%%%%%%%%%%%%%%%%%%%%%%%%%%%%%%%%%%%%%%%%%%%%%%%%%%%%
\section{The Paley-Zygmund inequality}
\label{app:PZ}
%%%%%%%%%%%%%%%%%%%%%%%%%%%%%%%%%%%%%%%%%%%%%%%%%%%%%%%%%%%%
%%%%%%%%%%%%%%%%%%%%%%%%%%%%%%%%%%%%%%%%%%%%%%%%%%%%%%%%%%%%
We provide the version of the Paley-Zigmund inequality used in Section \ref{sec:generic}.
%%%%%%%%%%%%%%%%%%%%%%%%%%%%%%%%%%%%%%%%%%%%%%%%%%%%%%%%%%%%
%%%%%%%%%%%%%%%%%%%%%%%%%%%%%%%%%%%%%%%%%%%%%%%%%%%%%%%%%%%%
\begin{lemma}
\label{thm:Paley-Zygmund}
Let $X$ be a non-negative random variable and suppose that there exists $\nu>1$ such that
\begin{align*}
	\frac{\esp[X^{\nu}]}{\esp[X]^{\nu}}\leq B,
\end{align*}
for some $B\in(0,\infty)$. Then,
\begin{align*}
	\p\left[
		X>\theta \esp[X]
	\right]
	\geq
	B^{-\frac{1}{\nu-1}}
	\left(
		1-\theta
	\right)^{\frac{\nu}{\nu-1}},
\end{align*}
for all $\theta\in(0,1)$.
\end{lemma}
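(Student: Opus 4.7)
The plan is to prove this via the standard splitting argument combined with H\"older's inequality, which is the natural generalization of the classical Paley--Zygmund argument (the case $\nu=2$ uses Cauchy--Schwarz).

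First I would split the expectation according to the value of $X$ relative to the threshold $\theta\esp[X]$:
\begin{align*}
	\esp[X]
	=
	\esp\left[ X\mathbf{1}_{\{X\leq \theta\esp[X]\}}\right]
	+
	\esp\left[ X\mathbf{1}_{\{X > \theta\esp[X]\}}\right]
	\leq
	\theta\esp[X]
	+
	\esp\left[ X\mathbf{1}_{\{X > \theta\esp[X]\}}\right],
\end{align*}
so that $(1-\theta)\esp[X] \leq \esp\left[ X\mathbf{1}_{\{X > \theta\esp[X]\}}\right]$.

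Next I would apply H\"older's inequality to the right-hand side with conjugate exponents $\nu$ and $\nu/(\nu-1)$, obtaining
\begin{align*}
	(1-\theta)\esp[X]
	\leq
	\esp[X^{\nu}]^{1/\nu}\,
	\p\left[ X > \theta\esp[X]\right]^{(\nu-1)/\nu}.
\end{align*}
Rearranging and raising to the power $\nu/(\nu-1)$ yields
\begin{align*}
	\p\left[ X > \theta\esp[X]\right]
	\geq
	(1-\theta)^{\nu/(\nu-1)}
	\left( \frac{\esp[X]^{\nu}}{\esp[X^{\nu}]}\right)^{1/(\nu-1)}
	\geq
	B^{-1/(\nu-1)}(1-\theta)^{\nu/(\nu-1)},
\end{align*}
where the last inequality uses the hypothesis $\esp[X^{\nu}]/\esp[X]^{\nu}\leq B$. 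This is exactly the claimed bound.

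There is no real obstacle here: the whole argument is a one-line application of H\"older after the trivial splitting, and the only subtle point to keep in mind is that we need $\esp[X]>0$ (otherwise $\esp[X^{\nu}]=0$ as well and the statement is either trivial or vacuous, since $X=0$ almost surely and the event on the left-hand side is empty while the right-hand side is finite --- so one should implicitly assume $\esp[X]>0$, which is the case in all applications in Section~\ref{sec:generic}).
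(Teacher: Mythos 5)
Your proof is correct and follows essentially the same route as the paper's: both derive $(1-\theta)\esp[X] \leq \esp[X^{\nu}]^{1/\nu}\,\p[X>\theta\esp[X]]^{(\nu-1)/\nu}$ via H\"older's inequality after restricting to the event $\{X>\theta\esp[X]\}$, and conclude by rearranging. The only cosmetic difference is that you split $\esp[X]$ into two pieces before applying H\"older to $X\mathbf{1}_{\{X>\theta\esp[X]\}}$, whereas the paper applies H\"older to $(X-\theta\esp[X])\mathbf{1}_{\{X>\theta\esp[X]\}}$ and then bounds that factor by $X$; both steps are equivalent.
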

%%%%%%%%%%%%%%%%%%%%%%%%%%%%%%%%%%%%%%%%%%%%%%%%%%%%%%%%%%%%
%%%%%%%%%%%%%%%%%%%%%%%%%%%%%%%%%%%%%%%%%%%%%%%%%%%%%%%%%%%%
\begin{proof}
	By H\"older's inequality,
	\begin{align*}
		(1-\theta)\esp[X]
		&=
		\esp\left[X-\theta\esp[X]\right]
		\leq
		\esp\left[(X-\theta\esp[X]){\bf 1}_{X>\theta\esp[X]}\right]
		\\
		&\leq
		\esp\left[(X-\theta\esp[X])^{\nu}{\bf 1}_{X>\theta\esp[X]}\right]^{\frac{1}{\nu}}
		\p\left[
			X>\theta\esp[X]
		\right]^{\frac{\nu-1}{\nu}}
		\\
		&\leq
		\esp[X^{\nu}]^{\frac{1}{\nu}}
		\p\left[
			X>\theta\esp[X]
		\right]^{\frac{\nu-1}{\nu}}.
	\end{align*}
	Hence,
	\begin{align*}
		\p\left[
			X>\theta\esp[X]
		\right]^{\frac{\nu-1}{\nu}}
		&\geq
		(1-\theta)
		\frac{\esp[X]}{\esp[X^{\nu}]^{\frac{1}{\nu}}}
		\geq
		(1-\theta)B^{-\frac{1}{\nu}}. \qedhere
	\end{align*}
\end{proof}
%%%%%%%%%%%%%%%%%%%%%%%%%%%%%%%%%%%%%%%%%%%%%%%%%%%%%%%%%%%%
%%%%%%%%%%%%%%%%%%%%%%%%%%%%%%%%%%%%%%%%%%%%%%%%%%%%%%%%%%%%

%%%%%%%%%%%%%%%%%%%%%%%%%%%%%%%%%%%%%%%%%%%%%%%%%%%%%%%%%%%%%%%
%%%%%%%%%%%%%%%%%%%%%%%%%%%%%%%%%%%%%%%%%%%%%%%%%%%%%%%%%%%%%%%
%%%%%%%%%%%%%%%%%%%%%%%%%%%%%%%%%%%%%%%%%%%%%%%%%%%%%%%%%%%%%%%
%%%%%%%%%%%%%%%%%%%%%%%%%%%%%%%%%%%%%%%%%%%%%%%%%%%%%%%%%%%%%%%
%%%%%%%%%%%%%%%%%%%%%%%%%%%%%%%%%%%%%%%%%%%%%%%%%%%%%%%%%%%%%%%
%%%%%%%%%%%%%%%%%%%%%%%%%%%%%%%%%%%%%%%%%%%%%%%%%%%%%%%%%%%%%%%
%%%%%%%%%%%%%%%%%%%%%%%%%%%%%%%%%%%%%%%%%%%%%%%%%%%%%%%%%%%%%%%
%%%%%%%%%%%%%%%%%%%%%%%%%%%%%%%%%%%%%%%%%%%%%%%%%%%%%%%%%%%%%%%

%%%%%%%%%%%%%%%%%%%%%%%%%%%%%%%%%%%%%%%%%%%%%%%%%%%%%%%%%%%%%%%
%%%%%%%%%%%%%%%%%%%%%%%%%%%%%%%%%%%%%%%%%%%%%%%%%%%%%%%%%%%%%%%
%%%%%%%%%%%%%%%%%%%%%%%%%%%%%%%%%%%%%%%%%%%%%%%%%%%%%%%%%%%%%%%
%%%%%%%%%%%%%%%%%%%%%%%%%%%%%%%%%%%%%%%%%%%%%%%%%%%%%%%%%%%%%%%
%%%%%%%%%%%%%%%%%%%%%%%%%%%%%%%%%%%%%%%%%%%%%%%%%%%%%%%%%%%%%%%
%%%%%%%%%%%%%%%%%%%%%%%%%%%%%%%%%%%%%%%%%%%%%%%%%%%%%%%%%%%%%%%
%%%%%%%%%%%%%%%%%%%%%%%%%%%%%%%%%%%%%%%%%%%%%%%%%%%%%%%%%%%%%%%
%%%%%%%%%%%%%%%%%%%%%%%%%%%%%%%%%%%%%%%%%%%%%%%%%%%%%%%%%%%%%%%

\end{document}